\title{The OSSS Method in Percolation Theory}
\author{Julian Kern}
\begin{document}

\maketitle

\begin{abstract}
In 2017, Duminil-Copin \emph{et al.}~introduced the OSSS method to study properties of diverse percolation models. This document aims to introduce the reader to this new method. It contains a introduction to percolation theory, then concentrates on the case of Poisson-Boolean percolation. The majority of this document is dedicated to an detailed analysis of \cite{DRT18}. This work is the result of an internship in Summer 2019 with Jean-Baptiste Gouéré at the University of Tours.
\end{abstract}

\tableofcontents

\newpage

\section{Introduction}

Percolation Theory is a one of those branches of mathematics in which very easily stated questions turn out to be extremely difficult to answer. The first percolation model was introduced 1959 in \cite{BH59} by Broadbent and Hammersley to understand the dispersion of water in porous material. The main question can be formulated as follows: "Given a porous stone, will the water penetrate to its centre or will it enter the stone only superficially?" To turn this problem into a mathematical model, let us simplify it. On the microscopic level we may assume that we can decide at each point if water can flow or if it cannot. On this microscopic scale, the stone is nearly infinitely large and we may model it via the lattice on $\mathbb{Z}^3$. We then define the edges in the lattice to be \emph{open} or \emph{closed} when water can or cannot flow at the given point respectively. A first attempt will be to declare the edges to be open with probabilities $p$ and closed with probability $1-p$ (identically and independently). In this context where the stone is taken to be infinitely large in proportion to the size of one edge, the above question reduces to: "Is there an open path which connects the origin to infinity?" It is easy to show that there is a probability threshold $p_c$ which separates the two phases of \emph{non percolation} and \emph{percolation}, i.e.~a phase in which the origin is not connected to infinity almost surely and a phase in which it is connected to infinity with positive probability. However, it turns out to be very difficult to understand the \emph{transition} between these two phases. The study of the different phases and this \emph{phase transition} which will be the main objective of Percolation Theory. The first major advances have been made in the '80s. Today this first \emph{Bernoulli} model is understood quite well, but even now some questions remain unanswered and are subject to current research. During the last sixty years, new models have been developed, amongst which are the discrete \emph{Gibb's model} and the continuous \emph{Poisson-Boolean model} which both add long range dependencies to the model.

During my internship, I studied a new method which allows to deduce (relatively) easily many properties of the phase transition at once. This so-called \emph{OSSS method} has been introduced in \cite{DRT17a} by Duminil-Copin, Raoufi and Tassion to study different discrete models including Gibb's model. It heavily relies on the OSSS inequality first proved in \cite{OSSS05} and appeared to be robust with respect to changes in the model. In the last two years this has been confirmed, as the OSSS method has been applied to a large class of percolation models including Voronoi, Poisson-Boolean and confetti percolation (see \cite{DRT17b, DRT18, GR18, M18}). I focussed on the Bernoulli model and the Poisson-Boolean model discussed in \cite{DRT17a, DRT18} to understand how the OSSS method applies to discrete and continuous models. In this document, I would like to describe these two applications in more detail. In particular, the following is meant to be an introduction to Bernoulli and Poisson-Boolean percolation at which end the OSSS method is applied. Nevertheless, this is not meant to be an overview of the historical evolution of percolation which can be found in \cite{D17a}. Also, I will only introduce the notions of percolation which are needed for the OSSS inequality. For a more complete theory, I refer to \cite{G99} for the discrete models and to \cite{MR96} for the Poisson-Boolean model. A very good starting point for percolation theory is given by the lecture notes of Duminil-Copin which can be found on his webpage.\\

The main part of the document is divided into two parts. First, I will discuss the Bernoulli (bond) percolation on $\mathbb{Z}^d$ as an introduction to percolation theory and the OSSS method. The second section is devoted to the Poisson-Boolean model. Since it heavily relies on the concept of Poisson point processes, I will give a quick introduction at the beginning of the second section.

\section{Bernoulli Percolation on $\mathbb{Z}^d$}

As mentioned before, I will only discuss bond percolation. The most apparent reason for this is that I did not extensively study the related model of \emph{site percolation}. This is partly due to the fact that it is less present in current literature and also to the fact that the two models have very similar properties. Let us start with some visualisation. The following represents the Bernoulli model on $\mathbb{Z}^2$ introduced informally in the introduction for different densities of open edges. More precisely, the images show a $40\times 40$-section of the lattice on $\mathbb{Z}^2$. For every edge, we either draw it with probability $p$ or we don't (with probability $1-p$), the drawings being independent from edge to edge.

\begin{figure}[H]
\label{p25}
\centering
\includegraphics[width=.4\textheight]{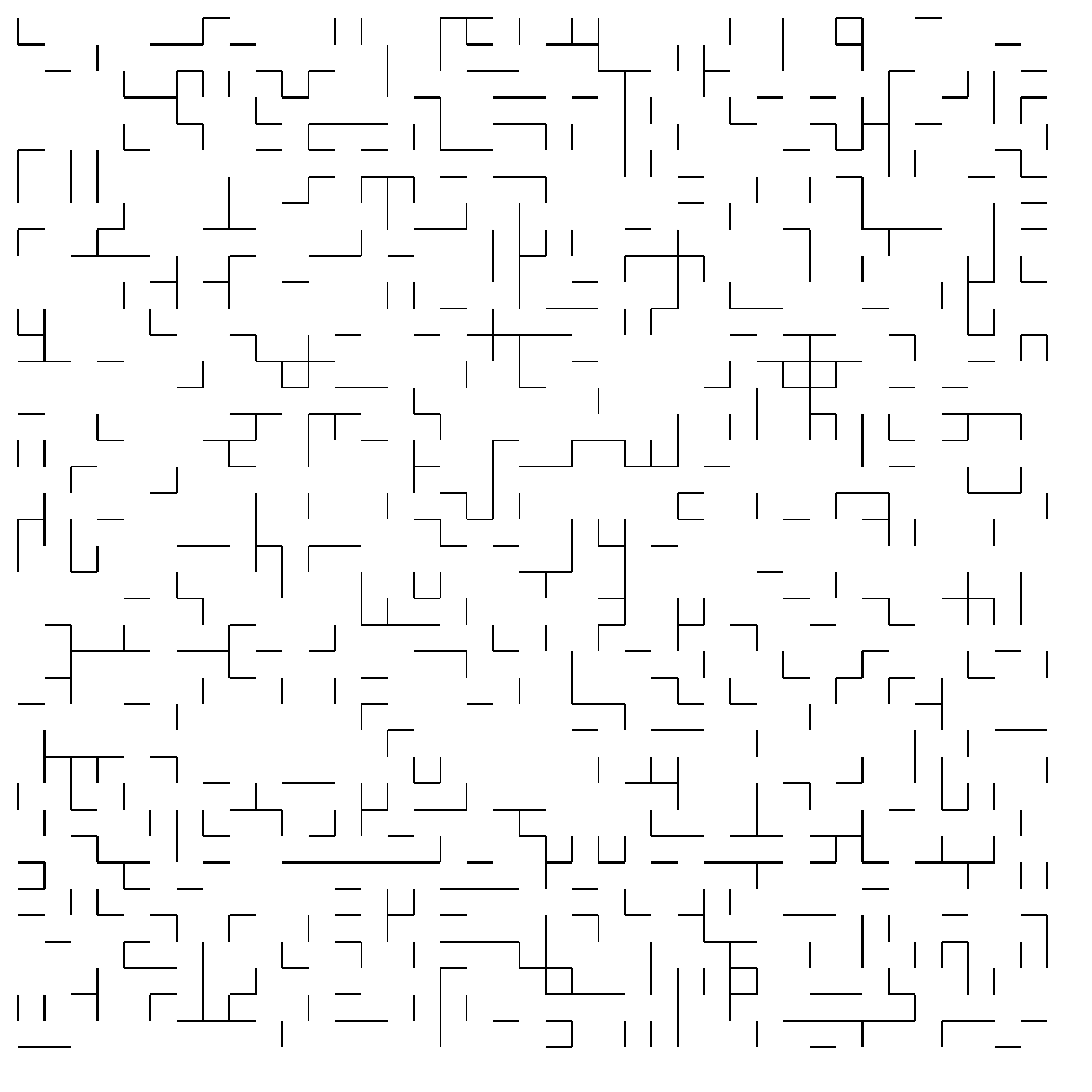}
\caption{$p = 0.25$}
\end{figure}

\begin{figure}[H]
\label{p45}
\centering
\includegraphics[width=.4\textheight]{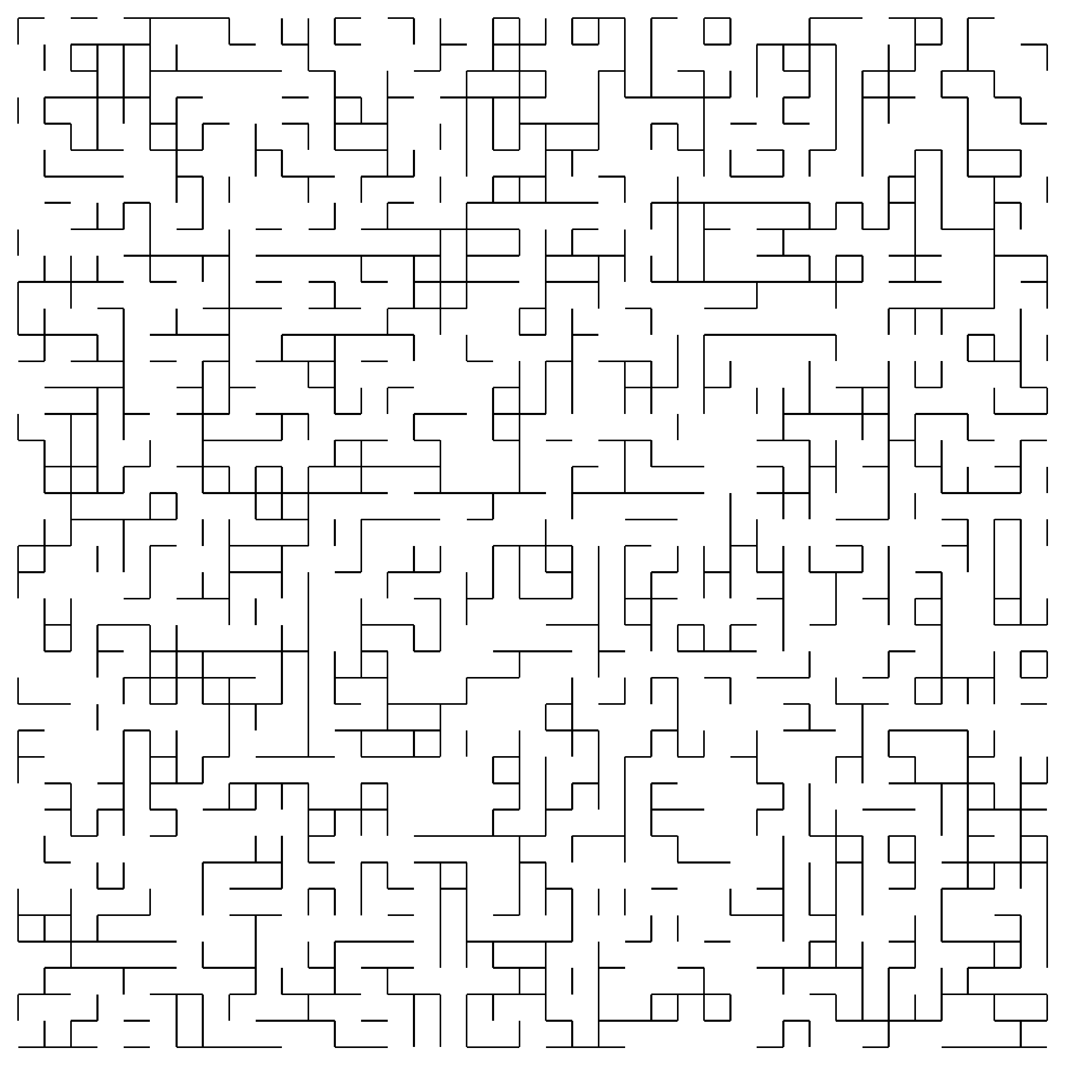}
\caption{$p = 0.45$}
\end{figure}

\begin{figure}[H]
\label{p55}
\centering
\includegraphics[width=.4\textheight]{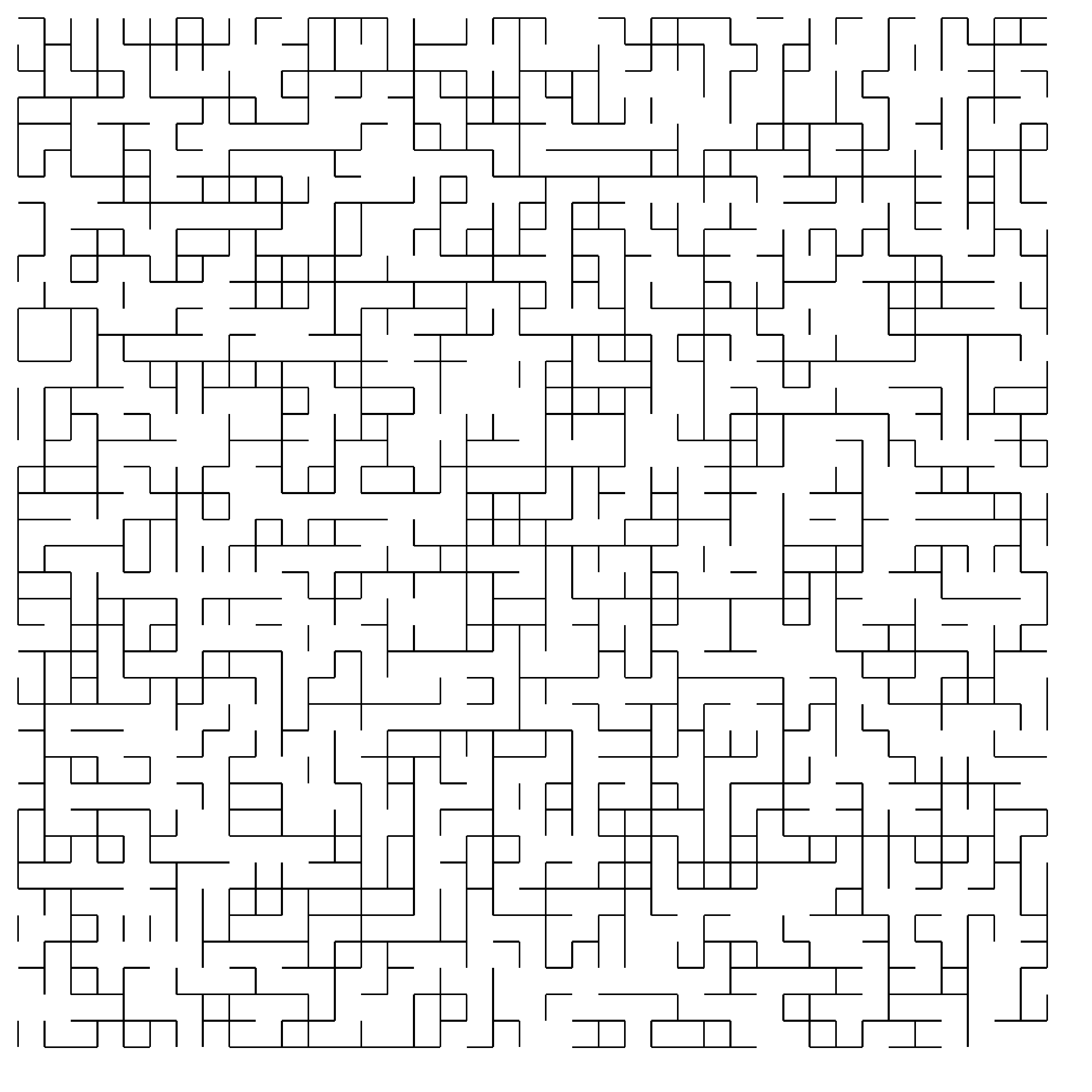}
\caption{$p = 0.55$}
\end{figure}

\begin{figure}[H]
\label{p75}
\centering
\includegraphics[width=.4\textheight]{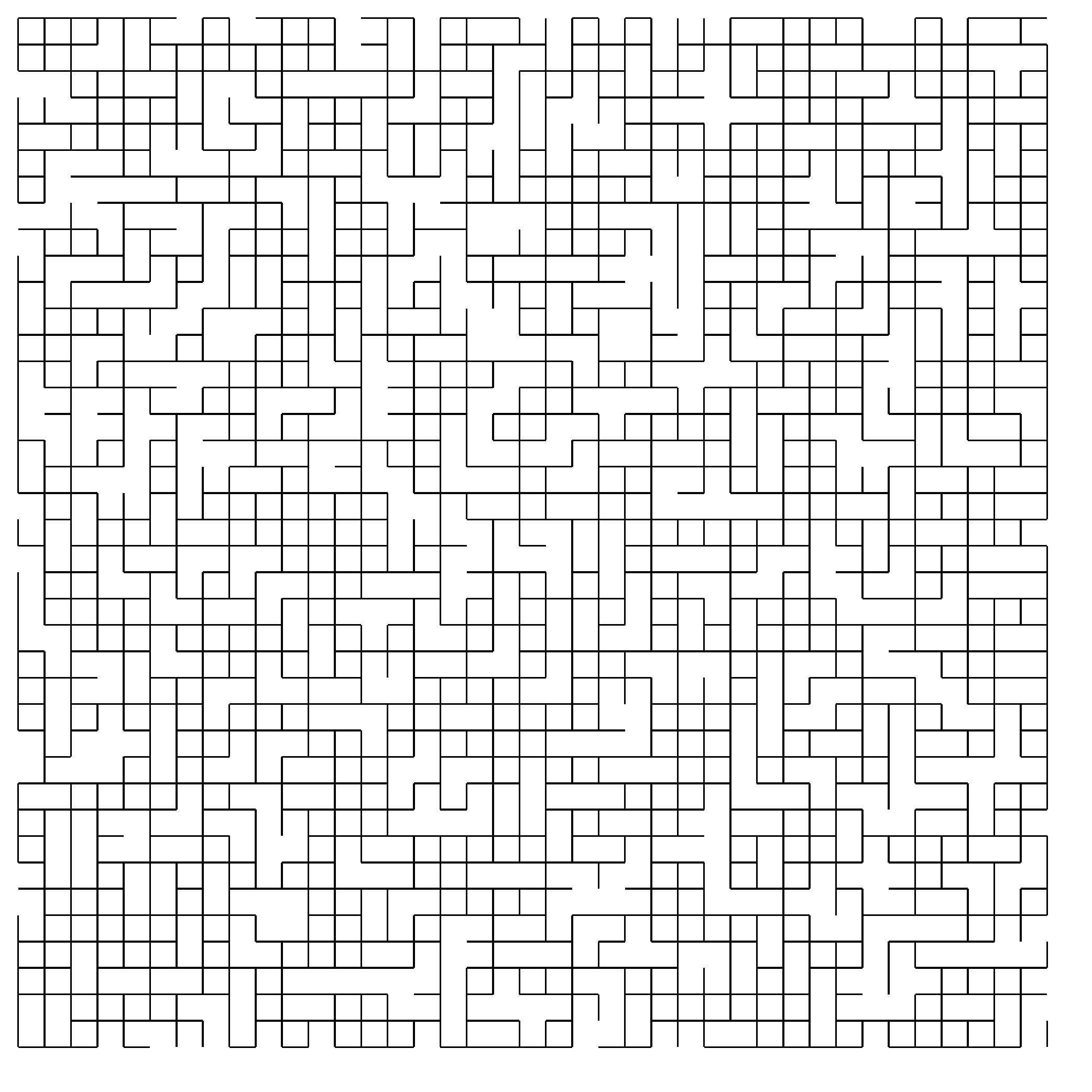}
\caption{$p = 0.75$}
\end{figure}

It is easy to see that we do not have big connected components at $p = 0.25$, also called "clusters" in this context, whereas there seems to be only one gigantic cluster at $p = 0.75$. The two other images are less clear, but if one makes the effort of looking more closely, one would see that there still is no big cluster at $p = 0.45$, but there is already only one cluster covering the entire section at $p = 0.55$. In other words, there seems to be a transition at around $p = 0.5$. Indeed, it is possible to prove that the critical density in the two-dimensional case is exactly $p_c = \frac{1}{2}$, but this issue will not concern me here. I am more interested in showing that this transition is "abrupt".

\subsection{The Model: Definitions, Notations and Basic Properties}

Consider the lattice $\mathbb{L}$ with vertices $\mathbb{Z}^d$ and with edges $E$ connecting two neighbouring points, i.e.~ for $x,y\in\mathbb{Z}^d$, we have $\{x,y\}\in E$ if and only if $\Vert x-y\Vert_1 = 1$. Let us write $\mu_p$ for the Bernoulli measure with parameter $p\in[0,1]$. We will work on the probability space given by the total space $\Omega := \{0,1\}^E$, the Borel $\sigma$-algebra $\mathcal{F}$ with respect to the product topology and the product measure $\mathbb{P}_p := \bigotimes_{e\in E}\mu_p$. 

Choosing an element $\omega\in\Omega$ of our measure space, a so-called \emph{configuration}, is nothing else than attributing to every edge $e\in E$ one of the values $0$ or $1$. To be more comfortable with this notion, let us translate this in terms of percolation: Given a configuration $\omega\in \Omega$, we will say that the edge $e\in E$ is \emph{open} if $\omega_e = 1$ and \emph{closed} if $\omega_e = 0$. This defines the random subgraph of $\mathbb{L}$ which only contains the open edges $\{e\in E\;\vert\; \omega_e = 1\}$. This subgraph corresponds to the graphs drawn above, i.e.~in which we only draw the open edges and forget the others. The connected components of this graph are called \emph{open clusters}. For $x,y\in \mathbb{Z}^d$, we say that \emph{$x$ is connected to $y$} (or: \emph{$x$ and $y$ are connected}), in symbols $x\leftrightarrow y$, if $x$ and $y$ are in the same open cluster, that is if there exists a path from $x$ to $y$ consisting of open edges only. If $x\leftrightarrow y$ for some $y\in A\subseteq\mathbb{Z}^d$, we say that \emph{$x$ is connected to $A$}, in symbols: $x\leftrightarrow A$. Finally, we say that \emph{$x$ is connected to infinity}, in symbols: $x\leftrightarrow \infty$, if $x$ lies in an infinite open cluster. If we only consider the paths on a subgraph induced on $A\subseteq\mathbb{Z}^d$, we replace $\leftrightarrow$ by $\overset{A}{\leftrightarrow}$.

Moreover, we will denote by $\Lambda_n := [-n,n]^d\cap\mathbb{Z}^d$ and $\Lambda_n(x) := x + \Lambda_n$ the boxes of size $n$. For a set $A\subseteq\mathbb{Z}^d$, we write $\partial A$ for the set of all vertices in $A$ that have a (direct) neighbour in $A^c$.\\

Let us pause a moment to get a feeling for the measure space we work in. The elements we look at are edge configurations, i.e.~subgraphs of $\mathbb{L}$. The $\sigma$-algebra of our space is generated by finite intersections of sets of the form $\{e\text{ is open}\} = \{\omega\in\Omega\;\vert\; \omega_e = 1\}\subseteq\Omega$. To be at ease with these notion, we will prove the measurability of the basic events, we defined above.

\begin{lem}
The sets $\{x\leftrightarrow y\}$, $\{x\leftrightarrow A\}$ and $\{x\leftrightarrow \infty\}$ are measurable for all $x,y\in\mathbb{Z}^d$ and $A\subseteq \mathbb{Z}^d$.
\end{lem}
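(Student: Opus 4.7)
The plan is to express each event as a countable combination of the generating cylinder sets $\{e \text{ is open}\}=\{\omega\in\Omega\mid\omega_e=1\}$, which are measurable by definition. Since $\mathbb{L}$ is a locally finite graph with countable edge set $E$, all countability issues will be easy to handle.

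First I would treat $\{x\leftrightarrow y\}$. By definition, this event holds if and only if there exists a finite self-avoiding path of open edges from $x$ to $y$. Writing $\mathcal{P}(x,y)$ for the set of finite self-avoiding paths $\gamma=(e_1,\dots,e_k)$ from $x$ to $y$ in $\mathbb{L}$, we have
\begin{equation*}
\{x\leftrightarrow y\}\;=\;\bigcup_{\gamma\in\mathcal{P}(x,y)}\bigcap_{e\in\gamma}\{e\text{ is open}\}.
\end{equation*}
Each inner intersection is finite, and $\mathcal{P}(x,y)$ is countable because it injects into the countable set of finite sequences of elements of $E$. Hence $\{x\leftrightarrow y\}\in\mathcal{F}$.

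Next, for $A\subseteq\mathbb{Z}^d$ (which is automatically countable), the event $\{x\leftrightarrow A\}$ is by definition $\bigcup_{y\in A}\{x\leftrightarrow y\}$, a countable union of measurable sets, hence measurable.

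Finally, for $\{x\leftrightarrow\infty\}$: $x$ lies in an infinite open cluster if and only if its open cluster is not contained in any box $\Lambda_n$, that is, if and only if for every $n\geq 1$ there is some vertex $y\notin\Lambda_n$ with $x\leftrightarrow y$. Equivalently,
\begin{equation*}
\{x\leftrightarrow\infty\}\;=\;\bigcap_{n\geq 1}\{x\leftrightarrow \mathbb{Z}^d\setminus\Lambda_n\},
\end{equation*}
which is a countable intersection of sets already shown to be measurable. There is no real obstacle here; the only point worth care is the equivalent reformulation of the infinite-cluster event as a countable intersection, which uses the local finiteness of $\mathbb{L}$ (so that an infinite cluster must contain arbitrarily far vertices).
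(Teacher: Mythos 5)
Your proposal is correct and follows essentially the same route as the paper: a countable union over finite open paths for $\{x\leftrightarrow y\}$, a countable union over $y\in A$ for $\{x\leftrightarrow A\}$, and a countable intersection over boxes for $\{x\leftrightarrow\infty\}$ (the paper uses $\bigcap_n\{x\leftrightarrow\partial\Lambda_n(x)\}$ rather than $\bigcap_n\{x\leftrightarrow\mathbb{Z}^d\setminus\Lambda_n\}$, an immaterial variation). No gaps.
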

\begin{proof}
The set $\{x\leftrightarrow y\}$ of all configuration for which $x$ and $y$ are connected is the set of all configuration in which there exists a (finite) path from $x$ to $y$ consisting of open edges only. In symbols, this gives
\[
\{x\leftrightarrow y\} = \bigcup_{(v_1,e_1,\dots, e_n, v_{n+1})\in \mathfrak{P}}\bigcap_{ i=1}^n \{ e_i\text{ is open }\},
\]
where $\mathfrak{P}$ denotes the set of all paths from $x$ to $y$ in $\mathbb{Z}^d$. Since the number of (finite) paths from $x$ to $y$ is countable and since the $\sigma$-algebra $\mathcal{F}$ contains all finite intersections of sets of the form $\{e\text{ is open}\}$, we conclude that $\{x\leftrightarrow y\}$ is measurable. Furthermore, the set $\{x\leftrightarrow A\}$ is a countable union of the former sets. Finally,
\[
\{x \leftrightarrow \infty\} = \bigcap_{n\in\mathbb{N}} \{x\leftrightarrow \partial\Lambda_n(x)\}
\]
is a countable intersection of measurable sets, and thus measurable itself.
\end{proof}

\begin{lem}
The number of infinite open clusters $N$ is a random variable.
\end{lem}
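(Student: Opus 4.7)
The plan is to reduce measurability of $N$ (viewed as a random variable with values in $\mathbb{N} \cup \{\infty\}$) to measurability of the events $\{N \geq k\}$ for every $k \in \mathbb{N}$. Once this is established, $\{N = k\} = \{N \geq k\} \setminus \{N \geq k+1\}$ and $\{N = \infty\} = \bigcap_k \{N \geq k\}$ are measurable, which suffices.

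Next, I would express $\{N \geq k\}$ in terms of events already shown to be measurable in the previous lemma. The key observation is that $N \geq k$ if and only if there exist $k$ distinct vertices $x_1, \ldots, x_k \in \mathbb{Z}^d$ such that each $x_i$ is connected to infinity and no two of them are connected to each other, the latter condition ensuring that they belong to $k$ different infinite clusters. Since any finite set of vertices is contained in $\Lambda_n$ for $n$ large enough, this can be rewritten as
\[
\{N \geq k\} = \bigcup_{n \in \mathbb{N}} \bigcup_{\{x_1, \ldots, x_k\} \subseteq \Lambda_n} \left( \bigcap_{i = 1}^k \{x_i \leftrightarrow \infty\} \cap \bigcap_{1 \leq i < j \leq k} \{x_i \leftrightarrow x_j\}^c \right).
\]
For each fixed $n$, the box $\Lambda_n$ is finite, so the inner union ranges over a finite collection of $k$-subsets; the outer union is countable; and the bracketed expression is a finite intersection of events that are either measurable by the preceding lemma or complements of such events. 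Measurability of $\{N \geq k\}$ follows.

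The only conceptual point requiring care is the set identity above, specifically the direction asserting that whenever there are at least $k$ infinite clusters one can find a single box $\Lambda_n$ containing a representative vertex from each. This is immediate from the fact that every infinite cluster is in particular non-empty, so it meets $\Lambda_n$ for all $n$ exceeding the minimal norm of its vertices; taking $n$ to be the maximum of these finitely many thresholds does the job. Beyond this observation, I do not expect any serious obstacle, the remainder being a bookkeeping exercise with countable set operations.
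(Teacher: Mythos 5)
Your argument is correct and essentially identical to the paper's: both reduce to the measurability of $\{N\geq k\}$ and express that event as a countable union, over $k$-subsets of $\mathbb{Z}^d$, of finite intersections of the events $\{x_i\leftrightarrow\infty\}$ and $\{x_i\not\leftrightarrow x_j\}$ from the preceding lemma. Your extra decomposition over boxes $\Lambda_n$ is harmless but unnecessary, since the collection of all $k$-subsets of $\mathbb{Z}^d$ is already countable.
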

\begin{proof}
It suffices to prove that the set $\{N\geq k\}$ is measurable for every $k\in\mathbb{N}$. Note that $N \geq k$ if and only if there are $k$ distinct vertices $x_1,\dots,x_k\in\mathbb{Z}^d$ which are all connected to infinity, but not to each other. Thus, we may write
\[
\{ N\geq k\} = \bigcup_{\{x_1,\dots,x_k\}\in \binom{\mathbb{Z}^d}{k}} \bigcap_{i=1}^k \left(\{ x_i \leftrightarrow \infty\} \cap \bigcap_{j\neq i}      \{x_i \not\leftrightarrow x_j\}\right),
\]
where $\binom{\mathbb{Z}^d}{k}$ is the set of all $k$-subsets of $\mathbb{Z}^d$. From the previous lemma, it follows that this set is measurable for every $k\in\mathbb{N}$.
\end{proof}

Now that we have assured ourselves that the basic events really are measurable, we need to have a closer look at the probability measure.

\begin{lem}
Let $x\in\mathbb{Z}^d$. For a configuration $\omega\in\Omega$, denote by $\tau_x(\omega)$ the translated configuration defined by
\[
\tau_x(\omega)(\{v,w\}) := \omega(\{v - x, w-x\}).
\]
For an event $A\in\mathcal{F}$, define the translated event by $\tau_xA := \{ \tau_x(\omega)\;\vert\;\omega\in A\}$. Then $\tau_xA$ is an event and 
\[
\mathbb{P}_p[\tau_xA] = \mathbb{P}_p[A],
\]
i.e. $\mathbb{P}_p$ is invariant under translations.
\end{lem}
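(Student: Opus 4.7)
The plan is to establish measurability of $\tau_x A$ first, and then to prove invariance by a standard $\pi$-$\lambda$ / uniqueness-of-extension argument.

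First I would observe that $\tau_x : \Omega \to \Omega$ is a bijection whose inverse is $\tau_{-x}$, so the direct image $\tau_x A$ coincides with the preimage $\tau_{-x}^{-1}(A)$. To show this set is measurable, it therefore suffices to verify that $\tau_{-x}$ is $(\mathcal{F},\mathcal{F})$-measurable, and for this one only needs to check preimages of a generating family for $\mathcal{F}$. Taking the cylinders $\{e\text{ is open}\}$ for $e \in E$, a direct computation using the definition of $\tau_{-x}$ gives $\tau_{-x}^{-1}(\{e \text{ is open}\}) = \{e+x \text{ is open}\}$, which is again a cylinder and hence in $\mathcal{F}$. This proves $\tau_x A \in \mathcal{F}$.

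For the invariance of the measure, I would introduce the pushforward $\mathbb{Q}_p[A] := \mathbb{P}_p[\tau_x A] = \mathbb{P}_p[\tau_{-x}^{-1}(A)]$, which is a probability measure on $(\Omega,\mathcal{F})$. The $\sigma$-algebra $\mathcal{F}$ is generated by the $\pi$-system $\mathcal{C}$ of finite intersections of cylinders $\{e \text{ is open}\}$, and by Dynkin's uniqueness theorem it is enough to show $\mathbb{Q}_p$ and $\mathbb{P}_p$ agree on $\mathcal{C}$. So I would take a typical element $C = \bigcap_{i=1}^n \{e_i \text{ is open}\}$ of $\mathcal{C}$ and use the computation of the previous paragraph to write $\tau_x C = \bigcap_{i=1}^n \{e_i + x \text{ is open}\}$. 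Since the factors $\mu_p$ in the product $\mathbb{P}_p = \bigotimes_{e\in E}\mu_p$ are identical Bernoulli measures, both sets have probability $p^n$, so $\mathbb{Q}_p$ and $\mathbb{P}_p$ agree on $\mathcal{C}$, and the lemma follows.

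I do not expect a serious obstacle here: the entire argument is an application of standard measure-theoretic machinery, and the only mildly delicate point is keeping track of the direction in which $\tau_x$ shifts (bijection versus preimage) so that the claim $\tau_x C = \bigcap_i \{e_i + x \text{ is open}\}$ is correctly derived from the definition given in the statement.
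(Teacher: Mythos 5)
Your proposal is correct and follows essentially the same route as the paper: reduce measurability to the action of the translation on single cylinders, and reduce invariance to agreement of the two measures on the $\pi$-system of finite intersections of cylinders. You are in fact slightly more careful than the paper in distinguishing the image $\tau_xA$ from the preimage $\tau_{-x}^{-1}(A)$ and in spelling out the Dynkin uniqueness step, which the paper leaves implicit.
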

\begin{proof}
For $e = \{v,w\}\in E$ define $\tau_x e := \{v-x,w-x\}\in E$. To prove the measurability, it suffices to prove that $\tau_{x}$ is measurable for every $x\in\mathbb{Z}^d$. For this, it is sufficient to note that \[
\tau_x(\{e\text{ is open}\}) = \{ \tau_xe\text{ is open}\}\in\mathcal{F}
\]
for all $e\in E$.

To show the invariance of $\mathbb{P}_p$, recall first that the cylinder sets are a generator of $\mathcal{F}$ which is stable under finite intersection. By independence of the different edges, we only need to note that
\[
\mathbb{P}_p[\tau_xe\text{ is open}] = p = \mathbb{P}_p[e\text{ is open}].
\]
\end{proof}

We say that an event $A\in\mathcal{F}$ is \emph{translation invariant} if $\tau_xA = A$ for all $x\in\mathbb{Z}^d$. Note that the event $\{N = k\}$ is translation invariant for every $k\in\mathbb{Z}^d$. Indeed,
\begin{align*}
\tau_x\{N\geq j\} &= \bigcup_{\{x_1,\dots,x_j\}\in\binom{\mathbb{Z}^d}{j}} \bigcap_{i=1}^j \left(\tau_x\{x_i\leftrightarrow \infty\} \cap\bigcap_{l\neq i} \tau_x\{x_i\not\leftrightarrow x_l\}\right)\\
&= \bigcup_{\{x_1,\dots,x_j\}\in\binom{\mathbb{Z}^d}{j}} \bigcap_{i=1}^j \left(\{x_i-x\leftrightarrow \infty\} \cap\bigcap_{l\neq i} \{x_i-x\not\leftrightarrow x_l-x\}\right)\\
&= \bigcup_{\{x_1-x,\dots,x_j-x\}\in\binom{\mathbb{Z}^d}{j}} \bigcap_{i=1}^j \left(\{x_i\leftrightarrow \infty\} \cap\bigcap_{l\neq i} \{x_i\not\leftrightarrow x_l\}\right)\\
&= \{N\geq j\}
\end{align*}
for all $j\in\mathbb{N}$, and thus
\[
\tau_x\{N = k\} = \tau_x\{N\geq k\} \setminus \tau_x\{N\geq k+1\} = \{N\geq k\}\setminus\{N\geq k+1\} = \{N = k\}.
\]

\begin{lem}
The probability measure $\mathbb{P}_p$ is ergodic, i.e.~for all translation invariant events $A\in\mathcal{F}$, one has $\mathbb{P}_p[A] \in\{0,1\}$.
\end{lem}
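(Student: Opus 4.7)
The plan is to establish ergodicity via the standard mixing-type argument: any translation-invariant event $A$ is approximated by a cylinder event, and since the measure is a product, the cylinder becomes independent of itself after a large enough translation, which forces $\mathbb{P}_p[A] = \mathbb{P}_p[A]^2$.

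First I would invoke a measure-theoretic approximation step: the algebra of cylinder events (those depending on only finitely many edges) generates $\mathcal{F}$, and is stable under finite intersection, so by a standard result (the approximation theorem for $\sigma$-algebras generated by algebras) every $A \in \mathcal{F}$ can be approximated in $\mathbb{P}_p$-measure by cylinders. Concretely, for every $\varepsilon > 0$ there exists a cylinder $B$, depending only on the edges in some finite set $F \subseteq E$, with $\mathbb{P}_p[A \triangle B] < \varepsilon$.

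Next, since $F$ is finite, I can pick a translation vector $x \in \mathbb{Z}^d$ with $\|x\|_1$ so large that the translated edge set $\tau_x F$ is disjoint from $F$. Then the cylinder $\tau_x B$ depends on edges disjoint from those defining $B$, and by independence of the coordinates under $\mathbb{P}_p$,
\[
\mathbb{P}_p[B \cap \tau_x B] = \mathbb{P}_p[B]\,\mathbb{P}_p[\tau_x B] = \mathbb{P}_p[B]^2,
\]
where in the last step I use the translation invariance of $\mathbb{P}_p$ proved in the previous lemma.

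Now I exploit the hypothesis $\tau_x A = A$. The symmetric-difference inequality gives
\[
\bigl|\mathbb{P}_p[A \cap \tau_x A] - \mathbb{P}_p[B \cap \tau_x B]\bigr| \leq \mathbb{P}_p[A \triangle B] + \mathbb{P}_p[\tau_x A \triangle \tau_x B] < 2\varepsilon,
\]
since $\mathbb{P}_p[\tau_x A \triangle \tau_x B] = \mathbb{P}_p[A \triangle B]$ by translation invariance of $\mathbb{P}_p$. Because $A = \tau_x A$, the left-hand set is simply $A$, hence $|\mathbb{P}_p[A] - \mathbb{P}_p[B]^2| < 2\varepsilon$. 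Combining with $|\mathbb{P}_p[A] - \mathbb{P}_p[B]| < \varepsilon$, one obtains $|\mathbb{P}_p[A] - \mathbb{P}_p[A]^2| < 4\varepsilon$. Letting $\varepsilon \to 0$ yields $\mathbb{P}_p[A] = \mathbb{P}_p[A]^2$, i.e.~$\mathbb{P}_p[A] \in \{0,1\}$.

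The only real obstacle is the cylinder approximation step, which is a standard fact of measure theory but deserves to be cited explicitly (e.g.~via Carathéodory's extension theorem or the monotone class theorem applied to the algebra of cylinder sets). Everything else is a short symmetric-difference computation together with the product structure of $\mathbb{P}_p$.
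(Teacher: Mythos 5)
Your proof is correct and follows exactly the same strategy as the paper's: approximate the invariant event by a cylinder, translate far enough to obtain independence, and deduce $\mathbb{P}_p[A]=\mathbb{P}_p[A]^2$. In fact you supply the $\varepsilon$-bookkeeping that the paper explicitly leaves to the reader, so nothing is missing.
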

\begin{proof}
Recall that we can approximate every event in $\mathcal{F}$ by events depending on a finite number of edges only. Let $(A_n)\subset\mathcal{F}$ be such an approximation. Since for every $n$, the event $A_n$ only depends on a finite number of edges, there exists $x_n\in\mathbb{Z}^d$ such that $\tau_{x_n}A_n$ is independent of $A_n$. By noting that $\tau_{x_n}A_n$ approximates $\tau_{x_n}A$, we get
\[
\mathbb{P}_p[A] = \mathbb{P}_p[A\cap \tau_{x_n}A] \approx \mathbb{P}_p[A_n\cap \tau_{x_n}A_n] = \mathbb{P}_p[A_n]\cdot\mathbb{P}_p[\tau_{x_n}A_n] = \mathbb{P}_p[A_n]^2 \approx \mathbb{P}_p[A]^2.
\]
The details in terms of some small $\epsilon$ are tedious and thus left to the reader. The equality $\mathbb{P}_p[A] = \mathbb{P}_p[A]^2$ then leads to the result.
\end{proof}

This means in particular that $N$ is almost surely constant. Hence, we need to ask, when $N\neq 0$ almost surely and which values $N$ can take. For this, we will introduce two important quantities in percolation theory. Let
\[
\theta_n(p) := \mathbb{P}_p[0\leftrightarrow \partial\Lambda_n]\quad\text{ and }\quad \theta(p) := \mathbb{P}_p[0\leftrightarrow \infty] = \lim_n \theta_n(p)
\]
be the probability that $0$ is connected to the box of size $n$ and the probability that $0$ lies in an infinite open cluster. First, note that $\theta(p) = 0$ implies $N = 0$: in fact, $N$ is nonzero if and only if there is at least one infinite open cluster, i.e.~if at least on vertex is connected to infinity. Thus, we may write
\[
\{N > 0\} = \bigcup_{x\in\mathbb{Z}^d} \{x\leftrightarrow \infty\},
\]
and hence
\[
\mathbb{P}_p[N > 0] \leq \sum_{x\in\mathbb{Z}^d} \mathbb{P}_p[x\leftrightarrow\infty] = \sum_{x\in\mathbb{Z}^d} \theta(p) = 0
\]
by translation invariance. Conversely, if $\theta(p) > 0$, then
\[
\mathbb{P}_p[N > 0] \geq \mathbb{P}_p[0\leftrightarrow\infty] = \theta(p) > 0.
\]
Hence $N > 0$ almost surely if and only if $\theta(p) > 0$. We thus are interested in the critical value, where $N$ changes from 0 to some positive value:
\[
p_c := p_c(d) := \sup \{p\in[0,1]\;\vert\; \theta(p) = 0\}.
\]

We would like to have the equivalence\[
\theta(p) > 0\qquad\text{ iff }\qquad p > p_c.
\]
For this to be true, we need $\theta$ to be increasing in $p$. This intuitive fact can be proven by using the concept of \emph{coupling}. A more detailed description of the method of coupling can be found in \cite[Section 1.3]{G99}. The following figure provides a \emph{sketch} for $\theta$. The exact form is not known for all dimensions and this figure only reflects the conjectured form.

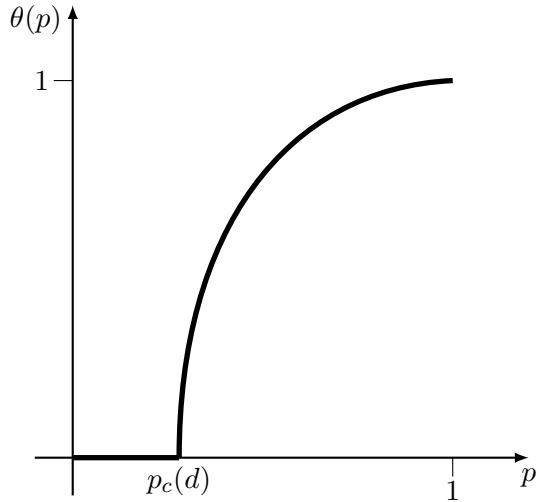
\begin{figure}[H]
\begin{center}
  \begin{tikzpicture}
    \node (1) at (1.4,-0.3) {$p_c(d)$};
    \coordinate (Origin)   at (0,0);
    \coordinate (XAxisMin) at (-0.5,0);
    \coordinate (XAxisMax) at (6,0);
    \coordinate (YAxisMin) at (0,-0.5);
    \coordinate (YAxisMax) at (0,6);
    \draw [thick, black,-latex] (XAxisMin) -- (XAxisMax)node[pos=1, below]{$p$};
    \draw [thick, black,-latex] (YAxisMin) -- (YAxisMax)node[pos=0.97, left]{$\theta (p)$};
    \draw[line width=2pt] (0, 0) .. controls(0.5,0) and (1,0) .. (1.4, 0);
  \draw[line width=2pt] (1.4, 0) .. controls(1.4,3) and (2.8,4.9) .. (5, 5);
  \draw (5,0)--(5,-0.25)node[pos=0.7, below]{1};
  \draw(0,5)--(-0.25,5)node[pos=0.7, left]{1};

  \end{tikzpicture}
  \end{center}

	\caption{Conjectured behaviour of $\theta$}
\end{figure}

One goal of percolation theory is to understand the behaviour of $\theta$, especially near the critical point. But before we get there, we first note that it can be shown that $N$ only takes the values $0$ or $1$. This means that the infinite open cluster is unique if it exists. A proof of this proposition can be found in \cite{BK89}.

In the study of the behaviour of $\theta$, one would like to show the transition from the subcritical phase to the supercritical phase to be sharp. In mathematical terms, this means that $\theta_n(p)$ converges very fast to zero for $p < p_c$. This would describe the left hand limit towards $p_c$. For a first step towards the understanding of the right side limit, we show that $\theta(p)$ grows not too slowly for $p > p_c$. Many other behaviours near the critical point are conjectured, and partially proven, but we will concentrate in this document on the first two problems. In particular, we will concentrate on a new method to show the sharpness of the transition. Before attacking this problem, we need to study two important tools in percolation theory. But first, we will have a look at the final result:

\begin{theo}
For all $p \in (0, p_c)$, there exists a constant $c_p > 0$ such that
\[
\theta_n(p) \in O_{n\to\infty}\left(e^{-c_pn}\right).
\]
Furthermore, there is a constant $c > 0$ such that
\[
\forall p > p_c,\qquad \theta(p) \geq c(p - p_c).
\]
In other words, the phase transition is sharp (exponential decay below the critical point) and $\theta$ grows at least linearly above the critical point.
\end{theo}

\subsection{The FKG Inequality and Russo's formula}

This part is dedicated to two very important concepts which are verified \emph{mutatis mutandis} in many different models. Here, we will only discuss the formulation necessary for the Bernoulli percolation model. Later on, we will see other formulations.\\

First, we will study the FKG inequality. Also known as positive correlation of the measure, it shows that a particular class of events are positively correlated. This gives some control over the intersection of dependent events. We will need some more notations to state the assertion.

On $\Omega$, we can define a partial order via $\omega \leq \omega'$ if and only if $\omega_e \leq \omega'_e$ for all $e\in E$. We then say that an event $A\in\mathcal{F}$ is \emph{increasing} if its indicator function is increasing (with respect to this partial order). In other words: if $\omega\in A$ and we add open edges, the new configuration $\omega'$ still belongs to $A$. Examples for increasing events are $\{x\leftrightarrow y\}$ or $\{N > 0\}$. We say that an event is \emph{decreasing} if its complement is increasing, i.e.~if its indicator function is decreasing. 

We are now able to state a version of the FKG inequality. It is possible to give a more general form, but the following version often suffices.

\begin{theo}[FKG inequality]
Let $f,g:\Omega \rightarrow \mathbb{R}$ be two measurable, bounded and increasing functions. Then
\[
\mathbb{E}_p[fg] \geq \mathbb{E}_p[f]\cdot\mathbb{E}_p[g].
\]
In particular, if $A,B\in\mathcal{F}$ are two increasing events, then
\[
\mathbb{P}_p[A \cap B]\geq \mathbb{P}_p[A]\cdot\mathbb{P}_p[B].
\]
The same is true for two decreasing functions or two decreasing events. If one is increasing and the other decreasing, the inequality is reversed.
\end{theo}
\begin{proof}
We follow the proof from \cite[Theorem 2.4]{G99}. Since the second statement is a particular case of the first one, it suffices to show the FKG inequality for expectations. In the following, we will use $e$ and $\omega_e$, i.e. an edge and its value, interchangeably to simplify the notations.

First assume that $f$ and $g$ only depend on a finite number of edges. We will prove the statement by induction. If $f$ and $g$ only depend on one edge, we have
\begin{align*}
\mathbb{E}_p[fg] - \mathbb{E}_p[f]\mathbb{E}_p[g] &= p(1-p)\Big{(}f(1)g(1) -f(0)g(1) - f(1)g(0)+ f(0)g(0)\Big{)}\\
&= p(1-p) \underbrace{\Big{(} f(1) - f(0)\Big{)}}_{\geq 0}\underbrace{\Big{(} g(1) - g(0)\Big{)}}_{\geq 0} \geq 0.
\end{align*}
Now, suppose that the assertion is true for all pairs of functions depending on $n-1$ edges for some $n\geq 2$ and consider functions $f$ and $g$ depending on $n$ edges $e_1,\dots e_n$. In particular, $\mathbb{E}_p[f\;\vert\; e_1,\dots,e_{n-1}]$ and $\mathbb{E}_p[g\;\vert\; e_1,\dots,e_{n-1}]$ depend on the $n-1$ edges $e_1,\dots, e_{n-1}$ only. Furthermore, the conditioning preserves the monotony of $f$ and $g$. This then gives
\begin{align*}
\mathbb{E}_p[fg] = \mathbb{E}_p\Big{[}\mathbb{E}_p[fg\;\vert\; e_1,\dots,e_{n-1}]\Big{]} &\geq  \mathbb{E}_p\Big{[} \mathbb{E}_p[f\;\vert\; e_1,\dots,e_{n-1}]\cdot\mathbb{E}_p[g\;\vert\; e_1,\dots,e_{n-1}]\Big{]} \\
&\geq \mathbb{E}_p\Big{[}\mathbb{E}_p[f\;\vert\; e_1,\dots,e_{n-1}]\Big{]}\cdot\mathbb{E}_p\Big{[}\mathbb{E}_p[g\;\vert\; e_1,\dots,e_{n-1}]\Big{]} \\
&= \mathbb{E}_p[f]\cdot\mathbb{E}_p[g],
\end{align*}
where we used in the first inequality that the conditional expectation can be seen as a finite sum of regular expectations on the events that the edges $e_1,\dots,e_{n-1}$ have been chosen. Hence, the base case applies. Then, the second inequality is due to the induction hypothesis.

Finally, consider two bounded increasing random variables $f$ ans $g$. Take an ordering $(e_n)_{n\geq 1}$ of the edge set $E$ and consider the two martingales defined by $f_n := \mathbb{E}_p[f\;\vert\; e_1,\dots,e_n]$ and $g_n :=  \mathbb{E}_p[g\;\vert\; e_1,\dots,e_n]$. Then $f_n$ and $g_n$ only depend on the edges $e_1,\dots,e_n$. We conclude that
\[
\mathbb{E}_p[f_ng_n]\geq \mathbb{E}_p[f_n]\cdot\mathbb{E}_p[g_n]
\]
for all $n\geq 1$. Finally, the martingale convergence theorem yields
\[
\mathbb{E}_p[fg] = \lim_n \mathbb{E}_p[f_ng_n] \geq \left(\lim_n \mathbb{E}_p[f_n]\right)\cdot\left(\lim_n \mathbb{E}_p[g_n]\right) = \mathbb{E}_p[f]\cdot\mathbb{E}_p[g].
\]
\end{proof}

I will give a short example to illustrate the intuition behind the FKG inequality.  Take the two events $\{v_1\leftrightarrow v_2\}$ and $\{w_1\leftrightarrow w_2\}$. The FKG inequality tells us that one event makes the other more probable:
\[
\mathbb{P}_p[v_1\leftrightarrow v_2\;\vert\; w_1\leftrightarrow w_2] \geq \mathbb{P}_p[v_1\leftrightarrow v_2]
\]
and vice versa. This is not surprising, because we already have many open edges on the event $\{w_1\leftrightarrow w_2\}$ which we can use to construct the path from $v_1$ to $v_2$.

Sometimes one would like to have the converse: on the event $A$ it is harder to obtain $B$. One way is to use the FKG inequality for an increasing and a decreasing event. But the FKG inequality tells us that this is impossible for two increasing events. To get around this problem, one adds an additional constraint: it is more difficult to obtain $A$ and $B$ in a disjoint way than to get $A$ and $B$ independently. This concept is known as the BK inequality. Since we do not need it subsequently, I will not go into the details what is meant by "in a disjoint way". For more details, I refer to \cite[Section 2.3]{G99}.\\

The second important concept concerns the characterization of $\theta_n$. This will help us to gather information about $\theta$ as limit function. The idea is that $\theta_n$ is a polynomial in $p$, which means that we can calculate the derivative explicitly. This will not be true any more for $\theta$ (which is not differentiable at the critical point). In the following, we will give two characterizations of $\theta_n'$. Eventually, we will only need the second one for the Bernoulli model. For other models however, we will use variants of the first formula.

Fix a configuration $\omega\in\Omega$. In the following, we will say that an edge $e$ is \emph{pivotal} for $A$ in $\omega$, if $\omega\in A$ and $\omega'\not\in A$, where $\omega'\in\Omega$ is the configuration equal to $\omega$ on all edges $f\neq e$ and such that $\omega'_e = 1 - \omega_e$. In other words, pivotal edges are crucial for an event to occur in a certain configuration. 

\begin{theo}[Russo's formula]
Let $A$ be an increasing event which depends on a finite number of edges only. Then
\[
\dfrac{d}{dp}\mathbb{P}_p[A] = \sum_{e\in E} \mathbb{P}_p[e\text{ is pivotal for }A].
\]
\end{theo}
\begin{proof}
We follow the general structure of the proof in \cite[Theorem 2.25]{G99}. First we couple the percolation models with respect to the parameters $p$ and $p+\delta$ with $\delta > 0$ small. (Since the event depends on a finite number of edges only, the function to consider is a polynomial. Hence, it suffices to prove the result for the right hand differential.) We do this as follows. Let $(U_e)_{e\in E}$ be an iid sequence of uniform random variables in $[0,1]$, and denote by $P$ the corresponding distribution. We define the two configurations $\omega$ and $\omega'$ via
\[
\omega(e) = 1_{U_e < p}\quad\text{ and }\quad \omega'(e) = 1_{U_e < p + \delta}.
\]
Then $\mathbb{P}_p[A] = P(\omega\in A)$ and $\mathbb{P}_{p+\delta}[A] = P(\omega'\in A)$. Suppose that $A$ depends on the edges $e_1,\dots,e_n$. Then, define the configurations $\omega_0,\dots,\omega_n\in\Omega$ inductively via $\omega_0 := \omega$,
\[
\forall f\neq e_{i+1},\quad\omega_{i+1}(f) = \omega_i(f)\quad\text{ and }\quad \omega_{i+1}(e_{i+1}) = 1_{U_{e_{i+1}} < p+\delta}
\]
for all $i = 0,\dots, n-1$. Since $A$ only depends on these $n$ edges, we obtain the equality $P(\omega'\in A) = P(\omega_n\in A)$. Since $A$ is increasing, this leads to
\begin{align*}
\mathbb{P}_{p+\delta}[A] - \mathbb{P}_p[A] &= P(\omega'\in A, \omega\not\in A)\\
&= \sum_{i=1}^n P(\omega_i\in A, \omega_{i-1}\not\in A)\\
&= \sum_{i=1}^n P(\omega_i(e_i) = 1, \omega_{i-1}(e_i) = 0, e_i\text{ is pivotal for }(A,\omega_i))\\
&= \sum_{i=1}^n P(U_{e_i} \in [p, p+\delta), e_i\text{ is pivotal for }(A,\omega_i)).
\end{align*}
Note that the event $\{e_i\text{ is pivotal for }(A,\omega_i)\}$ only depends on the edges $E\setminus\{e_i\}$. Hence,
\[
\mathbb{P}_{p+\delta}[A] - \mathbb{P}_p[A] = \delta\sum_{i=1}^n P(e_i\text{ is pivotal for }(A,\omega_i)).
\]
To finish the proof, recall that $A$ only depends on a finite number of edges. This means that the probabilities on the right hand side are polynomials in $p$ and $p+\delta$, giving
\[
P(e_i\text{ is pivotal for }(A,\omega_i)) \underset{\delta\downarrow 0}{\longrightarrow} \mathbb{P}_p[e_i\text{ is pivotal for A}]
\]
by continuity. Finally, $\mathbb{P}_p[e\text{ is pivotal for }A] = 0$ for all $e\in E\setminus\{e_1,\dots,e_n\}$. 

Note that we proved the result only for $p < 1$. For $p = 1$, this is true by continuity.
\end{proof}

We will now show a more straight forward way to characterise $\theta_n'$. This will also be the formula we will use for the Bernoulli percolation. Note, that the following characterisation does not depend on whether the event is monotone or not.

\begin{prop}
Let $A$ be an event depending on a finite number of edges only. Then
\[
\dfrac{d}{dp}\mathbb{P}_p[A] = \dfrac{1}{p(1-p)}\sum_{e\in E} \mathrm{Cov}_p(\omega_e, 1_A).
\]
\end{prop}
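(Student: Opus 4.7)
The plan is to give a direct computation, since $A$ depending on finitely many edges means $p\mapsto \mathbb{P}_p[A]$ is a polynomial in $p$ and may be differentiated term by term. First I would reduce the sum on the right-hand side to a finite one: if $A$ depends only on the edges $e_1,\dots,e_n$, then for any $e\notin\{e_1,\dots,e_n\}$ the variable $\omega_e$ is independent of $1_A$ under $\mathbb{P}_p$, so $\mathrm{Cov}_p(\omega_e, 1_A)=0$. Thus it suffices to prove
\[
\dfrac{d}{dp}\mathbb{P}_p[A] = \dfrac{1}{p(1-p)}\sum_{i=1}^n \mathrm{Cov}_p(\omega_{e_i}, 1_A).
\]

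Next I would expand $\mathbb{P}_p[A]$ as a sum over the finitely many relevant configurations. Identifying $A$ with a subset of $\{0,1\}^n$ and writing $k(\eta) := \sum_{i=1}^n \eta_i$ for $\eta\in\{0,1\}^n$, one has
\[
\mathbb{P}_p[A] = \sum_{\eta\in A} p^{k(\eta)}(1-p)^{n-k(\eta)}.
\]
Differentiating each monomial and factoring out $\mathbb{P}_p[\eta]$ gives
\[
\dfrac{d}{dp}\mathbb{P}_p[\eta] = \left(\dfrac{k(\eta)}{p}-\dfrac{n-k(\eta)}{1-p}\right)\mathbb{P}_p[\eta] = \dfrac{k(\eta)-np}{p(1-p)}\mathbb{P}_p[\eta],
\]
so that
\[
\dfrac{d}{dp}\mathbb{P}_p[A] = \dfrac{1}{p(1-p)}\,\mathbb{E}_p\!\left[(K-np)\,1_A\right],
\]
where $K = \sum_{i=1}^n \omega_{e_i}$.

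Finally I would split the expectation edge by edge: since $\mathbb{E}_p[\omega_{e_i}]=p$,
\[
\mathbb{E}_p[(K-np)1_A] = \sum_{i=1}^n \mathbb{E}_p[(\omega_{e_i}-p)1_A] = \sum_{i=1}^n \mathrm{Cov}_p(\omega_{e_i}, 1_A),
\]
which combined with the first step yields the claim. There is no real obstacle here; the one point requiring a tiny bit of care is the boundary case $p\in\{0,1\}$ where the factor $1/(p(1-p))$ is singular, but the identity then holds by continuity of both sides (the left side as the derivative of a polynomial, the right side by noting that both numerator and denominator vanish appropriately since $\omega_e$ becomes deterministic).
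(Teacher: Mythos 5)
Your proposal is correct and follows essentially the same route as the paper: expand $\mathbb{P}_p[A]$ as a polynomial over the finitely many relevant configurations, differentiate term by term, rewrite $n(\omega)-mp$ as $\sum_e(\omega_e-p)$, and identify each term as a covariance, with the infinite sum reducing to the finite one by independence. The only addition is your (harmless and correct) remark about the boundary cases $p\in\{0,1\}$.
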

\begin{proof}
We follow the proof from \cite[Theorem 2.34]{G99}. Suppose that $A$ depends on the edges on a finite set $\mathcal{E}$. In particular, we can shift the problem onto a finite state space $\widetilde{\Omega} = \{0,1\}^\mathcal{E}$. For simplicity, we will still denote the corresponding probability measure by $\mathbb{P}_p$. Now, let $n:\widetilde{\Omega}\rightarrow\mathbb{N}$ be the number of open edges and let $m = \vert \mathcal{E}\vert$ be the total number of edges. Then
\[
\mathbb{P}_p[A] = \sum_{\omega\in\widetilde{\Omega}} 1_A(\omega)p^{n(\omega)}(1-p)^{m - n(\omega)}.
\]
Since the sum is finite, we may differentiate term by term, giving
\begin{align*}
\dfrac{d}{dp}\mathbb{P}_p[A] &= \sum_{\omega\in\widetilde{\Omega}} 1_A(\omega)p^{n(\omega)}(1-p)^{m - n(\omega)}\cdot\left(\dfrac{n(\omega)}{p} - \dfrac{m - n(\omega)}{1 - p}\right)\\
&= \dfrac{1}{p(1-p)} \sum_{\omega\in\widetilde{\Omega}} 1_A(\omega)p^{n(\omega)}(1-p)^{m - n(\omega)}\cdot\Big{(} n(\omega) - mp\Big{)}\\
&= \dfrac{1}{p(1-p)} \sum_{e\in \mathcal{E}} \sum_{\omega\in\widetilde{\Omega}} 1_A(\omega)p^{n(\omega)}(1-p)^{m - n(\omega)}\cdot \Big{(}\omega_e - p\Big{)}\\
&= \dfrac{1}{p(1-p)} \sum_{e\in \mathcal{E}} \mathbb{E}_p\left[ \Big{(} \omega_e - p\Big{)}\cdot 1_A\right]\\
&= \dfrac{1}{p(1-p)} \sum_{e\in\mathcal{E}} \mathrm{Cov}_p(\omega_e, 1_A)\\
&= \dfrac{1}{p(1-p)} \sum_{e\in E} \mathrm{Cov}_p(\omega_e, 1_A).
\end{align*}
\end{proof}

We conclude this section with a very important note: In the two proofs, we did not need any information on the underlying graph. This means that both formulas are true for Bernoulli percolation on any (locally finite) graph.

\subsection{The OSSS Method}

We will now introduce the OSSS method. The entire section is inspired by \cite{DRT17a} and a lecture from Duminil-Copin which can be found on YouTube under the name "Sharp threshold phenomena in Statistical Physics" \cite{D17b}. The aim is to prove the sharpness of the phase transition. As mentioned in the previous section, we will use some sort of differential inequality for $\theta_n$ to deduce information on $\theta$. More precisely, we will prove for some $c > 0$ that
\begin{equation}\label{eq:diff_n}
\theta_n' \geq c\dfrac{n}{\Sigma_n}\theta_n,
\end{equation}
where $\Sigma_n := \sum_{k=0}^{n-1} \theta_k$.

\begin{lem}\label{lem:diff_ineq}
Assume that a family of increasing and differentiable functions $(f_n)$ defined on some interval $[0,b]$ satisfies \eqref{eq:diff_n} (where we replace $\theta_n$ by $f_n$) for some $c >0$. Assume furthermore that the sequence converges pointwise to some function $f$. Then there exists a point $p_c\in [0,b]$ such that we have the two properties:
\begin{enumerate}
	\item For every $p < p_c$, there exists some constant $c_p > 0$ such that
	\[
	f_n(p) = O_{n\to\infty}\left(\exp(-c_pn)\right).
	\]
	\item For every $p > p_c$, one has $f(p) \geq c(p - p_c)$.
\end{enumerate}
\end{lem}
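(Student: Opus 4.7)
The plan is to identify $p_c$ as the threshold past which the partial sums $\Sigma_n(p)$ fail to remain bounded, and then to read off the two conclusions by integrating \eqref{eq:diff_n} in the two respective regimes.

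\emph{Setup.} I would set $\Sigma(p) := \lim_n \Sigma_n(p) = \sum_{k \geq 0} f_k(p) \in [0, \infty]$. Since each $f_k$ is increasing in $p$, so is $\Sigma$, and I would take
\[
p_c := \sup\{p \in [0,b] \;:\; \Sigma(p) < \infty\} \in [0,b].
\]
Monotonicity gives $\Sigma(p) < \infty$ for $p < p_c$ and $\Sigma(p) = \infty$ for $p > p_c$.

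\emph{Part 1.} For $p < p_c$, I would pick an auxiliary $p' \in (p, p_c)$, so that $\Sigma(p') < \infty$. For every $q \in [p, p']$ the monotonicity of $\Sigma_n$ in $p$ yields $\Sigma_n(q) \leq \Sigma(p')$, and \eqref{eq:diff_n} rearranges into
\[
(\log f_n)'(q) = \frac{f_n'(q)}{f_n(q)} \geq \frac{cn}{\Sigma_n(q)} \geq \frac{cn}{\Sigma(p')}.
\]
Integrating over $[p, p']$ gives $f_n(p) \leq f_n(p') \exp\bigl(-cn(p'-p)/\Sigma(p')\bigr)$. Since $\Sigma(p') < \infty$ forces $f_n(p') \to 0$, the prefactor $f_n(p')$ is bounded uniformly in $n$, and the exponential decay with rate $c_p := c(p'-p)/\Sigma(p')$ follows.

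\emph{Part 2.} For $p > p_c$, the strategy is to turn \eqref{eq:diff_n} into an essentially autonomous differential inequality for $f$ by using the Cesàro mean. Writing $\sigma_n(u) := \Sigma_n(u)/n$, the hypothesis $f_k \to f$ together with Cesàro's theorem gives $\sigma_n(u) \to f(u)$ pointwise, and the inequality becomes $f_n'(u) \geq c\, f_n(u)/\sigma_n(u)$. Integrating over $[q,p]$ with $p_c < q < p$ yields
\[
f_n(p) - f_n(q) \geq c \int_q^p \frac{f_n(u)}{\sigma_n(u)}\, du.
\]
The integrand tends pointwise to $1$ wherever $f(u) > 0$, so after passing to the limit in $n$ (via dominated convergence) one expects $f(p) - f(q) \geq c(p - q)$. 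Letting $q \to p_c^+$ and using $f \geq 0$ then gives the desired $f(p) \geq c(p - p_c)$.

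\emph{Main obstacle.} The delicate step is justifying the limit in Part 2: one needs control over the integrand on the (possibly non-negligible) subset of $(p_c, p)$ where $f$ vanishes, together with a dominating function. This is straightforward when $(f_n)$ is monotone in $n$ (as in the percolation application, where $\theta_n$ is decreasing in $n$, forcing $\sigma_n \geq f$), but requires more care in general. A safer fallback is to use the cruder bound $\sigma_n(u) \leq \sigma_n(p)$, which upon taking $n \to \infty$ produces $f(p)\bigl(f(p) - f(q)\bigr) \geq c(p - q) f(q)$, and then to recover the linear growth via an ODE comparison between $-\log f$ and the solution of $\phi' = -c\, e^{\phi}$, whose explicit form $\phi(p) = -\log(c(p - p_c))$ corresponds exactly to $f(p) = c(p - p_c)$.
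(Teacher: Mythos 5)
Your definition of $p_c$ and your Part 1 are correct, and in fact more direct than the paper's: because you define $p_c$ via finiteness of $\Sigma(p)=\lim_n\Sigma_n(p)$, the uniform bound $\Sigma_n(q)\leq\Sigma(p')$ on $[p,p']$ is immediate, and a single integration of $f_n'/f_n\geq cn/\Sigma(p')$ gives the exponential decay. (The paper defines its critical point as $\beta_c=\inf\{\beta:\limsup_n\log\Sigma_n(\beta)/\log n\geq1\}$ and therefore needs an intermediate stretched-exponential step to first show that $\Sigma_n$ is bounded below $\beta_c$; your route skips that.)

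Part 2 has a genuine gap, and it is exactly the one you flag but do not close. Both of your routes are vacuous on the set where $f$ vanishes. The Fatou/Cesàro route only yields $f(p)-f(q)\geq c\,\mathrm{Leb}(\{f>0\}\cap(q,p))$: on $\{f=0\}$ both $f_n(u)$ and $\sigma_n(u)$ tend to $0$ and their ratio need not tend to $1$ (e.g.\ $f_n\sim 1/n$ gives $\sigma_n\sim\log n/n$ and $f_n/\sigma_n\to0$); the monotone-in-$n$ remark only supplies a dominating function, which is not the issue. The fallback inequality $f(p)\bigl(f(p)-f(q)\bigr)\geq c(p-q)f(q)$ reads $0\geq0$ whenever $f(q)=0$, so the ODE comparison can only be started at $p_0:=\inf\{u:f(u)>0\}$ and yields $f(p)\geq c(p-p_0)$. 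What is missing is precisely the fact that $p_0=p_c$, i.e.\ that $f$ cannot vanish on an interval $(p_c,p_c+\varepsilon)$ --- which is essentially the content of Part 2 itself. Your definition of $p_c$ only guarantees $\Sigma_n(q)\to\infty$ for $q>p_c$ with no rate, and a slow divergence such as $\Sigma_n(q)\sim\log n$ is compatible with $f(q)=0$ and too weak to exploit by crude integration of \eqref{eq:diff_n}.

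The paper closes this gap with two ingredients you would need to import: it takes the $\limsup_n\log\Sigma_n/\log n\geq1$ characterisation of the critical point, which encodes a quantitative rate of divergence, and it introduces $T_n=\frac{1}{\ln n}\sum_{i\leq n}f_i/i$ together with the telescoping bound $f_i/\Sigma_i\geq\ln\Sigma_{i+1}-\ln\Sigma_i$, giving $T_n'\geq\ln\Sigma_n/\ln n$. Integrating over $(\beta',\beta)$ and using $T_n\to f$ then produces $f(\beta)\geq(\beta-\beta')\limsup_n\ln\Sigma_n(\beta')/\ln n\geq\beta-\beta'$ directly, with no a priori positivity of $f$ required. (That $\beta_c$ coincides with your $p_c$ is true, but it is a by-product of the Part 1 argument, not something you may assume when proving Part 2.)
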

\begin{proof}
We follow the proof from \cite[Lemma 3.1]{DRT17a}. First note that we can assume $c = 1$. Indeed, we may simply consider the functions $(f_n(\cdot/c))$.

Now, define $$\beta_c := \inf\llbrace \beta\in [0,b]\;\vert\;\limsup_n \dfrac{\log \Sigma_n(\beta)}{\log n} \geq 1\rrbrace.$$
To prove the assertion, we will prove the points 1) and 2) interchanging $p_c$ by $\beta_c$. By definition of $p_c$, we then immediately get $p_c = \beta_c$. In the following, we will use that $\Sigma_n$ is non decreasing without mentioning it explicitly.
\begin{enumerate}
	\item For the first point consider some point $\beta < \beta_c$ and take $\delta > 0$ small enough so that $\beta + 2\delta < \beta_c$. By definition of $\beta_c$, there exists $\alpha > 0$ such that $\Sigma_n(\gamma) \leq n^{1-\alpha}$ for all $\gamma\in [0, \beta+2\delta]$, for almost all $n\in\mathbb{N}$. Fix such an $n$. Then, the inequality \eqref{eq:diff_n} becomes $f_n' \geq n^\alpha f_n$. Integrating this new inequality between $\beta+\delta$ and $\beta+2\delta$ yields
	\[
	\ln\big{(}f_n(\beta + 2\delta)\big{)} - \ln\big{(} f_n(\beta + \delta)\big{)} = \int_{\beta+\delta}^{\beta + 2\delta} \dfrac{f_n'(x)}{f_n(x)}\dx \geq \delta n^\alpha,
	\]
	i.e.
	\[
	f_n(\beta+ \delta) \leq f_n(\beta + 2\delta)\exp(-\delta n^\alpha) \leq \exp(-\delta n^\alpha)
	\]
	for almost all $n\in\mathbb{N}$. This means in particular that the series $(\Sigma_n(\beta +\delta))_n$ converges. Let $\Sigma > 0$ such that $\Sigma_n(\gamma) \leq \Sigma$ for all $\gamma\in [0,\beta+\delta]$ and for all $n$. This leads to the equation $f_n' \geq \dfrac{n}{\Sigma}f_n$. Integrating this inequality from $\beta$ to $\beta + \delta$ finally yields as before
	\[
	f_n(\beta) \leq f_n(\beta + \delta)\exp(-\delta n/\Sigma) \leq \exp(-c_\beta n)
	\]
	where $c_\beta := \delta/\Sigma$.
	
	\item Now, take $\beta > \beta_c$. For $n > 1$, define\[
	T_n := \dfrac{1}{\ln n}\sum_{i=1}^n \dfrac{f_i}{i}.
	\]
	With \eqref{eq:diff_n}, we obtain the differential inequality
	\[
	T_n' \geq \dfrac{1}{\ln n}\sum_{i=1}^n \dfrac{f_i}{\Sigma_i}.
	\]
	Hence, the fact that
	\[
	\dfrac{f_i}{\Sigma_i} \geq \int_{\Sigma_i}^{\Sigma_{i+1}} \dfrac{1}{t}\d{t} = \ln(\Sigma_{i+1}) - \ln (\Sigma_i)
	\]	
	leads to
	\[
	T_n' \geq \dfrac{\ln \Sigma_{n+1} - \ln\Sigma_1}{\ln n} \geq \dfrac{\ln\Sigma_{n+1}}{\ln n} \geq \dfrac{\ln\Sigma_n}{\ln n},
	\]
	where we used $\Sigma_1 = f_0 = 1$. Integrating this inequality between $\beta'\in (\beta_c,\beta)$ and $\beta$ yields $$
	T_n(\beta) \geq T_n(\beta) - T_n(\beta') \geq (\beta - \beta')\cdot \dfrac{\ln\Sigma_n}{\ln n}.
	$$
	Using the pointwise limit $f = \lim_n T_n$, we thus obtain
	\[
	f(\beta) \geq (\beta- \beta')\cdot \limsup_n \dfrac{\ln\Sigma_n}{\ln n} \geq \beta - \beta'.
	\]
	The assertion then follows by letting $\beta'$ tend to $\beta_c$.
\end{enumerate}
\end{proof}

Hence, the difficult work is to obtain the differential inequality \eqref{eq:diff_n}. That is where the OSSS method comes into play. The novelty of the method is to introduce the language of random algorithms. Since this is not the main part of this work, I will keep the definition as informal as possible. Interested readers may refer to \cite{OSSS05}.

Let $I$ be a finite index set. Suppose that the state space is given by $\Omega := \prod_{i\in I} \Omega_i$ for some arbitrary spaces $(\Omega_i,\mathcal{F}_i)$. Now, consider a function $f: \Omega \rightarrow [0,1]$ that you would like to compute. Fix a configuration $\omega\in\Omega$. An \emph{algorithm $T$ determining $f$} will compute $f(\omega) = f(\omega_i\;\vert\; i\in I)$ by querying one index after another, stopping as soon as $f(\omega)$ is determined. More formally, $T$ is given by an initial index $i_1\in I$ and a family of decision rules $(\phi_t)_{t\geq 1}$. These decision rules will decide which index the algorithm will reveal next, given the information it has gather until then: 
$$i_{t+1} = \phi_t(i_1,\dots,i_{t}, \omega(i_1),\dots,\omega(i_{t})).$$ We say that $T$ stops at instant $t\geq 1$ if the outcome of $f$ only depends on the revealed values $i_1,\dots, i_{t}$ and $\omega(i_1),\dots,\omega(i_{t})$.

Let us have a look at a trivial example. Take $\Omega = \mathbb{R}^n$ and $f(x_1,\dots,x_n) = \vert x_1\vert \wedge 1$. Consider two algorithms $T_1$ and $T_2$ which will proceed as follows: $T_1$ starts with index $i_1= 1$ and the next index will always be the following one, i.e. $i_2 = 2$, $i_3 = 3$ etc. Since $f$ only depends on its first coordinate, $T_1$ will stop at time $1$. The second algorithm starts with index $i_1 = n$ and goes backwards through the indices. In particular, it will reveal $x_1$ only at step $t = n$, hence it only stops at time $n$.

In the setting of Bernoulli percolation, we might take $\Omega = \{0,1\}^E$ with the slight difficulty that the edge set $E$ is not finite. But this can be circumvented easily. The function we want to compute is $f(\omega) = f(\omega_e\;\vert\; e\in E) := 1_{0\leftrightarrow \partial\Lambda_n}(\omega)$, since $$\theta_n(p) = \mathbb{P}[0\leftrightarrow\partial\Lambda_n] = \mathbb{E}[f].$$ One trivial algorithm to determine $f$ would be to put an arbitrary order on $E$ and to query one edge after the other (following this order) until we know whether $0$ is connected to $\partial\Lambda_n$ or not. Obviously, that is not the most intelligent way and we might end up waiting $\Omega(n^d)$ steps until the algorithm stops. Later on, we will see how to construct more effective algorithms. For now, let us come back to the general setting, and some more definitions.

Now suppose that the spaces $(\Omega_i)_i$ are equipped with the structure of a measure space and equip $\Omega$ with the product structure. We suppose that the considered applications are all measurable. Then $T$ can be understood as a random variable from $\Omega$ to $I^{\vert I\vert}$. If $P$ denotes the product measure, we then define the \emph{revealment} of an index $i\in I$ as
\[
\delta_i(T) = P(\text{$T$ reveals $i$}),
\]
i.e.~the probability that $T$ does not stop before revealing $i$. Furthermore, we define the \emph{influence} of an index $i$ on the function $f$ by
\[
\mathrm{Inf}_i(f) = P(f(\omega) \neq f(\omega')),
\]
where $\omega'$ is the configuration equal to $\omega$ for all indices $j\neq i$ and where $\omega'(i)$ is resampled independently with respect to the distribution on $\Omega_i$. (We simplify the notation by denoting the resulting distribution also by $P$.)\\

The OSSS method relies on the following result from \cite{OSSS05}. As for the previous section, we will present two versions of the OSSS inequality. For the Bernoulli percolation, we only need the second version.
\begin{theo}[OSSS inequality, see also \cite{OSSS05}]\label{theo:OSSS}
Take the notation from above. Then
\[
\mathrm{Var}(f) \leq \sum_{i\in I} \delta_i(T)\mathrm{Inf}_i(f).
\]
Suppose now that $\Omega = \{0,1\}^I$ for some finite $I$, equipped with the product of Bernoulli measures. If $f$ is non decreasing, we also obtain
\[
\mathrm{Var}(f) \leq 2\sum_{i\in I} \delta_i(T)\mathrm{Cov}(\omega_i, f).
\]
In the above, $\mathrm{Var}$ and $\mathrm{Cov}$ respectively denote the variance and the covariance with respect to $P$. 
\end{theo}

However, before attacking the proof, we will first have a look at the general idea. For this, note that
\[
\mathrm{Var}(f) = E\big{(} (f - E(f))^2\big{)} \leq E(\vert f - E(f)\vert),
\]
since $f$ takes its values in $[0,1]$. Here and in the following, $E$ denotes the expectation with respect to $P$. We will now take two iid samples $\omega,\widetilde{\omega}\in\Omega$ with respect to $P$. Rewrite the right hand side from the above inequality as
\begin{align*}
E(\vert f - E(f)\vert) = \mathbb{E}\Big{(} \vert f(\omega) - E(f)\vert\Big{)} &=
 \mathbb{E}\Big{(} \big{\vert} \mathbb{E}(f(\omega)\;\vert\; \omega) - \mathbb{E}
 (f(\widetilde{\omega})\;\vert\;\omega)\big{\vert}\Big{)} \\
 &\leq \mathbb{E}\Big{(} \mathbb{E}(\vert f(\omega) - f(\widetilde{\omega})\vert\;\vert\;\omega)\Big{)}\\
 &= \mathbb{E}(\vert f(\omega) - f(\widetilde{\omega})\vert),
\end{align*}
where $\mathbb{E}$ denotes the expectation with respect to the distribution of $(\omega,\widetilde{\omega})$. We will then define "intermediate" states $\omega^{(i)}$ which will split the dependencies with respect to the single indices. 

\begin{proof}[Proof of Theorem \ref{theo:OSSS}]
We follow the proof from \cite{OSSS05}. Denote by $i_1,\dots,i_\tau$ the indices queried by $T$ from $\omega$ before stopping. Note that $\tau$ is a random variable! Then write
\[
J[t] := \{ i_r\;\vert\; t < r \leq \tau\}
\]
for the indices queried after the instant $t$. We then define the configuration $\omega^{(t)}$ as the configuration which equals $\omega$ on $J[t]$ and which equals $\widetilde{\omega}$ on $I\setminus J[t]$. In particular, $f\left(\omega^{(0)}\right) = f\left(\omega\right)$ and $f\left(\omega^{(\tau)}\right) = f\left(\widetilde{\omega}\right)$. This yields
\begin{align*}
\mathrm{Var}(f) &\leq \mathbb{E}(\vert f(\omega) - f(\widetilde{\omega})\vert)\\
&\leq \mathbb{E}\left(\sum_{t=1}^\tau \vert f(\omega^{(t-1)}) - f(\omega^{(t)})\vert\right)\\
&= \sum_{t=1}^{\vert I\vert} \mathbb{E}\left( \vert f(\omega^{(t-1)}) - f(\omega^{(t)})\vert \cdot 1_{t\leq \tau}\right).
\end{align*}
If we set $i_t$ to some fixed value which is \emph{not} in $I$ for all $t > \tau$, then we get $1_{t\leq \tau} = \sum_{i\in I} 1_{i_t = i}$, and thus
\[
\mathrm{Var}(f) \leq \sum_{i\in I} \sum_{t = 1}^{\vert I\vert} \mathbb{E}\left(\vert f(\omega^{(t-1)}) - f(\omega^{(t)})\vert\cdot 1_{i_t = i}\right).
\]
Now, denote by $X_t$ the values revealed by $T$ until the step $t$, i.e.
\[
X_t = (\omega(i_1),\dots, \omega(i_{t\wedge\tau})).
\]
Note that $1_{i_t = i}$ is by definition $X_{t-1}$-measurable. Furthermore $\omega^{(t-1)}$ and $\omega^{(t)}$ only differ in the $i_t$-th coordinate. Using the fact that $\widetilde{\omega}$ and $(\omega_i)_{i\not\in\{i_1,\dots,i_{(t-1)\wedge\tau}\}}$ are independent of $X_{t-1}$, it follows that $\omega^{(t-1)}$ and $\omega^{(t)}$ are independent from $X_{t-1}$, hence
\begin{align*}
\mathbb{E}\left(\vert f(\omega^{(t-1)}) - f(\omega^{(t)})\vert\cdot 1_{i_t = i}\right) &= \mathbb{E}\left( \mathbb{E}\left(\vert f(\omega^{(t-1)}) - f(\omega^{(t)})\vert\;\Big{\vert}\; X_{t-1}\right)\cdot 1_{i_t = i}\right)\\
&= \mathbb{E}\left(\mathbb{E}\left(\vert f(\omega^{(t-1)}) - f(\omega^{(t)})\vert\right)\cdot 1_{i_t = i}\right).
\end{align*}
Now, $\vert f(\omega^{(t-1)}) - f(\omega^{(t)})\vert \leq 1_{f(\omega^{(t-1)}) \neq f(\omega^{(t)})}$ and we can conclude that
\begin{align*}
\mathrm{Var}(f) &\leq \sum_{i\in I}\sum_{t= 1}^{\vert I\vert} \mathbb{E}\left(\mathrm{Inf}_{i_t}(f)\cdot 1_{i_t = i}\right)\\
&= \sum_{i\in I} \mathrm{Inf}_i(f) \cdot\sum_{t=1}^{\vert I\vert} \mathbb{P}(i_t = i)\\
&= \sum_{i\in I} \mathrm{Inf}_i(f) \cdot \mathbb{P}(i_t = i\text{ for some }t\leq \tau)\\
&= \sum_{i\in I} \delta_i(T)\mathrm{Inf}_i(f).
\end{align*}

Let us now prove the second version: Assume that we have $\Omega = \{0,1\}^I$ with the product Bernoulli measure and assume $f$ to be non decreasing. Then, on the event $\{i_t = i\}$,
\begin{align*}
\vert f(\omega^{(t-1)}) - f(\omega^{(t)})\vert &= \left(f(\omega^{(t-1)}) - f(\omega^{(t)})\right)\cdot\left(\omega_{i}^{(t-1)} - \omega_i^{(t)}\right)\\
&= f(\omega^{(t-1)})\omega_i^{(t-1)} + f(\omega^{(t)})\omega_i^{(t)} - f(\omega^{(t-1)})\omega_i^{(t)} - f(\omega^{(t)})\omega_i^{(t-1)}.
\end{align*}
As before, we get by independence
\[
\mathbb{E}\left( f(\omega^{(t-1)})\omega_i^{(t-1)}\;\vert\; X_{t-1}\right) = \mathbb{E}[f(\omega)\omega_i] = \mathbb{E}\left(f(\omega^{(t)})\omega_i^{(t)}\;\vert\; X_{t-1}\right).
\]
(For the second equality, we use the tower property and the fact that $\sigma(X_{t-1})\subseteq \sigma(X_t)$.) If we could show on $\{i_t = i\}$ both
\[
\mathbb{E}\left(f(\omega^{(t-1)})\omega_i^{(t)}\;\vert\; X_{t-1}\right) \geq E(f)\mathbb{E}(\omega_i)
\]
and
\[
\mathbb{E}\left(f(\omega^{(t)})\omega_i^{(t-1)}\;\vert\; X_{t-1}\right) \geq E(f)\mathbb{E}\left(\omega_i\right),
\]
it would follow that
\begin{align*}
\mathrm{Var}(f) &\leq \sum_{i\in I}\sum_{t=1}^{\vert I\vert} \mathbb{E}\Big{(}\big{(}2\mathbb{E}(f(\omega)\omega_i) - 2E(f)\mathbb{E}(\omega_i)\big{)}\cdot 1_{i_t = i}\Big{)}\\
&= \sum_{i\in I} 2\mathrm{Cov}(f, \omega_i)\cdot \sum_{t = 1}^{\vert I\vert} \mathbb{P}(i_t = i)\\
&= 2\sum_{i\in I} \delta_i(T)\mathrm{Cov}(f,\omega_i).
\end{align*}
Let us have a closer look at the two inequalities we need to prove. Since $f$ is non decreasing, for fixed $\omega$, $f(\omega^{(t-1)})\omega_i^{(t)}$ is non decreasing in $\widetilde{\omega}$. With the FKG inequality, this gives us
\[
\mathbb{E}\left( f(\omega^{(t-1)})\omega_i^{(t)}\;\vert\; X_{\vert I\vert}\right) \geq \mathbb{E}\left(f(\omega^{(t-1)})\;\vert\; X_{\vert I\vert}\right)\cdot\mathbb{E}\left(\omega_i^{(t)}\;\vert\; X_{\vert I\vert}\right).
\]
Thus, the first lower bound follows from the tower property for conditional expectations, the fact that $\mathbb{E}\left(\omega_i^{(t)}\;\vert\; X_{\vert I\vert}\right)$ is $X_{t-1}$ measurable and the independence of $\omega^{(t-1)}$ and $\omega^{(t)}$ from $X_{t-1}$. We get a similar result for the second term, but only with respect to the conditioning on $X_t$. The final result then follows from another application from the tower property and said independence.
\end{proof}

The second version is very useful for Bernoulli percolation, because it is directly related to the derivative of $\theta_n$. To conclude, we only need to find an "intelligent" algorithm which allows us to bound the revealment of an edge. The first non trivial algorithm that comes to mind goes as follows: begin with the vertex set $V$ containing the origin only and an empty edge set $F$. While there is an edge in $E\setminus F$ which is incident to some vertex in $V$, pick the first (wrt.~to some fixed order) and add it to $F$. If it is open, add the two incident vertices to $V$. Stop as soon as $V\cap \partial\Lambda_n \neq \emptyset$. This algorithm explores the cluster of $0$ starting from the origin. The advantage of this algorithm is that we get a good bound on the revealment of edges that are far away. But edges near the origin still have a revealment close to 1.

The new idea is to average over multiple algorithms. Since the one above seems to work quite well, we will define similar algorithms in the following way. For $k=1,\dots,n$, we define the algorithm $T_k$ which does exactly the same as the one above, but starting with edge set $\partial\Lambda_k$. We add the additional restriction that the algorithm only reveals edges inside the box $\Lambda_n$. Recall that we write $\Lambda_k(v) = v + \Lambda_k$ for the box around $v$. The revealment of an edge $e = \{v,w\}$ is then easy to bound by
\[
\delta_e(T_k) \leq \mathbb{P}_p[v\leftrightarrow\partial\Lambda_k] + \mathbb{P}_p[w\leftrightarrow\partial\Lambda_k] \leq \mathbb{P}_p[v\leftrightarrow \partial\Lambda_{\vert k - \Vert v\Vert_1\vert}(v)] + \mathbb{P}_p[w\leftrightarrow\partial\Lambda_{\vert k - \Vert w\Vert_1\vert}(w)].
\]
Summing gives
\[
\sum_{k=1}^n \delta_e(T_k) \leq 4\sum_{k=0}^{n-1} \theta_k = 4\Sigma_n
\]
by invariance of $\mathbb{P}_p$ under translation. All together:
\begin{align*}
n\theta_n(1-\theta_n) = n\mathrm{Var}_p[1_{0\leftrightarrow\partial\Lambda_n}] &\leq 2\sum_{k=1}^n \sum_{e\in E} \delta_e(T_k)\mathrm{Cov}_p(f, \omega_e) \\
&\leq 8\Sigma_n\sum_{e\in E}\mathrm{Cov}_p(f,\omega_e) \\
&= 8p(1-p)\Sigma_n\theta_n' \leq 2\Sigma_n\theta_n'.
\end{align*}

On every interval of the form $[0, 1-\epsilon]$, we can bound $1-\theta_n$ uniformly in $n$ from below by $c = 1 - \theta_1(1-\epsilon) > 0$. This gives the wanted differential inequality \eqref{eq:diff_n}
\[
\theta_n' \geq \dfrac{c}{2}\cdot\dfrac{n}{\Sigma_n}\theta_n.
\]

\section{Poisson-Boolean Percolation on $\mathbb{R}^d$}\label{ssec:poisson-bool}\label{ss:Poisson-Boolean_Percolation}

This section is based on the article \cite{DRT18}. I will show how to apply the OSSS method to the Poisson-Boolean percolation model. To this end, I will first introduce the model. Then, I will present how Duminil-Copin, Raoufi and Tassion bypass the additional difficulties of the model to apply the OSSS inequality. The major difference to the Bernoulli model is the fact that two regions far away from one another are not independent anymore.

The proofs I present are mainly from the cited paper. I will try to give more insight into the intuitions and the ideas behind the proofs. However, before starting with the model itself, I need to present the notion of Poisson Point Processes. Those accustomed with this object may skip this first part.

\subsection{The Poisson Point Process}

I will derive all necessary properties of Poisson Point Processes we need hereafter. However, I will do so in a more general context. This makes the proofs considerably more readable. On the other hand, it becomes more difficult to relate the abstract objects to the percolation model we are studying afterwards. For this reason, I will first give a vague definition of Poisson Point Processes. I advise to skip the part after the motivation on a first reading. Afterwards, this part will be more easy to understand. Furthermore, this part is not meant to replace textbooks on this subject. In particular, I will not provide proofs for all the facts I use.

For further reading, I refer to \cite{K14} for a general approach and to \cite{MR96} for Poisson Point Processes in the context of Poisson-Boolean Percolation. The following structure heavily relies on both books.\\

The idea behind the Poisson Point Process (PPP) is to plot points randomly and uniformly in the space such that the number of points is also random. What would be the most intuitive way to do so? Let us suppose that we expect some $\lambda$ points in the unit square of $\mathbb{R}^2$. Then dissect your space into squares of volume $\epsilon$. If $\epsilon$ is small enough, each square will contain at most one point with high probability. We can model this situation via independent Bernoulli experiments of parameter $\epsilon\lambda$: In every square, independently, we put a point with probability $\epsilon\lambda$ and no point otherwise. A bounded set $A$ contains approximately $\mathrm{Leb}(A)/\epsilon$ of such squares, where we write $\mathrm{Leb}$ for the Lebesgue measure. Hence, the number of points in $A$ follows a binomial law with parameters $\mathrm{Leb}(A)/\epsilon + o(1)$ and $\epsilon\lambda$, of mean $\lambda\cdot\mathrm{Leb}(A)$. Using the limit theorem for binomial distribution, this would give a Poisson distribution of parameter $\lambda\cdot\mathrm{Leb}(A)$, thence the name. I will not prove this convergence in this part, but I present a proof in the appendix, see Theorem \ref{aprop:approximation}.)

In a more formal way, we would like to have a discrete set of points $\eta\subseteq\mathbb{R}^n$, i.e.~without any limit point, such that
\begin{enumerate}[label=\roman*)]
	\item for every bounded Borel set $A\subseteq\mathbb{R}^n$, one has
	\[
	\vert \eta \cap A\vert \sim \mathrm{Poisson}(\lambda\cdot \mathrm{Leb}(A)),
	\]
	\item for two disjoint bounded Borel sets $A,B\subseteq\mathbb{R}^n$, the number of points $\vert \eta\cap A\vert$ in $A$ and the number of points $\vert \eta\cap B\vert$ in $B$ are independent.
\end{enumerate}
More generally, we may replace the measure $\lambda\cdot \mathrm{Leb}$ by some other Radon measure $\mu$. We say that $\mu$ is the \emph{intensity} of the Poisson Point process.

On a first reading, you may want to jump now to section \ref{sec:boolModel}. I will now present the general construction and properties of Poisson Point Processes.\\

In what follows, let $E$ be a locally compact Polish space, e.g.~$E = \mathbb{R}^d$, with its Borel $\sigma$-algebra $\mathcal{B}(E)$. We will denote by
\[
\mathcal{B}_b(E) := \{ A\in\mathcal{B}(E)\;\vert\; A\text{ is relatively compact}\}
\]
the set of bounded Borel sets. Furthermore, let $\mathcal{M}(E)$ be the set of Radon measures on $(E,\mathcal{B}(E))$. We equip $\mathcal{M}(E)$ with the $\sigma$-algebra
\[
\mathbb{M} := \sigma\{ I_A\;\vert\; A\in \mathcal{B}_b(E)\}
\]
generated by the the functions $I_A:\mathcal{M}(E)\rightarrow\mathbb{R}$ mapping some measure $\mu$ to $\mu(A)$. In other words, $\mathbb{M}$ is the smallest $\sigma$-algebra on $\mathcal{M}(E)$ such that it is allowed (i.e.~measurable) to map a measure to the content it attributes to some Borel set. In many ways, this is a very natural choice, since it is also the Borel $\sigma$-algebra with respect to the vague topology on $\mathcal{M}(E)$.

\begin{defin}
A \emph{random measure} is a random variable defined on a probability space $(\Omega,\mathcal{F},\mathbb{P})$ taking values in $\mathcal{M}(E)$.
\end{defin}

From now on, let $\eta$ be a random measure. From the definition of the $\sigma$-algebra $\mathbb{M}$, it follows that the distribution $P_\eta$ of $\eta$ is entirely determined by the distributions of
\begin{align*}
&\quad\;\{(I_{A_1}\circ \eta,\dots,I_{A_n}\circ\eta)\;\vert\; n\in\mathbb{N},\, A_1,\dots,A_n\in\mathcal{B}_b(E)\} \\
&= \{(\eta(A_1),\dots,\eta(A_n))\;\vert\; n\in\mathbb{N},\, A_1,\dots,A_n\in\mathcal{B}_b(E)\}.
\end{align*}
If for disjoint sets $A_1,\dots,A_n\in\mathcal{B}_b(E)$, the random variables $\eta(A_1),\dots,\eta(A_n)$ are independent, we say that $\eta$ has \emph{independent increments}. In particular, this implies that the distribution of $\eta$ is completely characterised via the distributions of $(\eta(A))_{A\in\mathcal{B}_b(E)}$.

\begin{defin}
Let $\mu \in\mathcal{M}(E)$. We say that $\eta$ is a \emph{Poisson Point Process with intensity $\mu$} (in symbols: $\eta\sim PPP_\mu$) if it has independent increments and if $\eta(A) \sim \mathrm{Poisson}(\mu(A))$ for every $A\in \mathcal{B}_b(E)$. For simplicity, we write that $\eta$ is a $PPP_\mu$.
\end{defin}

\begin{theo}
For every $\mu\in\mathcal{M}(E)$, there exists a $PPP_\mu$.
\end{theo}
\begin{proof}
First, assume $\mu(E)$ to be finite. If $\mu(E) = 0$, the assertion is trivially true. From now on, assume $\mu(E) \in (0,+\infty)$. Let $(X_n)_{n\geq 0}$ be an iid sequence of distribution $\nu := \mu/\mu(E)$. Furthermore, let $N$ be independent of $(X_n)_{n\geq 1}$ of Poisson law of parameter $\mu(E)$. Set
\[
\eta := \sum_{i=1}^N \delta_{X_i}.
\]
One easily verifies that $\eta$ is a random measure. Furthermore, $\eta(A)$ takes only values in $\mathbb{N}$ for every $A\in\mathcal{B}_b(E)$. Let $A_1,\dots,A_l\in\mathcal{B}_b(E)$ be disjoint and consider $k_1,\dots,k_l\in\mathbb{N}$. On the event $\{N = n\}$, we have $\eta(A_i) = k_i$ for every $1\leq i\leq l$ if and only if there are disjoint sets of indices $I_1,\dots,I_l\subseteq\{1,\dots,n\}$ such that $\vert I_i\vert = k_i$ and $X_j \in A_i$ if and only if $j\in I_i$ for every $1\leq i\leq l$. Hence, if $\mathcal{I}$ is the set of all sequences of all such index sets, then
\begin{align*}
&\quad\;\mathbb{P}[\eta(A_1) = k_1,\dots,\eta(A_n) = k_n] \\
&= \sum_{n\in\mathbb{N}} \mathbb{P}[\eta(A_1) = k_1,\dots,\eta(A_n) = k_n\;\vert\; N = n]\cdot\mathbb{P}[N = n]\\
&= \sum_{n\geq \sum_{i=1}^l k_i} \sum_{(I_1,\dots,I_l)\in\mathcal{I}} \mathbb{P}\left[\left(\bigcap_{i=1}^l \bigcap_{j\in I_i} \{X_j\in A_i\}\right) \cap \bigcap_{j\in \{1,\dots,n\}\setminus\bigcup_{i=1}^l I_i} \llbrace X_j\not\in \bigcup_{i=1}^l A_i\rrbrace\right]\cdot\mathbb{P}[N = n].
\end{align*}
Set $k := \sum_{i=1}^l k_i$. By independence, we may transform the intersections into products to obtain
\begin{align*}
&= \sum_{n\geq \sum_{i=1}^l k_i} \sum_{(I_1,\dots,I_l)\in\mathcal{I}} \left(\prod_{i=1}^l\prod_{j\in I_i} \nu(A_i)\right)\cdot\left( \prod_{j\in \{1,\dots,n\}\setminus\bigcup_{i=1}^l I_i} \left(1 - \sum_{i=1}^l \nu(A_i)\right)\right)\cdot\mathbb{P}[N = n]\\
&= \sum_{n\geq k} \dfrac{n!}{k_1!k_2!\dots k_l!(n-k)!} \left(\prod_{i=1}^l \nu(A_i)^{k_i}\right)\cdot \left(1 - \sum_{i=1}^l \nu(A_i)\right)^{n - k}\cdot \dfrac{\mu(E)^n}{n!}e^{-\mu(E)}\\
&= \sum_{n\geq k} \dfrac{n!}{k_1!k_2!\dots k_l!(n-k)!} \left(\prod_{i=1}^l \mu(A_i)^{k_i}\right)\cdot \left(\mu(E) - \sum_{i=1}^l \mu(A_i)\right)^{n - k}\cdot \dfrac{1}{n!}e^{-\mu(E)}.
\end{align*}
The $n!$ simplifies and by changing the indices, we get
\begin{align*}
&= \sum_{n\geq 0} \dfrac{1}{k_1!k_2!\dots k_l!n!} \left(\prod_{i=1}^l \mu(A_i)^{k_i}\right)\cdot \left(\mu(E) - \sum_{i=1}^l \mu(A_i)\right)^{n}\cdot e^{-\mu(E)}\qquad\qquad\qquad\qquad\\
&= \left(\prod_{i=1}^l \dfrac{\mu(A_i)^{k_i}}{k_i}\right)\cdot \sum_{n\geq 0} \dfrac{1}{n!}\left(\mu(E) - \sum_{i=1}^k \mu(A_i)\right)\cdot e^{-\mu(E)}\\
&= \left(\prod_{i=1}^l \dfrac{\mu(A_i)^{k_i}}{k_i}\right)\cdot e^{\mu(E) - \sum_{i=1}^l \mu(A_i)}\cdot e^{-\mu(E)} = \left(\prod_{i=1}^l \dfrac{\mu(A_i)^{k_i}}{k_i}e^{-\mu(A_i)}\right).
\end{align*}
We conclude that the random variables $\eta(A_1),\dots,\eta(A_l)$ are independent of Poisson law with parameters $\mu(A_1),\dots,\mu(A_l)$. We conclude that $\eta$ is a $PPP_\mu$.

Let us return to a general $\mu\in\mathcal{M}(E)$. Then $\mu$ is $\sigma$-finite. Hence, there exists a sequence $(E_n)_{n\in\mathbb{N}}\subset\mathcal{B}(E)$ such that $E_n\uparrow E$ and $\mu(E_n) < \infty$ for every $n$. Define the finite measures $\mu_{n} := \mu\big{(} (E_{n}\setminus E_{n-1})\cap \cdot\big{)}$ for every $n\in\mathbb{N}$, where $E_{-1} := \emptyset$. Consider independent Poisson Point Processes $(\eta_n)_{n\in\mathbb{N}}$ with intensities $(\mu_n)_{n\in\mathbb{N}}$ respectively. Finally, define
\[
\eta := \sum_{n\in\mathbb{N}} \eta_n.
\]
If $A\in\mathcal{B}(E)$, then there exists $n\in\mathbb{N}$ such that $A\subseteq E_n$. This means
\begin{align*}
P_{\eta(A)}  &= P_{\eta_0(A) + \dots + \eta_n(A)}\\
&=P_{\eta_0(A)} \ast \dots\ast P_{\eta_n(A)} \\
&= \mathrm{Poisson}(\mu_0(A))\ast\dots\ast\mathrm{Poisson}(\mu_n(A)) \\
&= \mathrm{Poisson}\left(\sum_{i=0}^n \mu_i(A)\right) = \mathrm{Poisson}(\mu(A)).
\end{align*}
The independence of $\eta(A_1),\dots,\eta(A_l)$ for disjoint bounded Borel sets $A_1,\dots,A_l$ is shown in the same way: take $n$ large enough such that $E_n$ contains all the sets and decompose them into $E_0\cap A_i$ and $(E_{m+1}\setminus E_m)\cap A_i$, $0\leq m\leq n-1$. 
\end{proof}

From now on, let $\eta$ be a $PPP_\mu$ as above. A very important consequence of the construction is the following: If $\mu$ has no atoms, then $\eta(\{x\}) \leq 1$ for all $x\in E$ a.s. In other words, we may identify $\eta$ with the (random) set $\{x\in E\;\vert\; \eta(\{x\}) = 1\}$. If $\mu$ has atoms, $\eta$ can still be seen as a multiset. From now on, we will make no distinction between the random measure $\eta$ and the corresponding random (multi)subset of $E$. Note that $\vert \eta\cap A\vert$ is a.s.~finite for every bounded Borel set of $E$. In particular, $\eta$ has no limit points. Furthermore, since $\mu$ is $\sigma$-finite, $\eta$ contains at most countably many points. The set we described in the motivation is obtained with $\mu = \lambda\cdot\mathrm{Leb}$. Before having a closer look at the Poisson point process we will need later on, we will derive some very useful properties of Poisson point processes.

\begin{prop}
Let $\eta_1$ and $\eta_2$ be two independent Poisson point processes with intensities $\mu_1$ and $\mu_2$ respectively. Then $\eta := \eta_1 + \eta_2$ is a Poisson point process with intensity $\mu_1 + \mu_2$.
\end{prop}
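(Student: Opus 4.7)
The plan is to verify directly the two defining properties of a Poisson point process for $\eta := \eta_1 + \eta_2$, using the characterisation (immediate from the definition of $\mathbb{M}$) that the law of a random measure is determined by the joint distributions of $(\eta(A_1),\dots,\eta(A_n))$ for $A_1,\dots,A_n \in \mathcal{B}_b(E)$. First I would check that $\eta$ is a random measure: pointwise in $\omega$, the sum of two Radon measures is Radon (both are finite on compacts), and for every $A \in \mathcal{B}_b(E)$ one has $I_A \circ \eta = I_A \circ \eta_1 + I_A \circ \eta_2$, which is measurable as a sum of measurable maps, so $\eta$ is $\mathbb{M}$-measurable.

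Next I would handle the one-set marginals. Fix $A \in \mathcal{B}_b(E)$. Then $\eta(A) = \eta_1(A) + \eta_2(A)$ is a sum of two independent random variables with laws $\mathrm{Poisson}(\mu_1(A))$ and $\mathrm{Poisson}(\mu_2(A))$ respectively. A standard convolution calculation (or equivalently, the fact that the characteristic function of $\mathrm{Poisson}(\lambda)$ is $\exp(\lambda(e^{it}-1))$ and characteristic functions multiply under convolution of independent laws) shows that this sum follows $\mathrm{Poisson}(\mu_1(A)+\mu_2(A)) = \mathrm{Poisson}((\mu_1+\mu_2)(A))$.

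For the independent-increments property, take disjoint $A_1,\dots,A_n \in \mathcal{B}_b(E)$. By the independent-increments property of $\eta_1$, the vector $(\eta_1(A_1),\dots,\eta_1(A_n))$ has independent components, and similarly for $\eta_2$. Combined with the assumed independence of $\eta_1$ and $\eta_2$, the family of $2n$ random variables $\{\eta_k(A_j) : k\in\{1,2\},\ j \in \{1,\dots,n\}\}$ is mutually independent. Since $\eta(A_j) = \eta_1(A_j) + \eta_2(A_j)$ is a measurable function of the pair $(\eta_1(A_j),\eta_2(A_j))$, and these pairs involve disjoint subfamilies of the independent $2n$-tuple, a standard grouping lemma for independence yields that $\eta(A_1),\dots,\eta(A_n)$ are independent. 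Together with the first two steps, this identifies $\eta$ as a $PPP_{\mu_1+\mu_2}$.

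There is no real obstacle here; the only subtle point is the grouping step in the independence argument, where one must invoke the fact that independence is preserved under passing to measurable functions of disjoint blocks of an independent family (which in turn follows from the definition of independence of $\sigma$-algebras and a $\pi$-$\lambda$ argument). Once this is granted, the proof is essentially a bookkeeping exercise.
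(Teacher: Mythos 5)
Your proposal is correct and follows essentially the same route as the paper: the paper's proof is precisely the convolution computation for the joint law of $(\eta(A_1),\dots,\eta(A_l))$, carried out explicitly via the binomial theorem so that the Poisson marginals and the factorisation (independent increments) drop out simultaneously, whereas you outsource these two steps to the standard ``sum of independent Poissons'' fact and a grouping lemma. The extra measurability check you include is a harmless (and welcome) addition that the paper omits.
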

\begin{proof}
Let $A_1,\dots,A_l\in\mathcal{B}_b(E)$ be disjoint and $k_1,\dots,k_l\in\mathbb{N}$. Then
\begin{align*}
&\;\mathbb{P}\left[\eta(A_1) = k_1,\dots, \eta(A_l) = k_l\right]\\
&= \sum_{i_1 = 0}^{k_1}\cdots\sum_{i_l=0}^{k_l} \mathbb{P}\left[\eta_1(A_1) = i_1, \eta_2(A_1) = k_1 - i_1, \dots, \eta_1(A_l) = i_l, \eta_2(A_l) = k_l - i_l\right].
\end{align*}
Using the fact that $\eta_1$ and $\eta_2$ are independent Poisson point processes, we get
\begin{align*}
&= \sum_{i_1 = 0}^{k_1}\cdots\sum_{i_l=0}^{k_l} \prod_{j=1}^l \dfrac{\mu_1(A_j)^{i_j}}{i_j!}e^{-\mu_1(A_j)}\cdot\dfrac{\mu_2(A_j)^{k_j-i_j}}{(k_j-i_j)!}e^{-\mu_2(A_j)}\\
&= \left(\prod_{j=1}^l \dfrac{\left(\mu_1(A_j) + \mu_2(A_j)\right)^k_j}{k_j!}e^{-\mu_1(A_j) - \mu_2(A_j)}\right)\cdot  \underbrace{\sum_{i_1 = 0}^{k_1}\cdots\sum_{i_l=0}^{k_l}\prod_{j=1}^l \binom{k_j}{i_j}\dfrac{\mu_1(A_j)^{i_j}\mu_2(A_j)^{k_j-i_j}}{(\mu_1(A_j) + \mu_2(A_j))^{k_j}}}_{=1},
\end{align*}
where we use the Binomial Formula and the fact that we may rewrite the last sum as 
\[
\prod_{j=1}^l \left(\sum_{i = 0}^{k_j} \binom{k_j}{i} \dfrac{\mu_1(A_j)^{i_j}\mu_2(A_j)^{k_j-i_j}}{(\mu_1(A_j) + \mu_2(A_j))^{k_j}}\right).
\]
The result follows as usual.
\end{proof}

The last result we will prove before concentrating on the Poisson-Boolean case is known under different names. First proved by Mecke, it is often called Mecke's eqution or Mecke's formula.

\begin{theo}[Mecke's formula, \cite{M67}]\label{theo:mecke}
Let $\Phi: E\times \mathcal{M}(E)\rightarrow [0,+\infty]$ be positive measurable. Then
\[
\mathbb{E}\left[\int_E \Phi(x, \eta) \d{\eta(x)}\right] = \int_E\mathbb{E}\left[ \Phi(x,\eta + \delta_x) \right]\d{\mu(x)}.
\]
\end{theo}
\begin{proof}
As before, assume first that $\mu$ is finite. Then the above construction yields
\begin{align*}
\mathbb{E}\left[\int_E \Phi(x, \eta) \d{\eta(x)}\right] &= \mathbb{E}\left[\sum_{i=1}^N \Phi(X_i,\eta)\right]\\
&= \mathbb{E}\left[\mathbb{E}\left[\left.\sum_{i=1}^N \Phi\left(X_i, \sum_{j=1}^N \delta_{X_j}\right)\;\right\vert\; N\right]\right]\\
&= \sum_{n\geq 1} \sum_{i=1}^n \mathbb{E}\left[\Phi\left(X_i, \sum_{j=1}^n \delta_{X_j}\right)\right]\cdot \mathbb{P}(N = n)\\
&= \sum_{n\geq 1}\sum_{i=1}^n \mathbb{E}\left[\mathbb{E}\left[\left.\Phi\left(X_i, \sum_{j=1}^n \delta_{X_j}\right)\;\right\vert\; X_i\right]\right]\cdot \dfrac{\mu(E)^n}{n!}e^{-\mu(E)}\\
&= \sum_{n\geq 1}\sum_{i=1}^n \int_E \mathbb{E}\left[\Phi\left(x, \delta_x + \sum_{j=1, j\neq i}^n \delta_{X_j}\right)\right]\d{\dfrac{\mu(x)}{\mu(E)}}\cdot \dfrac{\mu(E)^n}{n!}e^{-\mu(E)}.
\end{align*}
Since the sequence $(X_n)_{n\geq 1}$ is iid, we may replace $\sum_{i\neq j}$ by $\sum_{i=1}^{n-1}$, giving
\begin{align*}
\qquad\;\qquad\qquad\qquad&= \int_E \sum_{n\geq 1}\sum_{i=1}^n \mathbb{E}\left[\Phi\left(x, \delta_x + \sum_{j=1}^{n-1}\delta_{X_j}\right)\right] \cdot \dfrac{\mu(E)^{n-1}}{n!}e^{-\mu(E)}\d{\mu(x)}.
\end{align*}
The random variable is now independent of $i$, hence
\begin{align*}
\qquad\;\qquad\qquad\quad&= \int_E \sum_{n\geq 1} n\cdot\mathbb{E}\left[\Phi\left(x, \delta_x + \sum_{j=1}^{n-1}\delta_{X_j}\right)\right] \cdot \dfrac{\mu(E)^{n-1}}{n!}e^{-\mu(E)}\d{\mu(x)}\\
&= \int_E \sum_{n\geq 1}\mathbb{E}\left[ \Phi\left(x, \delta_x + \sum_{j=1}^{n-1}\delta_{X_j}\right)\right] \cdot \dfrac{\mu(E)^{n-1}}{(n-1)!}e^{-\mu(E)}\d{\mu(x)}\\
&= \int_E \sum_{n\geq 0} \mathbb{E}\left[\Phi\left(x, \delta_x + \sum_{j=1}^{n}\delta_{X_j}\right)\right]\cdot \dfrac{\mu(E)^{n}}{n!}e^{-\mu(E)} \d{\mu(x)}.
\end{align*} 
The result follows via
\[
\sum_{n\geq 0} \mathbb{E}\left[\Phi\left(x, \delta_x + \sum_{j=1}^{n}\delta_{X_j}\right)\right] \cdot \dfrac{\mu(E)^{n}}{n!}e^{-\mu(E)} = \mathbb{E}\left[\Phi\left(x, \delta_x + \sum_{j=1}^N \delta_{X_j}\right)\right].
\]

The generalisation to a $\sigma$-finite measure is straight forward, but very technical and tedious. I therefore present only the outline of it. Instead of proving Mecke's formula for all functions, it suffices to show it for indicator functions of sets $A\times B$ with $A\in\mathcal{B}(E)$ and $B\in\mathbb{M}$. To be precise, we will take even less sets, making sure that they still generate the entire $\sigma$-algebra. Indeed, we will take the sequence $(E_n)$ from before and we will look at the restriction $\eta_n$ to $E_n$, considering only sets, where the formula works after replacing $\eta$ with $\eta_n$. The rest of the proof can be found in Mecke's original paper \cite{M67}.
\end{proof}

Note that
\[
\int_E \Phi(x, \eta)\d{\eta(x)} = \sum_{x\in\eta} \Phi(x,\eta),
\]
where we see $\eta$ as a random (multi)set.\\

Let us now have a look at the Poisson point process which we will use for the Poisson-Boolean model. For this, we take $E = \mathbb{R}^d\times\mathbb{R}_+$ and $\mu = \lambda\mathrm{d}z\otimes \nu$, where $\mathrm{d}z$ denotes the Lebesgue measure on $\mathbb{R}^d$ and where $\nu$ is some probability measure on $\mathbb{R}_+$. For the intuition, we interpret a point $(x,r)\in\eta$ as a ball with center $x$ and radius $r$. For this intuition to be useful, we would like to have the representation
\[
\eta = \{(x,r_x)\;\vert\; x\in \xi\},
\]
where $\xi$ is a Poisson point process with intensity $\lambda\mathrm{d}z$ and $(r_x)_{x\in\mathbb{R}^d}$ is an iid family of random variables of law $\nu$. Indeed, we have this equality in law: take two bounded Borel sets $A\subseteq\mathbb{R}^d$ and $B\subseteq\mathbb{R}_+$ and $k\in\mathbb{N}$. Then
\begin{align*}
\mathbb{P}[\vert \{(x,r_x)\;\vert\; x\in\xi\}\cap A\times B\vert = k] 
&= \mathbb{P}\left[\vert \{r_x\;\vert\; x\in\xi\cap A\}\cap B\vert = k\right]\\
&= \sum_{i\geq k} \mathbb{P}\left[ \vert \{r_1,\dots,r_i\}\cap B\vert = k\right]\cdot\mathbb{P}[\vert \xi\cap A\vert = i]\\
&= \sum_{i\geq k} \binom{i}{k} \left(\nu(B)^k\cdot (1-\nu(B))^{i-k}\right)\cdot \dfrac{\big{(}\lambda\mathrm{Leb}(A)\big{)}^i}{i!}e^{-\lambda\mathrm{Leb}(A)}\\
&= \dfrac{\big{(}\nu(B)\lambda\mathrm{Leb}(A)\big{)}^k}{k!}e^{-\lambda\mathrm{Leb}(A)} \sum_{i\geq 0} \dfrac{(1 - \nu(B))^i\cdot (\lambda\mathrm{Leb}(A))^i}{i!}\\
&=  \dfrac{\big{(}\nu(B)\lambda\mathrm{Leb}(A)\big{)}^k}{k!}e^{-\lambda\mathrm{Leb}(A)}\cdot e^{(1 - \nu(B))\lambda\mathrm{Leb}(A)}\\
&=  \dfrac{\big{(}\nu(B)\lambda\mathrm{Leb}(A)\big{)}^k}{k!}e^{-\lambda\nu(B)\mathrm{Leb}(A)} = \mathbb{P}[\eta(A\times B) = k],
\end{align*}
where $(r_i)_{i\in\mathbb{N}}$ is an iid family of random variables of law $\nu$, independent of all other variables. Since $\{A\times B\;\vert\; A\in\mathcal{B}(\mathbb{R}^d), B\in\mathcal{B}(\mathbb{R}_+)\text{ bounded}\}$ is a generator of the Borel sets of $\mathbb{R}^d\times\mathbb{R}_+$ which is stable under finite intersection, the claim follows.

Later on, we will need some more properties of this specific PPP. Since they need a more extensive vocabulary, I will not present their proofs here. Furthermore, the proofs can be very tedious, which is why I will only present a part of them which can be found in the appendix.

\subsection{The Model: Definitions, Notations and Basic Properties}\label{sec:boolModel}

Let $\lambda > 0$ and $\nu$ a probability measure on $\mathbb{R}_+^*$. We consider a Poisson point process (PPP) $\eta$ on $\mathbb{R}^d\times\mathbb{R}_+$ with intensity $\lambda\mathrm{d}z\otimes \nu$, where $\mathrm{d}z$ denotes the Lebesgue measure on $\mathbb{R}^d$. We will write $\mathbb{P}_\lambda$ for the corresponding probability measure, assuming $\nu$ to be fixed. In the following, we will exclusively work with the representation
\[
\eta = \{ (x, r_x)\;\vert\; x\in\xi\}
\]
for some homogeneous PPP $\xi$ on $\mathbb{R}^d$. Furthermore, we will work on the event with probability one on which every compact set of $\mathbb{R}^d\times\mathbb{R}_+$ contains a finite number of points only. We then define the \emph{occupied set} as
\[
\mathcal{O}(\eta) := \bigcup_{(x,r_x)\in\eta} B(x, r_x),
\]
where $B_r^x := B(x,r)$ is the closed ball of centre $x\in\mathbb{R}^d$ and radius $r > 0$. 

\begin{figure}[ht]  
\centering
  \begin{subfigure}[b]{0.33\linewidth}
    \centering
    \includegraphics[width=.8\linewidth]{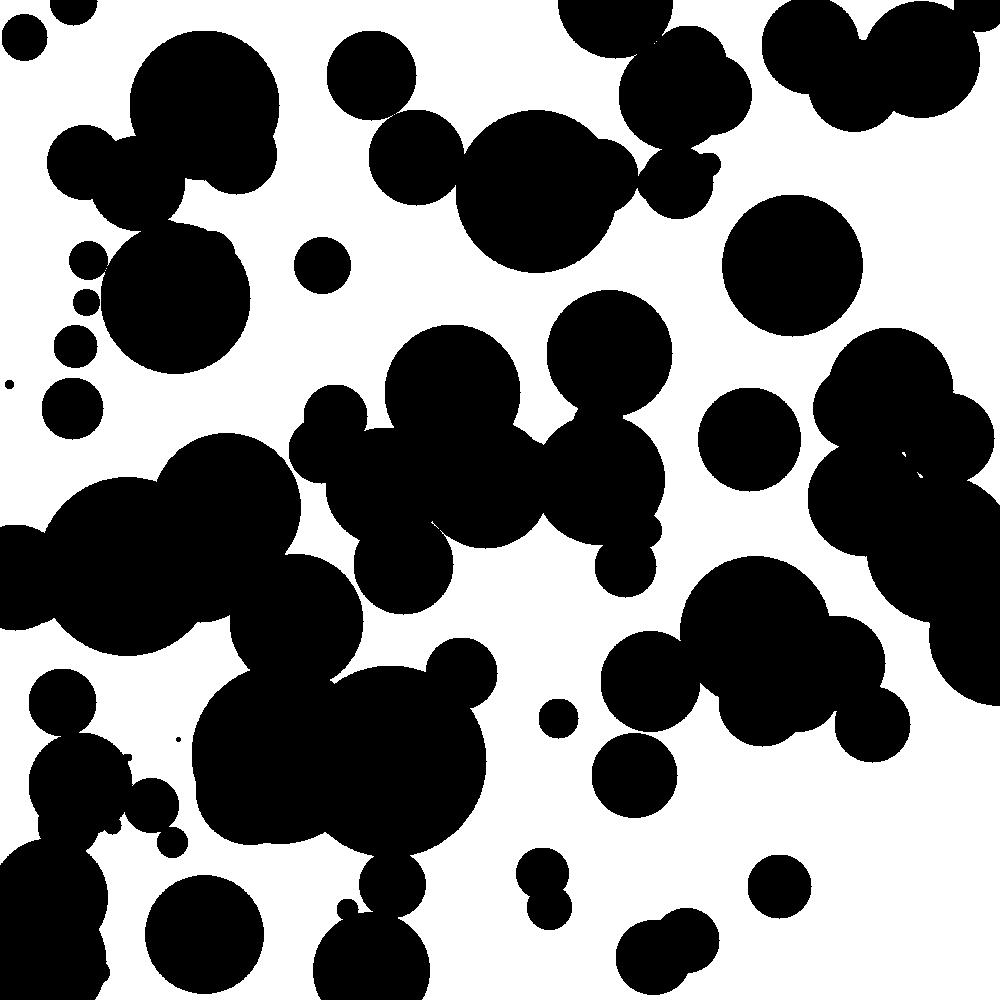}
  \end{subfigure}
  \begin{subfigure}[b]{0.33\linewidth}
    \centering
    \includegraphics[width=.8\linewidth]{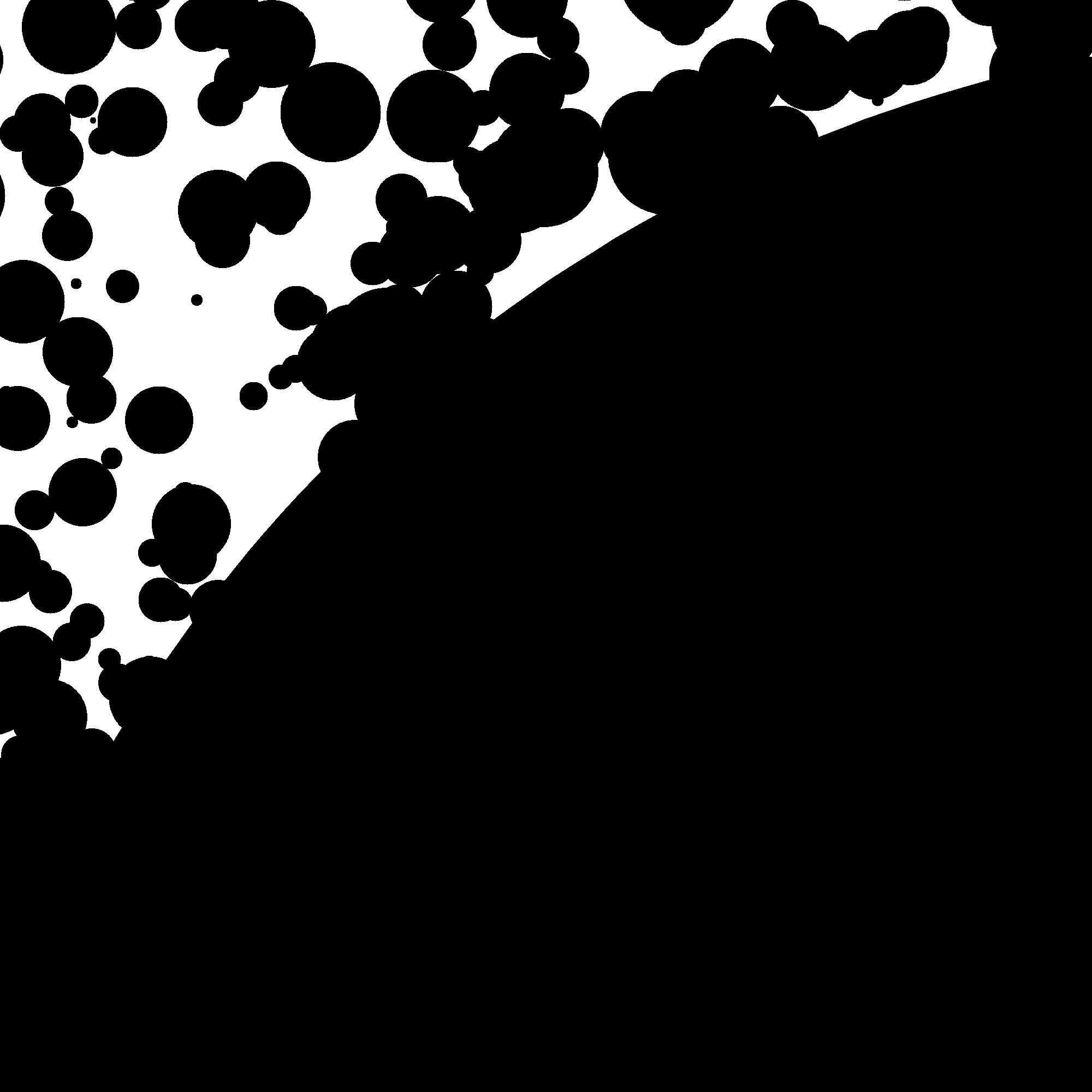}
  \end{subfigure} 
  \begin{subfigure}[b]{0.33\linewidth}
    \centering
    \includegraphics[width=.8\linewidth]{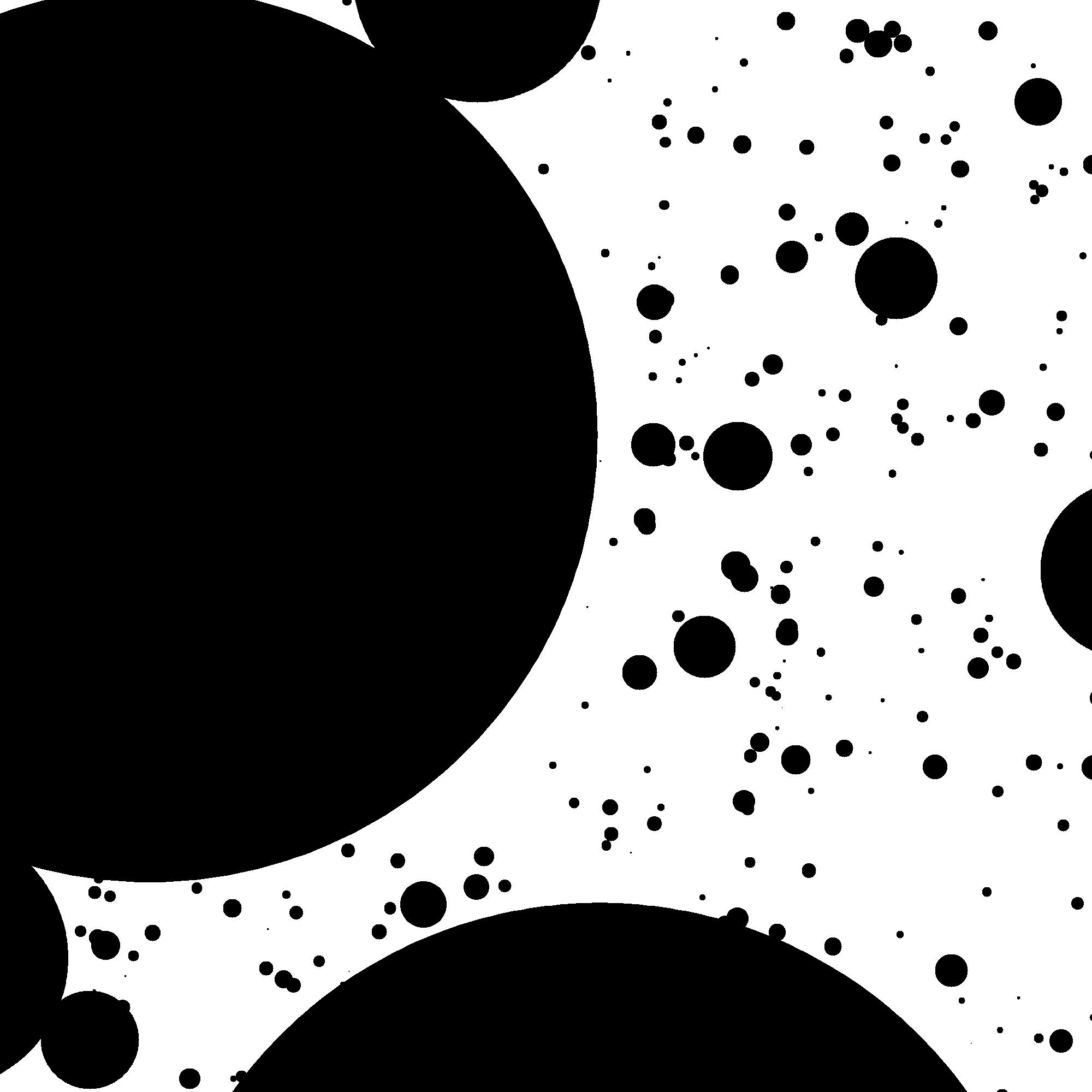}
  \end{subfigure}
  \caption{Samples of Poisson-Boolean models for different intensities and radii distributions} 
\end{figure}

For $x = 0$, we simplify the notation by writing $B_r := B_r^0$. Furthermore, we will denote by $S_r := \partial B_r$ the sphere of radius $r$. For $x,y\in\mathbb{R}^d$ and $A\subseteq\mathbb{R}^d$, we write $x\leftrightarrow y$, if there exists a continuous path from $x$ to $y$ which lies entirely in $\mathcal{O}(\eta)$, and we write $x\leftrightarrow A$, if $x\leftrightarrow y$ for some $y\in A$. (We use the convention that $x\leftrightarrow x$ for all $x\in\mathbb{R}^d$.) Finally, we write $x\leftrightarrow \infty$, if $x$ belongs to an unbounded connected component of $\mathcal{O}(\eta)$. If we consider only the balls which are \emph{entirely contained} in some set $Z\subseteq\mathbb{R}^d$, we replace $\leftrightarrow$ by $\overset{Z}{\leftrightarrow}$. Note that this is not equivalent to the fact that two points are connected in $\mathcal{O}(\eta)\cap Z$. We will write $\eta(Z)$ for the induced process (cf. Figure \ref{fig:eta(Z)}). 
\begin{figure}[ht]
\centering
  \begin{subfigure}[b]{0.49\linewidth}
    \centering
    \includegraphics[width=.8\linewidth]{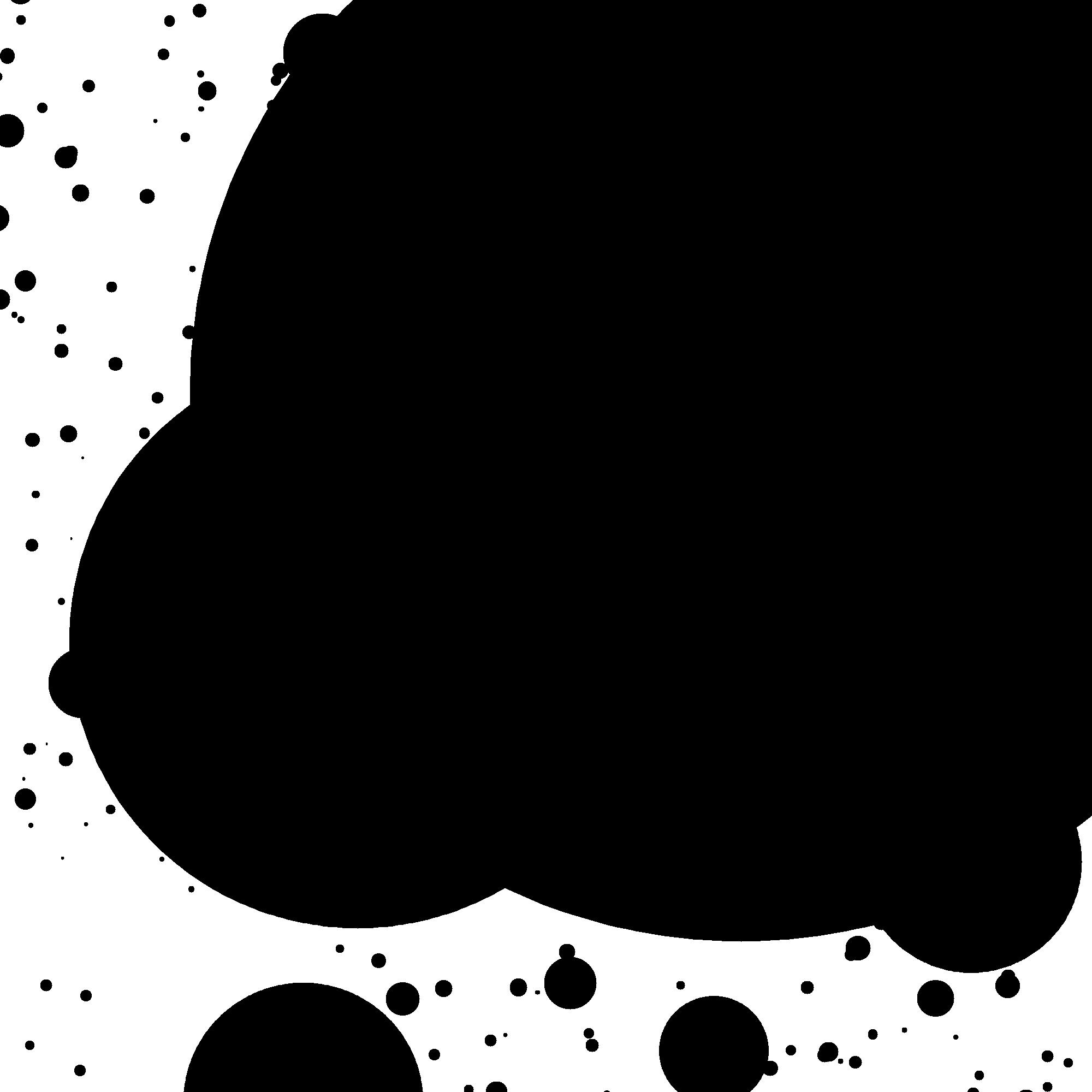}
    \caption{$\mathcal{O}(\eta) \cap Z$}
  \end{subfigure}
  \begin{subfigure}[b]{0.49\linewidth}
    \centering
    \includegraphics[width=.8\linewidth]{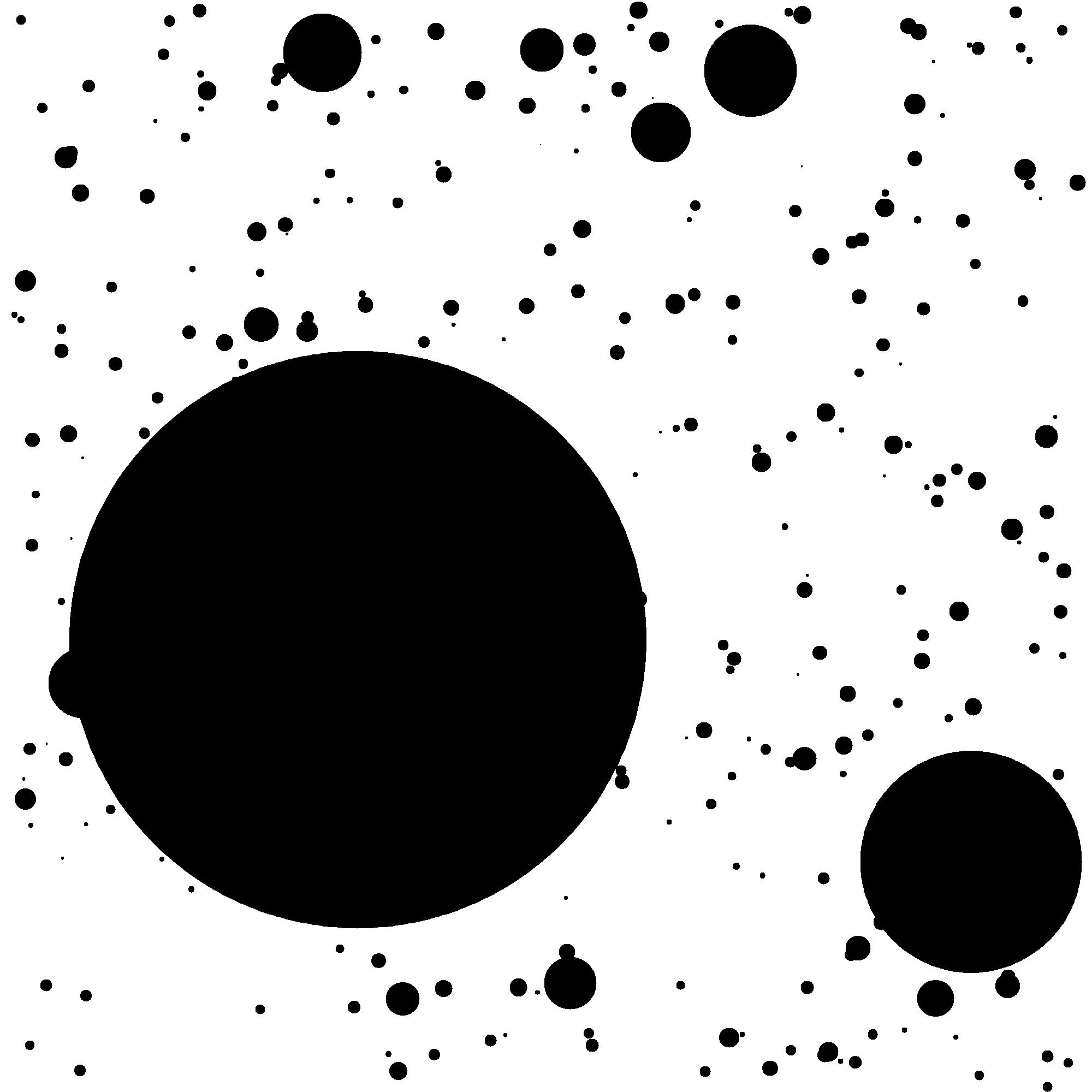}
    \caption{$\mathcal{O}(\eta(Z))$}
  \end{subfigure}
  \caption{Difference between $\eta$ and $\eta(Z)$, where $Z$ is the visible box}
  \label{fig:eta(Z)} 
\end{figure}

As for the Bernoulli percolation model, all the interesting sets are measurable, thus events. The proof of this fact is very unpleasant, so we will just have a look at the general idea. This is particularly interesting, because it uses the method of discretization which we will need later on. A complete proof of the measurability of $\{x\leftrightarrow y\}$ can be found in the appendix.

For $\epsilon > 0$, consider the boxes $\eta_{x,r} = \eta \cap [x, x+\epsilon)^d\times [r, r+\epsilon)$ with $(x,r)\in\epsilon (\mathbb{Z}^d\times\mathbb{N})$. Now, if a box $\eta_{x,r}$ contains one or more points, we replace them by the point $(x,r)$. From now on, we will work on the event of probability 1 that no two balls from the model are tangent. In particular, if we consider only a finite number of occupied balls, then they intersect if and only if the approximations intersect for $\epsilon$ small enough. Note, that in the approximation, we only consider a countable number of points, thus measurability is not a big problem. Finally, we use the fact that every ball $B_n^x$ contains only a finite number of occupied balls and the fact that
\[
\{x \leftrightarrow y\} = \bigcup_{n\in\mathbb{N}} \{ x\overset{B_n^x}{\leftrightarrow} y\}.
\]

Before continuing, we first need to discuss, if the model is interesting, i.e. we need to make sure that $\mathcal{O}(\eta)\neq \mathbb{R}^d$.
\begin{prop}
The following two assertions are equivalent.
\begin{enumerate}[label=\roman*)]
	\item The law $\nu$ has a finite $d$-th moment.
	\item The Poisson-Boolean model is non trivial, i.e. $\mathcal{O}(\eta)\neq \mathbb{R}^d$ almost surely.
\end{enumerate}
\end{prop}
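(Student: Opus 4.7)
The plan is to reduce both implications to computing the $\mu$-measure (where $\mu = \lambda\,dz \otimes \nu$ is the intensity of $\eta$) of the set of points in $\mathbb{R}^d\times\mathbb{R}_+$ corresponding to balls that contain a prescribed compact region, and then to bootstrap from a pointwise statement to a statement about the whole random set via translation invariance.

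For the direction $(i)\Rightarrow(ii)$, the first step is to observe that the number of $(x,r_x) \in \eta$ with $0 \in B(x,r_x)$ is $\eta(A_0)$, where $A_0 := \{(x,r): |x|\le r\}$ has $\mu$-measure $\lambda \omega_d \int r^d\,d\nu(r)$ and $\omega_d$ denotes the volume of the unit ball. When this quantity is finite, $\eta(A_0)$ is Poisson-distributed with finite mean, so $\mathbb{P}_\lambda[0\notin\mathcal{O}(\eta)] = \exp(-\lambda\omega_d\int r^d\,d\nu) > 0$. Translation invariance of $\xi$ makes the event $\{\mathcal{O}(\eta) = \mathbb{R}^d\}$ invariant under shifts of the centres; invoking ergodicity of the Poisson point process under such translations (the analogue of the ergodicity lemma proved earlier for $\mathbb{P}_p$), this event has probability $0$ or $1$, and since it is bounded above by $\mathbb{P}_\lambda[0 \in \mathcal{O}(\eta)] < 1$, it must be $0$.

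For the direction $(ii)\Rightarrow(i)$, I argue by contraposition: assume $\int r^d\,d\nu(r) = \infty$ and aim for $\mathcal{O}(\eta) = \mathbb{R}^d$ almost surely. The naive approach of showing that every fixed point is almost surely covered does succeed ($\eta(A_0) = \infty$ a.s., giving $\mathbb{P}_\lambda[y\in\mathcal{O}(\eta)] = 1$ for each $y$), but via Fubini it only yields $\mathrm{Leb}(\mathcal{O}(\eta)^c) = 0$ almost surely, not equality with $\mathbb{R}^d$. The trick is to enlarge the test object and consider
\[
A_1 := \{(x,r) \in \mathbb{R}^d\times\mathbb{R}_+ : [0,1]^d \subseteq B(x,r)\}.
\]
For $r\ge\sqrt{d}$, the slice $\{x : (x,r)\in A_1\}$ equals $\bigcap_{y\in[0,1]^d}B(y,r)$, a convex body containing $B(c,r-\sqrt{d})$ where $c$ is the centre of the unit cube; hence its Lebesgue measure is at least $\omega_d(r-\sqrt{d})^d$, whence $\mu(A_1) \ge \lambda\omega_d\int_{\sqrt{d}}^\infty (r-\sqrt{d})^d\,d\nu(r) = \infty$. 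Partitioning $A_1$ into countably many pieces of finite positive $\mu$-measure yields independent Poisson variables whose means sum to $\infty$; by the second Borel--Cantelli lemma, $\eta(A_1) = \infty$ almost surely, so in particular $[0,1]^d \subseteq \mathcal{O}(\eta)$ almost surely. Translating this almost-sure statement by $k\in\mathbb{Z}^d$ and taking a countable union gives $\mathbb{R}^d = \bigcup_{k\in\mathbb{Z}^d}(k+[0,1]^d) \subseteq \mathcal{O}(\eta)$ almost surely.

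The main obstacle is precisely this last direction: moving from ``every fixed point is almost surely covered'' to ``$\mathcal{O}(\eta) = \mathbb{R}^d$ almost surely'' is not a union-bound matter, since $\mathbb{R}^d$ is uncountable. The resolution is to strengthen the pointwise event to ``the entire unit box sits inside a single ball of $\eta$''; the divergence of the $d$-th moment keeps the $\mu$-measure of eligible centres infinite, which makes the stronger event almost sure and, crucially, stable under the countable union over integer translates of the box.
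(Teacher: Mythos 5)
Your proof is correct and follows essentially the same route as the paper's: both reduce the question to the $\mu$-measure of the set of points $(x,r)$ whose ball contains a fixed compact set (the paper uses $B_1^0$ via $A=\{(x,r):\Vert x\Vert\le r-1\}$, you use the origin and the unit cube), observe that this measure is finite exactly when $\nu$ has a finite $d$-th moment, and conclude with translation invariance and ergodicity. If anything you are more careful than the paper at two points: you avoid the paper's (false as stated) identification of $\{B_1^0\subseteq\mathcal{O}(\eta)\}$ with $\{\eta(A)\ge 1\}$ by testing coverage of a single point for the direction (i)$\Rightarrow$(ii), and you spell out the countable union over integer translates needed to pass from ``the unit box is a.s.\ contained in one ball'' to $\mathcal{O}(\eta)=\mathbb{R}^d$ a.s.
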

\begin{proof}
See Proposition \ref{aprop:pb_non_trivial} in the appendix.
\end{proof}

Henceforth, we will assume that $\nu$ has a finite $d$-th moment. As for the discrete model, we introduce the quantities
\[
\theta_r(\lambda) := \mathbb{P}_\lambda[0 \leftrightarrow S_r]\quad\text{ and }\quad \theta(\lambda) := \mathbb{P}_\lambda[0\leftrightarrow \infty] = \lim_{r\to+\infty} \theta_r(\lambda)
\]
where $r\in\mathbb{R}_+$. The critical point of percolation is then given by
\[
\lambda_c := \inf \{\lambda > 0\;\vert\; \theta(\lambda) > 0\}.
\]
Unfortunately, it will not be possible in general to show exponential decay of $\theta_r$ in the subcritical regime $\lambda < \lambda_c$. Moreover, the related quantity
\[
\widehat{\lambda}_c := \inf\{\lambda > 0\;\vert\; \liminf_{r \to +\infty} \mathbb{P}_\lambda[S_r\leftrightarrow S_{2r}] > 0\}
\]
may differ from $\lambda_c$. In general, we only have the inequality
\[
\widehat{\lambda}_c \leq \lambda_c.
\]
The OSSS method applied to the Poisson-Boolean model will eventually prove that $\widehat{\lambda}_c = \lambda_c$ for a large class of radii distributions $\nu$.

As before, we need to show that the model is not trivial. This means $\lambda_c,\widehat{\lambda}_c \in (0,+\infty)$. As we will need it later on, we will directly add a more subtle statement.

\begin{theo}\label{theo:open_set}
If $d\geq 2$, then $0 < \widehat{\lambda}_c \leq {\lambda}_c < +\infty$. Furthermore, the set
\[
\{ \lambda > 0\;\vert\; \liminf_{r \to +\infty} \mathbb{P}_\lambda[S_r\leftrightarrow S_{2r}] = 0\}
\]
is non empty and open.
\end{theo}

The proof is non trivial and we will only prove a small part. The complete proof is very technical and does not give further insights. Thus, I refer to \cite[Appendix A]{GT17} for the missing proof. For the rest of Section \ref{ssec:poisson-bool}, we will assume $d \geq 2$.

\begin{lem}
It holds ${\lambda}_c < +\infty$.
\end{lem}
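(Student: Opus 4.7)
The plan is to reduce the question to supercritical Bernoulli site percolation on $\mathbb{Z}^d$ via a standard renormalisation argument, relying on the classical fact that $p_c^{\mathrm{site}}(\mathbb{Z}^d) < 1$ for $d\geq 2$. The idea is to tile $\mathbb{R}^d$ by small cubes and to declare a cube \emph{good} when it contains the centre of a ball whose radius exceeds a fixed threshold; for $\lambda$ large the density of good cubes will be close to $1$, and the geometry will be arranged so that overlapping occupied balls propagate connectivity between face-adjacent good cubes.

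More concretely, I would first pick $r_0 > 0$ with $\nu([r_0,\infty)) > 0$, which exists because $\nu$ is a probability measure on $\mathbb{R}_+^*$. Next I would set $\ell := r_0/\sqrt{d+3}$ and partition $\mathbb{R}^d$ into cubes $C_x := x + [-\ell/2,\ell/2)^d$ indexed by $x \in \ell\mathbb{Z}^d$, declaring $C_x$ good whenever $\eta \cap (C_x\times [r_0,\infty)) \neq \emptyset$. The choice of $\ell$ gives two geometric facts: any point of $C_0$ lies at distance at most $\ell\sqrt{d}/2 < r_0$ from the origin, and any two points in face-adjacent cubes are at distance at most $\ell\sqrt{d+3} = r_0$. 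Consequently, if $C_0$ is good with witness $(z_0,r_{z_0})$ then $0\in B(z_0,r_{z_0})$, and for any two face-adjacent good cubes with witnesses $(z_x,r_x)$ and $(z_y,r_y)$ the balls $B(z_x,r_x)$ and $B(z_y,r_y)$ overlap, since $|z_x - z_y|\leq r_0 \leq r_x + r_y$.

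Then I would observe that the goodness events are independent across cubes, since they depend on disjoint regions of $\eta$, and each has the same probability $p(\lambda) := 1 - \exp\bigl(-\lambda\, \ell^d\, \nu([r_0,\infty))\bigr)$, which tends to $1$ as $\lambda \to +\infty$. Taking $\lambda$ large enough that $p(\lambda) > p_c^{\mathrm{site}}(\mathbb{Z}^d)$, the good cubes form a supercritical Bernoulli site percolation on $\mathbb{Z}^d$, so by translation invariance $\mathbb{P}_\lambda[C_0 \text{ lies in an infinite cluster of good cubes}] > 0$. On this event, the geometric observations above yield an infinite chain of pairwise overlapping occupied balls whose first element contains $0$, so $0\leftrightarrow\infty$ in $\mathcal{O}(\eta)$. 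Hence $\theta(\lambda) > 0$, and therefore $\lambda_c < +\infty$.

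The main point requiring care is the geometric calibration of $\ell$ with respect to $r_0$, which must simultaneously secure origin-coverage in any good $C_0$ and ball-overlap between witnesses of face-adjacent good cubes. Beyond that the proof uses only two standard black boxes: the independence of Poisson-process events on disjoint regions, and the classical bound $p_c^{\mathrm{site}}(\mathbb{Z}^d) < 1$ for $d\geq 2$.
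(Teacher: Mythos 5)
Your proof is correct and follows essentially the same route as the paper: reduce to supercritical Bernoulli site percolation on a rescaled lattice by declaring a cube good when it contains the centre of a sufficiently large ball, and invoke $p_c^{\mathrm{site}}(\mathbb{Z}^d)<1$. In fact your explicit calibration $\ell = r_0/\sqrt{d+3}$, which guarantees both that a good $C_0$ covers the origin and that witnesses of face-adjacent good cubes overlap, makes rigorous a geometric step that the paper's version (cubes of side $\epsilon$ with balls of radius $\epsilon$) leaves implicit.
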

\begin{proof}
Let $\epsilon > 0$ such that $c := \nu([\epsilon,+\infty)) > 0$. Then, consider the point measure $\nu' := c\delta_\epsilon$ and the induced Poisson point process $\eta'$ on $\mathbb{R}^d\times\mathbb{R}_+$ with intensity $\lambda\mathrm{d}z\otimes\nu'$. In other words, we shrink all balls with radius greater or equal $\epsilon$ to balls with radius $\epsilon$ and we delete all balls with smaller radii. Let $P_\lambda$ denote the associated probability measure. Clearly,
\[
\mathbb{P}_\lambda [S_r \leftrightarrow S_{2r}] \geq P_\lambda(S_r \leftrightarrow S_{2r}).
\]
Hence, it suffices to show that the critical intensity ${\lambda}_c(\nu')$ is finite. Now, we consider the net $\epsilon\mathbb{Z}^d$. We say that the site $x\in\epsilon\mathbb{Z}^d$ is occupied if the square $S_x := x + [-\epsilon/2,\epsilon/2)^d$ contains the center of a ball from the modified Poisson point process $\eta'$. This gives a site percolation model\footnote{For a discussion of the difference between site and bond percolation, refer to \cite[Section 1.6]{G99}.} with parameter 
\[
p := P(\vert \eta'\cap S_x\vert \neq 0) = 1 - \exp\left(-\lambda c\epsilon^d\right).
\]
As for the bond percolation model, the critical parameter of the site percolation model is strictly inferior to 1. Hence, by choosing $\lambda$ sufficiently large, we obtain percolation in the above model. But if the site percolation model percolates, then
\[
P(0\leftrightarrow \infty) > 0.
\]
\end{proof}

Now that we have the basic properties of our model, we need to develop the two basic techniques in percolation theory.

\subsection{The FKG inequality and Russo's formula}

Since the ideas and the formulations are quite similar to the discrete model, this section is very short and contains only the formulations and the proofs of the FKG inequality and Russo's formula. Note that more general formulations exist.

\begin{defin}
We say that an event $A$ is \emph{increasing}, if its probability increases when we add balls to the Poisson point process. More formally: if $\eta\in A$ and $\eta\subseteq\eta'$, then $\eta'\in A$.
\end{defin}

The idea of the proof of the FKG inequality in the Poisson-Boolean case is to use a discrete version of the inequality. Unfortunately, the version we used for the discrete bond percolation does not suffice in this case. Since the proof of the following version does not differ much from the first one, we will omit it.

\begin{lem}[Generalised discrete FKG inequality]
Consider $p_0,p_1,\ldots \in [0,1]$ such that $\sum_j p_j = 1$. Now, let $\Omega := \mathbb{N}^{\mathbb{Z}^d}$ endowed with the product measure $P$ satisfying $P(\text{site $x$ has value $j$}) = p_j$ for all $x\in\mathbb{Z}^d$ and all $j\in\mathbb{N}$. If $X,Y:\Omega\rightarrow\mathbb{R}$ are two bounded and increasing random variables, then
\[
E(XY) \geq E(X)\cdot E(Y),
\]
where $E$ denotes the expectation with respect to $P$.
\end{lem}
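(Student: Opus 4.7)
The plan is to mirror the three-step proof of the Bernoulli FKG inequality given in Section 2.2, adapted to the new state space $\mathbb{N}^{\mathbb{Z}^d}$. First I would reduce to the case where $X$ and $Y$ depend on only finitely many sites via martingale convergence; then I would prove that finite-dimensional statement by induction on the number of relevant sites; the base case of a single site reduces to a discrete rearrangement identity. Everything carries over almost verbatim; the only genuinely new ingredient is the single-site base case, which now concerns infinitely many values instead of just two.

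For the base case, suppose $X$ and $Y$ depend only on the value $\omega_x \in \mathbb{N}$ at one fixed site $x$, so we may view them as bounded non-decreasing functions $\mathbb{N}\to\mathbb{R}$. The key identity is the symmetrisation
\[
2\bigl(E(XY) - E(X)\,E(Y)\bigr) = \sum_{i,j\in\mathbb{N}} p_i p_j \bigl(X(i)-X(j)\bigr)\bigl(Y(i)-Y(j)\bigr),
\]
obtained by expanding both sides (the double sum is absolutely convergent because $X$ and $Y$ are bounded and $\sum_j p_j = 1$). Since $X$ and $Y$ are non-decreasing, the two factors $X(i)-X(j)$ and $Y(i)-Y(j)$ share a common sign for every pair $(i,j)$, so the summand is non-negative and the identity gives the desired inequality. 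This is the main obstacle in the sense that it is the only place the argument differs in spirit from the Bernoulli version, but boundedness makes everything legitimate.

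For the induction step, assuming the inequality whenever the two functions depend on at most $n-1$ sites, I would consider $X, Y$ depending on sites $x_1,\dots,x_n$ and condition on $\omega_{x_1},\dots,\omega_{x_{n-1}}$. The conditional expectations $E(X\mid \omega_{x_1},\dots,\omega_{x_{n-1}})$ and $E(Y\mid \omega_{x_1},\dots,\omega_{x_{n-1}})$ remain bounded and non-decreasing in their arguments, and the inner conditional expectation $E(XY\mid\omega_{x_1},\dots,\omega_{x_{n-1}})$ reduces (site by site) to an application of the single-site base case on the remaining coordinate $\omega_{x_n}$. Taking the outer expectation and invoking the induction hypothesis on the $(n-1)$-dependent conditional expectations closes the step, exactly as in the Bernoulli proof.

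For a general pair of bounded increasing $X$ and $Y$, I would fix an enumeration $(y_k)_{k\geq 1}$ of $\mathbb{Z}^d$ and define the two martingales $X_k := E(X\mid \omega_{y_1},\dots,\omega_{y_k})$ and $Y_k := E(Y\mid \omega_{y_1},\dots,\omega_{y_k})$. Each $X_k$ and $Y_k$ depends on only $k$ sites and inherits boundedness and monotonicity, so the finite-dimensional case yields $E(X_kY_k)\geq E(X_k)E(Y_k)$. The $L^2$ martingale convergence theorem gives $X_k\to X$ and $Y_k\to Y$ in $L^2$, whence $X_kY_k\to XY$ in $L^1$ by Cauchy--Schwarz, and passing to the limit yields the inequality. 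The rest of this step is essentially bookkeeping identical to the Bernoulli argument.
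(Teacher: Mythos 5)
Your proposal is correct and follows exactly the route the paper intends: it declares that the proof "does not differ much" from the Bernoulli FKG inequality proved earlier, and your three-step adaptation (symmetrisation identity for the single-site base case, induction via conditioning, martingale convergence for infinitely many sites) is precisely that adaptation, with the only new ingredient — the countable rearrangement identity $2\bigl(E(XY)-E(X)E(Y)\bigr)=\sum_{i,j}p_ip_j\bigl(X(i)-X(j)\bigr)\bigl(Y(i)-Y(j)\bigr)$ — handled correctly.
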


Let us prove now the FKG inequality in the Poisson-Boolean setting.

\begin{theo}[FKG inequality]
Let $A$ and $B$ be two increasing events. Then
\[
\mathbb{P}_\lambda[A\cap B] \geq \mathbb{P}_\lambda[A]\cdot\mathbb{P}_\lambda[B]
\]
for all $\lambda > 0$.
\end{theo}
\begin{proof}
The proof is constructed on a discretisation argument. First, fix $\epsilon > 0$ and consider the lattice $\mathbb{L} := \epsilon(\mathbb{Z}^d\times\mathbb{N})$. For every site $x\in\mathbb{L}$, we consider the induced square $S_x := x + [0,\epsilon)^d$. Then, the random variables $N_x^\epsilon := \vert \eta \cap S_x\vert$ for $x\in\mathbb{L}$ are independent. Denote by $\mathcal{F}_\epsilon$ the $\sigma$-algebra induced by the family $(N_x^\epsilon)_{x\in\mathbb{L}}$. Now, let $f_\epsilon := \mathbb{E}_\lambda [1_A\;\vert\; \mathcal{F}_\epsilon]$ and $g_\epsilon := \mathbb{E}_\lambda [1_B\;\vert\; \mathcal{F}_\epsilon]$. These are increasing functions depending only on $(N_x^\epsilon)_x$, hence
\[
\mathbb{E}_\lambda[ f_\epsilon g_\epsilon] \geq \mathbb{E}_\lambda [f_\epsilon]\cdot\mathbb{E}_\lambda [g_\epsilon] = \mathbb{P}_\lambda[A]\cdot \mathbb{P}_\lambda[B].
\]
Since the $\sigma$-algebra $\mathcal{F}_\epsilon$ converges for $\epsilon \downarrow 0$ upwards to the $\sigma$-algebra associated to the Poisson point process $\eta$, the left hand side converges by the martingale convergence theorem to $\mathbb{E}_\lambda [1_A1_B] = \mathbb{P}_\lambda[ A\cap B]$.
\end{proof}

Now, we only need to introduce Russo's formula for the Poisson-Boolean model.

\begin{defin}[Local events]
An event $A$ is called \emph{local}, if there exists a compact set $K\subseteq\mathbb{R}^d$ such that $A$ is $\Xi(K)$-measurable, where $\Xi(K)\subseteq\eta$ is the set of balls intersecting $K$, i.e.~ $\Xi(K) := \{(x,r_x)\in\eta\;\vert\; K\cap B_r^x\neq\emptyset\}$. In other words: the event only depends on balls that touch the compact $K$.
\end{defin}

\begin{theo}[Russo's formula]
Let $A$ be an increasing local event and define for $x\in\mathbb{Z}^d$ the random variable
\[
\mathrm{Piv}_{x,A} := 1_{\eta\not\in A}\int_{S_x}\int_{\mathbb{R}_+} 1_{\eta\cup\{(z,r)\}\in A} \d{\nu(r)}\d{z},
\]
where $S_x := x + [0,1)^d$ is the box induced by $x$. Then
\[
\dfrac{d}{d\lambda}\mathbb{P}_\lambda[A] = \mathbb{E}_\lambda\left[1_{\eta\not\in A}\int_{\mathbb{R}^d}\int_{\mathbb{R}_+} 1_{\eta\cup\{(z,r)\}\in A} \d{\nu(r)}\d{z}\right] = \sum_{x\in\mathbb{Z}^d} \mathbb{E}_\lambda[\mathrm{Piv}_{x,A}].
\]
\end{theo}
\begin{proof}
Let $\delta > 0$ be small enough. Then, consider two Poisson point process $\eta_\lambda$ and $\eta_\delta$ with intensity $\lambda\mathrm{d}z\otimes\nu$ and $\delta\mathrm{d}z\otimes\nu$ respectively. Write $P$ and $E$ for the joint probability measure and the associated expectation operator. Then
\[
\mathbb{P}_\lambda[A] = P(\eta_\lambda\in A)\quad\text{ and }\quad\mathbb{P}_{\lambda+\delta}[A] = P(\eta_\lambda\cup\eta_\delta\in A).
\]
Now, consider $\delta$ small enough such that $A$ only depends on the balls touching the compact ball $K_\delta := B(0, \delta^{-1/(3d)})$. We will soon use Mecke's formula (cf. Theorem \ref{theo:mecke}). To clarify how exactly we use it, we need to introduce some notation. Let $N_\delta$ be the (measurable) map sending a PPP to  the number of its balls touching $K_\delta$, i.e.
\[
N_\delta(\eta) := \vert \{(x, r_x)\in\eta\;\vert\; K_\delta\cap B(x,r_x)\neq\emptyset \}\vert.
\]
Furthermore, let $\Psi$ be the (measurable) map sending a PPP on the ball closest to the origin with respect to the Hausdorff distance. First, note that
\begin{align*}
P(N_\delta(\eta_\delta) \geq 2) &= 1 - \left(1 + \delta\int_{\mathbb{R}_+} (r + \delta^{-1/(3d)})^d\d{\nu(r)}\right)e^{-\delta\int_{\mathbb{R}_+} v_d\left(r + \delta^{-1/(3d)}\right)^d\d{\nu(r)}}\\
&\leq \left(\delta\cdot\int_{\mathbb{R}_+} v_d\left(r + \delta^{-1/(3d)}\right)^d\d{\nu(r)}\right)^2 \\
&= O\left(\left(O(\delta) + O(\delta^{2/3})\right)^2\right) = o(\delta),
\end{align*}
where $v_d$ is the volume of the $d$-dimensional unit ball. Thus, we may write 
\begin{align*}
\mathbb{P}_{\lambda+\delta}[A] - \mathbb{P}_\lambda[A] &= P(\eta_\lambda\not\in A\text{ and }\eta_\lambda\cup\eta_\delta\in A)\\
&= P(\eta_\lambda\not\in A, \eta_\lambda\cup\eta_\delta\in A\text{ and } N_\delta(\eta_\delta) \leq 1) + o\left(\delta\right)\\
&= P(\eta_\lambda\not\in A, \eta_\lambda\cup\eta_\delta\in A \text{ and } N_\delta(\eta_\delta) = 1) + o(\delta).
\end{align*}
Let us have a closer look at the first term. By writing
\[
\Phi(\eta_\lambda,\eta_\delta, z, r) := 1_{\eta_\lambda \cup \Psi(\eta_\delta)\in A}\cdot 1_{N_\delta(\eta_\delta) = 1}\cdot 1_{(z,r)\in S},
\]
with $S = \left\{(z,r)\;\vert\; r > 0, \vert z\vert \leq r + \delta^{-1/(3d)}\right\}$, we get, using $\eta_\delta(S) = N_\delta(\eta_\delta)$,
\begin{align*}
\mathbb{P}_{\lambda+\delta}[A] - \mathbb{P}_\lambda[A] &= E\left( 1_{\eta_\lambda\not\in A}\cdot E\left(\left.\int_S \Phi(\eta_\lambda, \eta_\delta, z, r) \d{\eta_d(z,r)}\;\right\vert\; \eta_\lambda\right)\right),
\end{align*}
where we used that, on the event $\{N_\delta(\eta_\delta) = 1\}$, only the ball closest to the origin can influence $A$. Now, $E(\cdot\;\vert\;\eta_\lambda)$ is simply the expectation with respect to $\eta_\delta$, which is a PPP. Hence, we may apply Mecke's formula (cf \ref{theo:mecke}):
\begin{align*}
E\left(\left.\int_S \Phi(\eta_\lambda,\eta_\delta, z, r) \d{\eta_\delta(z,r)}\;\right\vert\; \eta_\lambda\right) &= E\left(\left.\int_S \Phi(\eta_\lambda, \eta_\delta + \delta_{(z,r)}, z, r) \d{(\delta\mathrm{d}z\otimes\nu)(z,r)}\;\right\vert\;\eta_\lambda\right).
\end{align*}
Now, since $\Phi(\eta_\lambda, \eta_\delta + \delta_{(z,r)}, z, r)$ is nonzero only if $N_\delta(\eta_\delta) = 0$, this yields
\begin{align*}
E\left(\delta\left.\int_{\mathbb{R}_+} \int_{B(0,r+\delta^{-1/(3d)})} 1_{\eta_\lambda\cup\{(z,r)\}\in A}\cdot 1_{N_\delta(\eta_\delta) = 0} \d{z}\d{\nu(r)} \;\right\vert\;\eta_\lambda\right)
\end{align*}
for the above, and thus
\begin{align*}
\mathbb{P}_{\lambda+\delta}[A] - \mathbb{P}_\lambda[A] &= \delta\cdot E\left(1_{N_\delta(\eta_\delta) = 0}\cdot 1_{\eta_\lambda\not\in A}\cdot \int_{\mathbb{R}_+}\int_{B(0,r+\delta^{-1/(3d)})} 1_{\eta_\lambda \cup \{(z,r)\}\in A} \d{z}\d{\nu(r)}\right).
\end{align*}
Now, the integral part converges monotonously to the integral over the entire space. Furthermore, 
\[
P(N_\delta(\eta_\delta) = 0) = e^{-\delta\int_{\mathbb{R}_+} v_d(r + \delta^{-1/(3d)})^d\d{\nu(r)}} \underset{\delta\downarrow 0}{\longrightarrow} 1,
\]
hence the corresponding indicator function converges almost surely. We conclude by dominated convergence that
\begin{align*}
\lim_{\delta\downarrow 0} \dfrac{\mathbb{P}_{\lambda+\delta}[A] - \mathbb{P}_\lambda[A]}{\delta} &= E\left(1_{\eta_\lambda\not\in A}\int_{\mathbb{R}^d}\int_{\mathbb{R}_+} 1_{\eta_\lambda\cup\{(z,r)\}\in A} \d{\nu(r)}\d{z}\right)\\
& = \mathbb{E}_\lambda\left[1_{\eta_\lambda\not\in A}\int_{\mathbb{R}^d}\int_{\mathbb{R}_+} 1_{\eta_\lambda\cup\{(z,r)\}\in A} \d{\nu(r)}\d{z}\right].
\end{align*}
The left limit is obtained analogously.
\end{proof}

\subsection{Applying the OSSS Method to Poisson-Boolean Percolation}

The overall method is identical to the one we used in the discrete case. With Russo's formula, we obtain a differential inequality which leads to the result. The important difference is that we will not get the same strong result as in the discrete case. It has already been shown that the subcritical phase does generally not exhibit an exponential decay. As mentioned before, we will attack the very different problem of proving $\lambda_c = \widehat{\lambda}_c$. As before, we will always assume $d \geq 2$.

\begin{theo}[Duminil-Copin, Raoufi, Tassion; 2018]\label{theo:DRT18bool}
Assume
\begin{equation}\label{eq:moment_assumption}
\int_{\mathbb{R}_+} r^{5d - 2} \d{\nu(r)}< +\infty.
\end{equation}
Then, we have that $\lambda_c = \widehat{\lambda}_c$. Furthermore, there exists $c > 0$ such that $\theta(\lambda) \geq c(\lambda - \lambda_c)$ for all $\lambda \geq \lambda_c$.
\end{theo}

The proof if much more complex than in the discrete case, because we have to cope with a long range dependence in the Poisson-Boolean model. I choose to present the intermediate results in an order different from the one in the original article. This gives me the opportunity to give more detailed insights in the ideas behind the proof. Before we begin the proof of the theorem though, we need an analogous lemma to Lemma \ref{lem:diff_ineq}. Since the same arguments apply \emph{mutatis mutandis} to this variant, we will omit the proof.

\begin{lem}\label{lem:diff_ineq_cont}
If there exists a constant $c > 0$ such that, for all $r \geq 0$ and $\lambda\geq \widehat{\lambda}_c$,
\begin{equation}\label{eq:diff_ineq_cont}
\theta_r' \geq c\dfrac{r}{\Sigma_r}\theta_r(1-\theta_r),
\end{equation}
where $\Sigma_r := \int_0^r \theta_s\d{s}$, then for every $\lambda_0 > \widehat{\lambda}_c$, there exists a $\lambda_1\in [\widehat{\lambda}_c,\lambda_0]$ such that:
\begin{enumerate}
	\item For any $\lambda \in (\widehat{\lambda}_c,\lambda_1)$, there exists $c_\lambda > 0$ such that
	\[
	\theta_r \leq \exp(-c_\lambda r).
	\]
	\item For all $\lambda \in [\lambda_1,\lambda_0]$, one has $\theta(\lambda) \geq c(\lambda - \lambda_1)$.
\end{enumerate}
\end{lem}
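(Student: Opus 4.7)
The plan is to replicate the two-part structure of the proof of Lemma \ref{lem:diff_ineq}, replacing sums with integrals; the only genuinely new ingredient is the factor $(1-\theta_r)$ in \eqref{eq:diff_ineq_cont}, which will be controlled by bounding $\theta_r$ uniformly away from $1$ on the compact parameter window $[\widehat\lambda_c,\lambda_0]$. As before, by rescaling I may take $c=1$, and I set
$$\lambda_1 := \inf\left\{\lambda\in[\widehat\lambda_c,\lambda_0]\,:\,\limsup_{r\to\infty}\frac{\ln \Sigma_r(\lambda)}{\ln r}\geq 1\right\},$$
which is the continuous analogue of $\beta_c$. Monotonicity of $\theta_s$, and thus of $\Sigma_r$, in $\lambda$ (by coupling) ensures $\lambda_1$ is well defined.

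For part (1), I fix $\lambda<\lambda_1$ and $\delta>0$ with $\lambda+2\delta<\lambda_1$. By definition of $\lambda_1$ and the monotonicity in $\lambda$, there is $\alpha>0$ such that $\Sigma_r(\gamma)\leq r^{1-\alpha}$ for every $\gamma\in[0,\lambda+2\delta]$ and every sufficiently large $r$. I rewrite \eqref{eq:diff_ineq_cont} as $\bigl(\ln(\theta_r/(1-\theta_r))\bigr)'\geq r/\Sigma_r\geq r^{\alpha}$ and integrate in $\lambda$ between $\lambda+\delta$ and $\lambda+2\delta$. Because $\theta(\lambda_0)<1$ (which follows from the positive probability, under \eqref{eq:moment_assumption}, that no ball of $\eta$ covers the origin), the quantity $1-\theta_r(\lambda+2\delta)$ is bounded below uniformly in $r$ for $r$ large, yielding $\theta_r(\lambda+\delta)\leq C\exp(-\delta r^\alpha)$. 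This makes $\Sigma_r(\lambda+\delta)$ uniformly bounded by some $\Sigma<\infty$; feeding this back into \eqref{eq:diff_ineq_cont} and integrating once more between $\lambda$ and $\lambda+\delta$ gives the desired exponential decay of $\theta_r(\lambda)$.

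For part (2), I introduce the continuous Cesaro-type average
$$T_r(\lambda) := \frac{1}{\ln r}\int_1^r \frac{\theta_s(\lambda)}{s}\,ds,\qquad r>1,$$
which converges pointwise to $\theta(\lambda)$ since $\frac{1}{\ln r}\int_1^r s^{-1}\,ds=1$. Differentiating in $\lambda$, inserting \eqref{eq:diff_ineq_cont}, and using the uniform lower bound $c_0 := 1-\theta(\lambda_0)>0$ on $(1-\theta_s)$, I obtain
$$T_r'(\lambda)\geq\frac{1}{\ln r}\int_1^r\frac{\theta_s(1-\theta_s)}{\Sigma_s}\,ds\geq\frac{c_0}{\ln r}\int_1^r(\ln \Sigma_s)'\,ds=\frac{c_0(\ln \Sigma_r-\ln \Sigma_1)}{\ln r},$$
using $\Sigma_s'=\theta_s$. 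Integrating this inequality from some $\lambda'\in(\lambda_1,\lambda)$ up to $\lambda$ and letting $r\to\infty$, the definition of $\lambda_1$ gives $\theta(\lambda)\geq c_0(\lambda-\lambda')$; letting $\lambda'\downarrow\lambda_1$ completes the argument. The main obstacle is precisely the $(1-\theta_r)$ factor, which is why the proof must be localised to a compact window $[\widehat\lambda_c,\lambda_0]$ (where $\theta_r$ stays bounded away from $1$ for large $r$) rather than performed globally; this is also what explains the auxiliary parameters $\lambda_0$ and $\lambda_1$ in the statement instead of working directly with $\lambda_c$ as in Lemma \ref{lem:diff_ineq}.
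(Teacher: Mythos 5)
Your proof is essentially the paper's intended argument: the paper omits the proof of this lemma, stating only that the proof of Lemma \ref{lem:diff_ineq} applies \emph{mutatis mutandis}, and your adaptation (sums to integrals, $T_n$ to a logarithmic average, and the new factor $(1-\theta_r)$ absorbed by writing the left side as $\bigl(\ln\frac{\theta_r}{1-\theta_r}\bigr)'$ in part (1) and by a uniform lower bound on $1-\theta_s$ in part (2)) is exactly that. One small correction: the constant you name in part (2), $c_0 = 1-\theta(\lambda_0)$, is \emph{not} a lower bound for $1-\theta_s(\lambda)$, since $\theta_s \geq \theta$ and $\theta_s(\lambda)$ may well exceed $\theta(\lambda_0)$ for moderate $s$; the correct uniform bound is the one you yourself invoke in part (1), namely $1-\theta_s(\lambda) \geq \mathbb{P}_{\lambda_0}[0\notin\mathcal{O}(\eta)] = \exp\bigl(-\lambda_0 v_d\int \rho^d\,\mathrm{d}\nu(\rho)\bigr) > 0$ for all $s>0$ and $\lambda\le\lambda_0$ (equivalently $1-\theta_1(\lambda_0)>0$ for $s\ge 1$), which is finite and positive precisely because $\nu$ has a finite $d$-th moment. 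With that substitution the argument goes through unchanged, at the harmless cost of a multiplicative constant in the linear lower bound of part (2).
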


Note that if the conditions of the lemma are satisfied, then $\lambda_c = \widehat{\lambda}_c$. Indeed suppose $\lambda_0 := \lambda_c > \widehat{\lambda}_c$. Then, for every $\lambda \in (\widehat{\lambda}_c,\lambda_c)$,
\[
\mathbb{P}_\lambda[S_r\leftrightarrow S_{2r}] \leq \theta_{r/2} \leq \exp(-c_\lambda r/2) \underset{r\to+\infty}{\longrightarrow} 0,
\]
which contradicts the definition of $\widehat{\lambda}_c$. Hence, it suffices to prove that the differential inequality \eqref{eq:diff_ineq_cont} is satisfied.

\begin{proof}[Proof of Theorem \ref{theo:DRT18bool}]
To use the OSSS inequality, we need to describe the PPP as a product space. We will do this by writing our space $\mathbb{R}^d$ as the disjoint union $\mathbb{R}^d = \bigsqcup_{x\in\mathbb{Z}^d} S_x$, where $S_x$ is the box $x + [0,1)^d$. Then write $\eta_{(x,n)} := \eta\cap (S_x\times [n,n+1))$ for all $x\in\mathbb{Z}^d$ and $n\in\mathbb{N}$. These sections of $\eta$ are all independent, because the considered sets are pairwise disjoint. 

Note that we only proved the weak version of the OSSS inequality for a \emph{finite} product of probability spaces. Hence, take $L \in\mathbb{N}$ and write
\[
I_L := \{ (x,n)\in\mathbb{Z}^d\times\mathbb{N}\;\vert\; \Vert x\Vert\leq L \text{ and }n \leq L\}.
\]
We integrate all the other indices in one additional space by writing $\eta_{other}$ for the union of all the $\eta_i$ with $i\not\in I_L$. This induces the product space $\Omega = \left(\prod_{i\in I_L} \Omega_i\right)\times \Omega_{other}$ and the corresponding product measure.

To simplify the reading, we will write $A := \{0\leftrightarrow \partial B_r\}$. For $f := 1_A$, the OSSS inequality then implies
\[
\theta_r(1-\theta_r) \leq \sum_{i\in I_L} \delta_i(T)\mathrm{Inf}_i(f) + \delta_{other}(T)\mathrm{Inf}_{other}(f),
\]
where $T$ is an algorithm determining $f$. Since
\[
\mathrm{Inf}_{other}(f) \leq \mathbb{P}_\lambda[\exists (z,R)\in\eta,\; R > L\text{ and } B_R^z\cap B_r \neq \emptyset] \underset{L\to+\infty}{\longrightarrow} 0.
\]
Using $\delta_{other}(T) \leq 1$, we may extend the OSSS inequality in this context to the infinite index set $\mathbb{Z}^d\times\mathbb{N}$:
\[
\theta_r(1-\theta_r) \leq \sum_{x\in\mathbb{Z}^d} \sum_{n\in\mathbb{N}} \delta_{(x,n)}(T)\mathrm{Inf}_{(x,n)}(f).
\]
The next step will be to compare the right hand side to the derivate of $\theta_r$. For this, we will relate the influence of a cell $S_x\times [n,n+1)$ to the quantity $\mathbb{E}_\lambda[\mathrm{Piv}_{x,A}]$. First, note that
\begin{align*}
\mathrm{Inf}_{x,n}(f) &= \mathbb{P}_\lambda[1_A(\eta) \neq 1_A(\widetilde{\eta})] \leq 2\mathbb{P}_\lambda[\eta\not\in A,\widetilde{\eta}\in A],
\end{align*}
where $\widetilde{\eta}$ is equal to $\eta$ except for the cell $(x,n)$ where it is resampled independently. The event on the right hand side implies that $\widetilde{\eta}\cap  S_x\times [n,n+1)$ contains at least one point. For $\eta$ not to occur, this means that the event $\mathcal{P}_x(n+1+\sqrt{\delta})$ has to occur for $\eta$, where
\[
\mathcal{P}_x(n) := \{0 \leftrightarrow B_n^x\}\cap \{B_n^x\leftrightarrow \partial B_r\}\cap \{0\not\leftrightarrow \partial B_r\}.
\]
(See Figure \ref{fig:P(n)} for a visualisation.)

\begin{figure}[ht]
	\centering
	\includegraphics[width=0.7\textwidth]{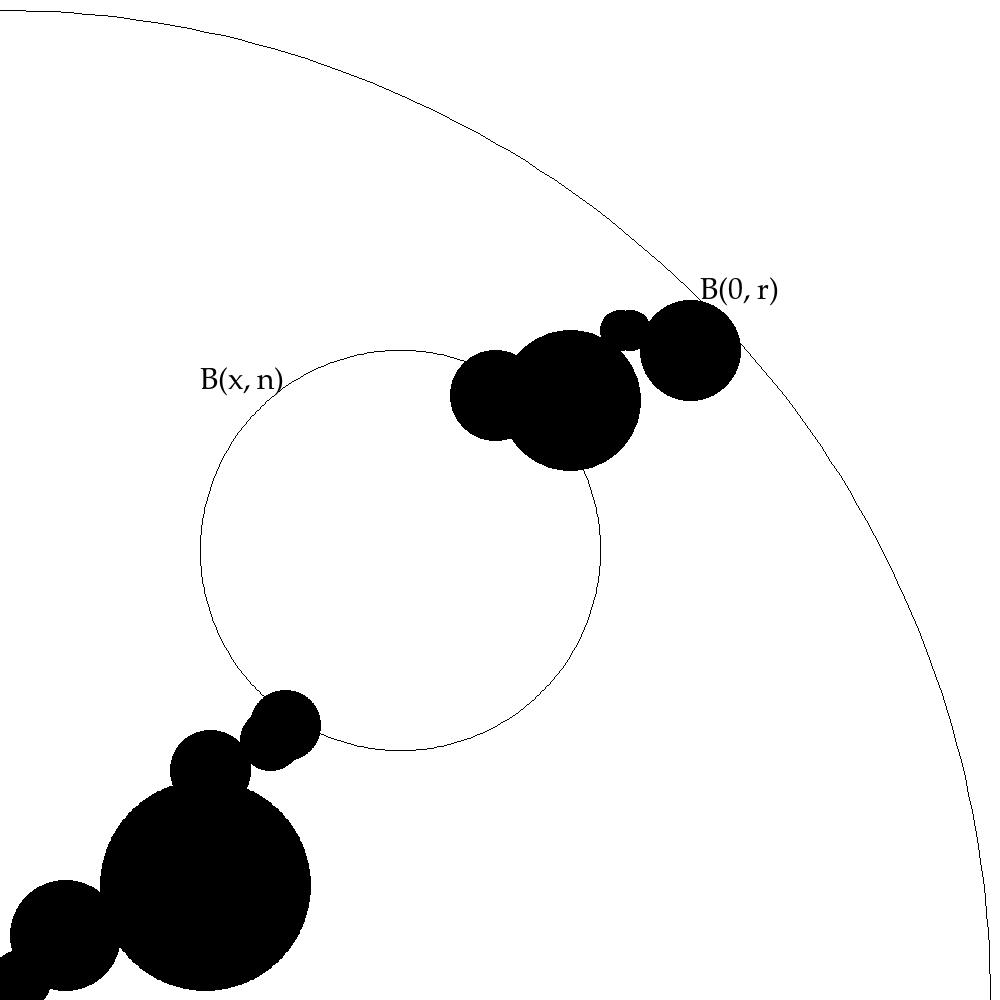}
	\caption{Visualisation of the event $\mathcal{P}(n)$}
	\label{fig:P(n)}
\end{figure}

This means that
\begin{align*}
\mathrm{Inf}_{(x,n)}(f) &\leq 2\mathbb{P}_\lambda[\vert\widetilde{\eta}\cap (S_x\times[n,n+1))\vert \geq 1]\cdot\mathbb{P}_\lambda[\eta\in\mathcal{P}_x(n+1+\sqrt{d})] \\
&\leq 2\lambda\cdot\nu[n,n+1)\cdot\mathbb{P}_\lambda[\mathcal{P}_x(n+1+\sqrt{d})]
\end{align*}
by independence.

The problem with this bound is that it still depends in an implicit way on $n$. First, let us try to relate the events defined by $\mathcal{P}_x$ to $\mathrm{Piv}_{x,A}$. Let $r_* > 0$ and $r^* > r_*$ be such that the following "insertion tolerance property"
\[
c_{IT} := c_{IT}(\lambda) := \mathbb{P}_\lambda[\mathcal{D}_x] > 0
\] 
is satisfied for some and thus for all $x\in\mathbb{Z}^d$, where
\[
\mathcal{D}_x := \{\;\exists (z,R)\in\eta,\quad z\in S_x\text{ and } B_{r_*}^x\subseteq B_R^z \subseteq B_{r^*}^x\;\}.
\]
In other words, it is possible to find a ball near $x$ which contains a small ball around $x$, but which is not too big (cf. Figure \ref{fig:D_x}). 

\begin{figure}[ht]\centering
\begin{tikzpicture}[scale=0.8]
	
	\draw[color=cyan!40, fill=cyan!40] (0.4, 0.6) circle (4);

	\draw (0,0) node{$\bullet$};
	\draw (0,0) node[below right]{$x$} ;
	\draw (1,1) -- ++(0,-2) -- ++(-2,0) -- ++(0,2) -- (1,1);
	\draw (0,0) circle (2);
	\draw (0,0) circle (7);
	
	\draw (0.4, 0.6) node{$\bullet$};
	\draw (0.4, 0.6) node[left]{$(z,R)$};
	
	\draw (45 : 1.9) node[above right]{$B_{r_*}^x$};
	\draw (45 : 4.5) node[above right]{$B_R^z$};
	\draw (45 : 6.9) node[above right]{$B_{r^*}^x$};

\end{tikzpicture}
\caption{Visualisation of the event $\mathcal{D}_x$}
\label{fig:D_x}
\end{figure}
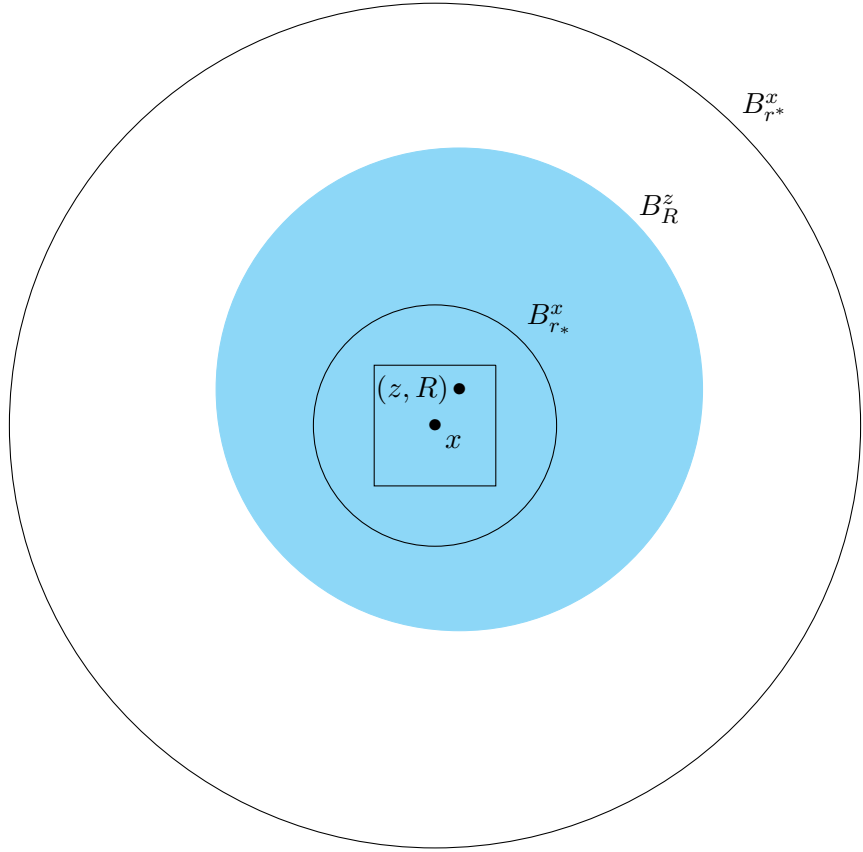


Denote by $\widetilde{\eta}$ an independent PPP with same intensity $\lambda\mathrm{d}z\otimes\nu$ on $\mathbb{R}^d\times\mathbb{R}_+$. If $P$ denotes the joint probability measure and if $(\zeta,R)$ denotes on $\mathcal{D}_x$ the closest point to the origin which satisfies $\mathcal{D}_x$, then
\begin{align*}
c_{IT}\mathbb{P}_\lambda[\mathcal{P}_x(r_*)] &= P(\widetilde{\eta}\in\mathcal{D}_x\text{ and } \eta\in \mathcal{P}_x(r_*))\\
&\leq P(\eta\not\in A\text{ and } \eta\cup\{(\zeta,R)\}\in A, \widetilde{\eta}\in\mathcal{D}_x).
\end{align*}
If this last event occurs, then there exists at least some ball $(z,r)\in \widetilde{\eta}\cap( S_x\times\mathbb{R}_+)$ such that $\eta\not\in A$, but $\eta\cup\{(z,r)\}\in A$. This means that
\[
1_{\eta\not\in A\text{ and } \eta\cup\{(\zeta,R)\}\in A, \widetilde{\eta}\in\mathcal{D}_x} \leq 1_{\eta\not\in A}\cdot1_{\exists (z,r)\in \widetilde{\eta}\cap (S_x\times\mathbb{R}_+),\, \eta\cup\{(z,r)\}\in A} \leq 1_{\eta\not\in A}\sum_{(z,R)\in \widetilde{\eta}} 1_{\eta\cup\{(z,r)\}\in A}.
\]
The right hand side is exactly the integral with respect to $\widetilde{\eta}$. Hence, Mecke's formula gives
\begin{align*}
c_{IT}\mathbb{P}_\lambda[\mathcal{P}_x(r_*)]&\leq \mathbb{E}\left(\lambda\cdot 1_{\eta\not\in A}\int_{S_x}\int_{\mathbb{R}_+}  1_{\eta\cup\{z,r\}\in A}\d{\nu(r)}\d{z}\right)\\
&= \lambda\mathbb{E}_\lambda[\mathrm{Piv}_{x,A}].
\end{align*}
The calculation is possible for all values of $r_*$, but the quantity $c_{IT}$ is not constant. If $r_*$ becomes big, $c_{IT}$ tends to $0$ and the bound becomes useless. The dependence in $\lambda$ is much less worrying, since
\[
c_{IT}(\lambda_c) \leq c_{IT}(\lambda) \leq c_{IT}(\lambda_0)
\]
for some $\lambda_0 \geq \lambda_c$ and all $\lambda\in [\lambda_c,\lambda_0]$. Since our result is only local, we can choose some fixed $\lambda_0$ large enough and consider $c_{IT}$ to be bounded away from $0$ and infinity. 

For simplicity, we fixed the boxes $S_x$ to have size 1. By a scaling argument, we can ask for
\begin{equation}\label{eq:r**}
1 + 2\sqrt{d} \leq r_*  \leq r^*\leq 2r_* - 2\sqrt{d}.
\end{equation}

The main step is the most difficult one. To get rid of the implicit dependency in $n$ and to obtain the expression of the derivative of $\mathbb{P}_\lambda[A]$, we need to relate $\mathcal{P}_x(n)$ to $\mathcal{P}_x(r_*)$. First, define $\mathbf{C} := \mathbf{C}(\eta) := \{z\in\mathbb{R}^d\;\vert\; 0\leftrightarrow z\}$ and $\mathbf{C}' := \mathbf{C}'(\eta) := \{z\in\mathbb{R}^d\;\vert\; z\leftrightarrow \partial B_r\}$ the connected components of $0$ and $\partial B_r$ respectively. We claim that
\begin{equation}\label{eq:claim_bool}
\mathbb{P}_\lambda[\mathcal{P}_x(n)\text{ and }d(\mathbf{C}\cap B_{3n}^x, \mathbf{C}') < r^*] \geq \dfrac{c_1}{n^{3d-2}}\mathbb{P}_\lambda[\mathcal{P}_x(n)]
\end{equation}
for some constant $c_1 > 0$. (In the following, all $c_i$ will be positive constants.) This would provide the conclusion of the proof: Choose some $z\in\mathbb{R}^d$ at half distance between $\mathbf{C}\cap B_{3n}^x$ and $\mathbf{C}'$. Take some $y\in\mathbb{Z}^d$ at most $\sqrt{d}$ from $z$. From the last inequality in \eqref{eq:r**}, it follows that $y$ is at most $r_*$ from $\mathbf{C}\cap B_{3n}^x$ and $\mathbf{C}'$. Apparently, $\mathcal{P}_y(r_*)$ must occur. This means that
\[
\mathcal{P}_x(n) \leq c_2n^{3d-2} \sum_{y\in\mathbb{Z}^d\cap B_x^{3n+r_*}} \mathbb{P}_\lambda[\mathcal{P}_y(r_*)]
\]
and thus
\begin{align*}
\mathrm{Inf}_{(x,n)}(f) &\leq 2\lambda c_3\nu[n,n+1)\cdot (n+1)^{3d-2}\cdot (n+1)^d\cdot \dfrac{\lambda}{c_{IT}} \mathbb{E}_\lambda[\mathrm{Piv}_{x,A}]\\
&\leq c_4(n+1)^{4d - 2}\nu[n,n+1)\cdot\mathbb{E}_\lambda[\mathrm{Piv}_{x,A}],
\end{align*}
where we used again, that $\lambda \leq \lambda_0$ is bounded by a fixed value. All together, this gives
\begin{align*}
\theta_r(1-\theta_r) \leq c_4\sum_{n\in\mathbb{N}} \delta_{(x,n)}(T)(n+1)^{4d-2}\nu[n,n+1)\sum_{x\in\mathbb{Z}^d}\mathbb{E}_\lambda[\mathrm{Piv}_{x,A}]
\end{align*}
and all we have to do is to bound the revealment of our algorithms. This very last step is very similar to the discrete case so that we will look at it only at the end of the proof.

Let us concentrate on the claim \eqref{eq:claim_bool} first. To prove it, we need to work a bit.
\begin{lem}
There exists some constant $c_5 > 0$ such that for all $\lambda > \widehat{\lambda}_c$ and all $r \geq r_*$,
\[
\forall x\in\partial B_r,\qquad \mathbb{P}_\lambda[0 \overset{B_r}{\leftrightarrow} B_{r^*}^x] \geq \dfrac{c_5}{r^{2d-2}}.
\]
\end{lem}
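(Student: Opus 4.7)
The plan is to connect $0$ to a small ball near $x$ through a ``hub'' at the midpoint and then apply the FKG inequality to glue the pieces together. Let $y := x/2$, so $|y| = r/2$. Introduce the three increasing events
\[
F_1 = \{0 \overset{B_r}{\leftrightarrow} B_{r_*}^y\},\qquad F_2 = \{B_{r_*}^y \overset{B_r}{\leftrightarrow} B_{r^*}^x\},\qquad F_3 = \mathcal{D}_y,
\]
where $\mathcal{D}_y$ is the insertion-tolerance event from the main text (a PPP ball covering $B_{r_*}^y$ and contained in $B_{r^*}^y$). On $F_1 \cap F_2 \cap F_3$ the bridge ball supplied by $\mathcal{D}_y$ joins the two legs, yielding $0 \overset{B_r}{\leftrightarrow} B_{r^*}^x$. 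Since all three events are increasing, FKG gives
\[
\mathbb{P}_\lambda[F_1 \cap F_2 \cap F_3] \geq \mathbb{P}_\lambda[F_1] \cdot \mathbb{P}_\lambda[F_2] \cdot \mathbb{P}_\lambda[F_3],
\]
and the last factor equals $c_{IT}(\lambda) > 0$, which is bounded away from zero uniformly on a small neighbourhood of $\lambda$.

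It then suffices to prove $\mathbb{P}_\lambda[F_i] \geq c/r^{d-1}$ for $i = 1,2$. First, for $\lambda > \widehat{\lambda}_c$, extract from the definition a uniform bound $\mathbb{P}_\lambda[S_R \leftrightarrow S_{2R}] \geq \epsilon$ for all $R$ large enough, and promote it (using an insertion-tolerance event at the origin and an FKG gluing along the dyadic scales $r_*, 2r_*, \dots, r/2$) to a uniform positive lower bound $\mathbb{P}_\lambda[0 \overset{B_r}{\leftrightarrow} S_{r/2}] \geq \epsilon'$. Next, cover $S_{r/2}$ by $\Theta(r^{d-1})$ spherical caps of diameter $r_*$. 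By rotation invariance of the law, every cap is reached with the same probability, so a union bound combined with the lower bound on $\mathbb{P}_\lambda[0 \overset{B_r}{\leftrightarrow} S_{r/2}]$ implies that any fixed cap — in particular one inside $B_{r_*}^y$ — is reached with probability at least $\epsilon'/\Theta(r^{d-1})$, which is the desired $c/r^{d-1}$ bound for $F_1$. The same pigeonhole argument, carried out after translating the picture so that $x$ plays the role of the origin and $y$ the role of the target sphere centre, handles $F_2$. Multiplying the two bounds together with the $c_{IT}$ factor from FKG gives the claimed $c_5/r^{2d-2}$.

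The main obstacles I anticipate are twofold. First, every ingredient must be localized inside $B_r$: the annulus-crossing bound supplied by $\widehat{\lambda}_c$ is a statement about the unrestricted model, and one must show that restricting to the balls entirely contained in $B_r$ costs only a constant factor (either by shrinking the scales slightly or by a conditioning argument on distant balls, cf.\ the openness statement in Theorem~\ref{theo:open_set}). Second, the promotion of the annulus-crossing hypothesis to a lower bound on $\{0 \overset{B_r}{\leftrightarrow} S_{r/2}\}$ is where the bulk of the technical work sits: one must attach the origin to a crossing path via insertion tolerance and iterate the construction across dyadic scales while controlling the overall FKG bookkeeping. These steps are standard in continuum percolation but need to be executed carefully to preserve the polynomial exponent $2d-2$.
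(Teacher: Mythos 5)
There is a genuine gap, and it sits at the very first step of your quantitative argument. You claim that the hypothesis $\lambda > \widehat{\lambda}_c$ can be ``promoted'' to a uniform lower bound $\mathbb{P}_\lambda[0 \overset{B_r}{\leftrightarrow} S_{r/2}] \geq \epsilon' > 0$ independent of $r$. This cannot be true in the generality needed: such a bound would give $\theta(\lambda) \geq \epsilon' > 0$ and hence $\lambda \geq \lambda_c$, which is precisely the conclusion of the theorem this lemma is serving — a priori only $\widehat{\lambda}_c \leq \lambda_c$ is known, and for $\lambda \in (\widehat{\lambda}_c, \lambda_c)$ (if that interval is nonempty) one has $\theta_{r/2}(\lambda) \to 0$. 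Your proposed dyadic gluing of annulus crossings does not rescue this: it concatenates $\Theta(\log r)$ crossings, each costing a constant factor through FKG and insertion tolerance, and so yields at best $r^{-C}$ with an uncontrolled exponent $C$, destroying the target exponent $2d-2$. The correct consequence of $\lambda > \widehat{\lambda}_c$ (and this is what the paper extracts) is only the \emph{polynomial} bound $\mathbb{P}_\lambda[S_0 \leftrightarrow \partial B_r] \geq c_7/r^{d-1}$, obtained by a union bound over the $\Theta(r^{d-1})$ unit boxes meeting $S_r$ in the crossing event $\{S_r \leftrightarrow S_{2r}\}$. Once this is corrected, your midpoint decomposition gives the wrong exponent: each of your two legs $F_1$, $F_2$ would then cost $r^{-(2d-2)}$ rather than $r^{-(d-1)}$ (a factor $r^{-(d-1)}$ for the one-arm event plus a factor $r^{-(d-1)}$ for the pigeonhole over caps), so the product is $r^{-(4d-4)}$, too weak. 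The paper instead spends its two factors of $r^{-(d-1)}$ on a single leg: one from the annulus-crossing union bound, one from pigeonholing over the $\Theta(r^{d-1})$ boundary balls $B_{r^{**}}^z$, $z \in Z \subseteq \partial B_r$, using rotation invariance of $U(r) := \mathbb{P}_\lambda[S_0 \overset{B_r}{\leftrightarrow} B_{r^{**}}^x]$.

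The second gap is the localization inside $B_r$, which you flag but dismiss as ``only a constant factor.'' It is not routine: a path realizing $\{S_0 \leftrightarrow \partial B_r\}$ may depend essentially on a large ball not contained in $B_r$, and showing that this contingency is a negligible error term relative to $r^{-(2d-2)}$ is the bulk of the paper's proof. It requires the moment assumption \eqref{eq:moment_assumption} to bound $\mathbb{P}_\lambda[\mathcal{A}_y] \lesssim \overline{r}^{-(4d-3)}\,\mathbb{P}_\lambda[S_0 \overset{B_r}{\leftrightarrow} B_{r_*}^y]$, an FKG comparison to express that last factor as $U(r)/(c_{IT}\,U(\overline{r}))$, and then an induction on $r$ through the resulting recursive inequality, with $r^{**}$ chosen large to make the error term absorbable. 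Neither ``shrinking the scales slightly'' nor a soft conditioning on distant balls substitutes for this induction. In short: the skeleton (annulus crossing, union bound, pigeonhole by symmetry, FKG gluing with $c_{IT}$) is the right toolbox, but the two steps you defer are exactly where the content of the lemma lies, and the first of them as stated is circular.
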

\begin{proof}
First, recall from Theorem \ref{theo:open_set} that the set $\{\lambda > 0\;\vert\; \lim_{r\to+\infty}\mathbb{P}_\lambda[S_r\leftrightarrow S_{2r}] = 0\}$ is open. That means that there exists some $c_6 > 0$ such that for all $\lambda > \widehat{\lambda}_c$ and all $r \geq r_*$,
\[
\mathbb{P}_\lambda[S_r\leftrightarrow S_{2r}] \geq c_6.
\]
By union bound, this means that
\[
c_6 \leq \sum_{x \in X} \mathbb{P}_\lambda[S_x\leftrightarrow \partial B_{2r}] \leq \vert X\vert \cdot \mathbb{P}_\lambda[S_0 \leftrightarrow \partial B_r],
\]
where $X$ is the set of $x\in\mathbb{Z}^d$ such that $S_x\cap \partial B_r \neq\emptyset$. Since $\vert X\vert$ is proportional to $r^{d-1}$, we get the bound
\begin{equation}\label{eq:first_bound_S_0_B_r}
\mathbb{P}_\lambda[S_0\leftrightarrow \partial B_r] \geq \dfrac{c_7}{r^{d-1}}
\end{equation}
for some $c_7 > 0$ and all $\lambda > \widehat{\lambda}_c$ and all $r \geq r_*$. This is the only part in the entire proof that we require $\lambda > \widehat{\lambda}_c$.

The rest of the proof deals with the problem that a path could use balls from outside $B_r$. This should not be very probable, because of the moment assumption \eqref{eq:moment_assumption}. To simplify the problem, we will prove that there exist some different $r^{**} > 0$ we may choose as we wish and some $c_8 > 0$ such that 
\[
\mathbb{P}_\lambda[S_0 \overset{B_r}{\leftrightarrow} B_{r^{**}}^x] \geq \dfrac{c_8}{r^{2d-2}}.
\]
Indeed, using 
\[
\{0\overset{B_r}{\leftrightarrow} B_{r^*}^x\} \supseteq \mathcal{D}_0 \cap \{S_0\overset{B_r}{\leftrightarrow} B_{r^{**}}^x\} \cap \bigcap_{t\in \mathcal{T}} \mathcal{D}_t,
\]
where $\mathcal{T}$ is the set of all $t\in\mathbb{Z}^d$ such that $S_t\cap B_{2r^{**}}^x\neq \emptyset$, the FKG inequality yields
\[
\mathbb{P}_\lambda[0\overset{B_r}{\leftrightarrow} B_{r^*}^x] \geq {c_{IT}}^{\vert \mathcal{T}\vert + 1}\cdot \mathbb{P}_\lambda[S_0 \overset{B_r}{\leftrightarrow} B_{r^{**}}^x].
\]

Now, we will attack the central problem and how to separate the external noise of balls not entirely contained in $B_r$. First, fix $r^{**} \geq 2r^*$ large enough (see the end of the proof). Then, divide the $B_r$ into the inner balls $B_{r_*}^y$ with $y\in Y := (\mathbb{Z}^d\cap B_{r-r^{**}})\setminus\{0\}$ and the boundary balls $B_{r^{**}}^z$ where $z\in Z$ and $Z\subseteq\partial B_r$ is finite but sufficiently large such that 
\[
B_r \subseteq \bigcup_{y\in Y} B_{r_*}^y \cup\bigcup_{z\in Z} B_{r^{**}}^z.
\]

\begin{figure}[!htbp]
\centering
	\begin{subfigure}[b]{0.72\linewidth}
		\centering
		\includegraphics[width=0.9\linewidth]{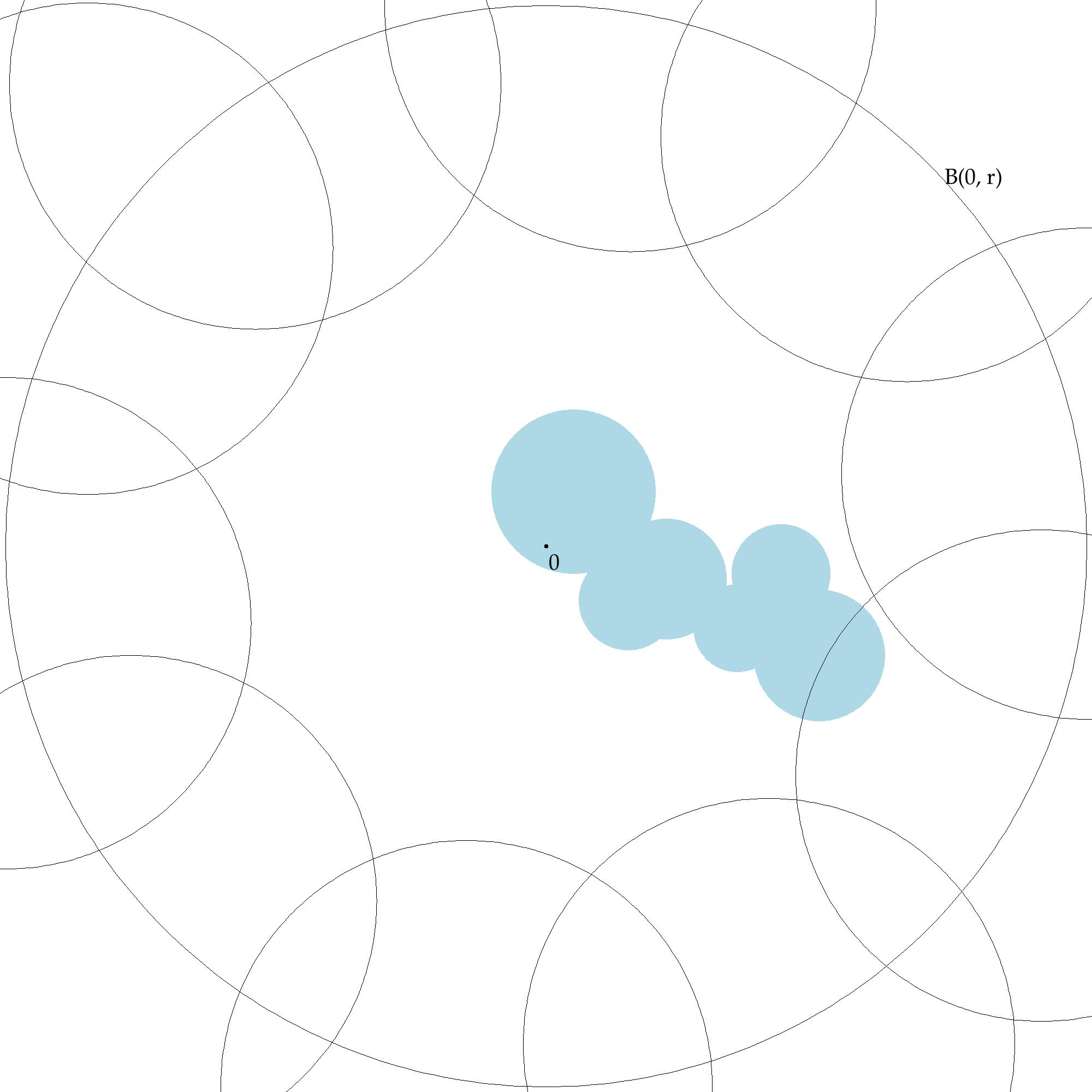}
		\caption{Favorable case}
	\end{subfigure}
	\begin{subfigure}[b]{0.72\linewidth}
		\centering
		\includegraphics[width=0.9\linewidth]{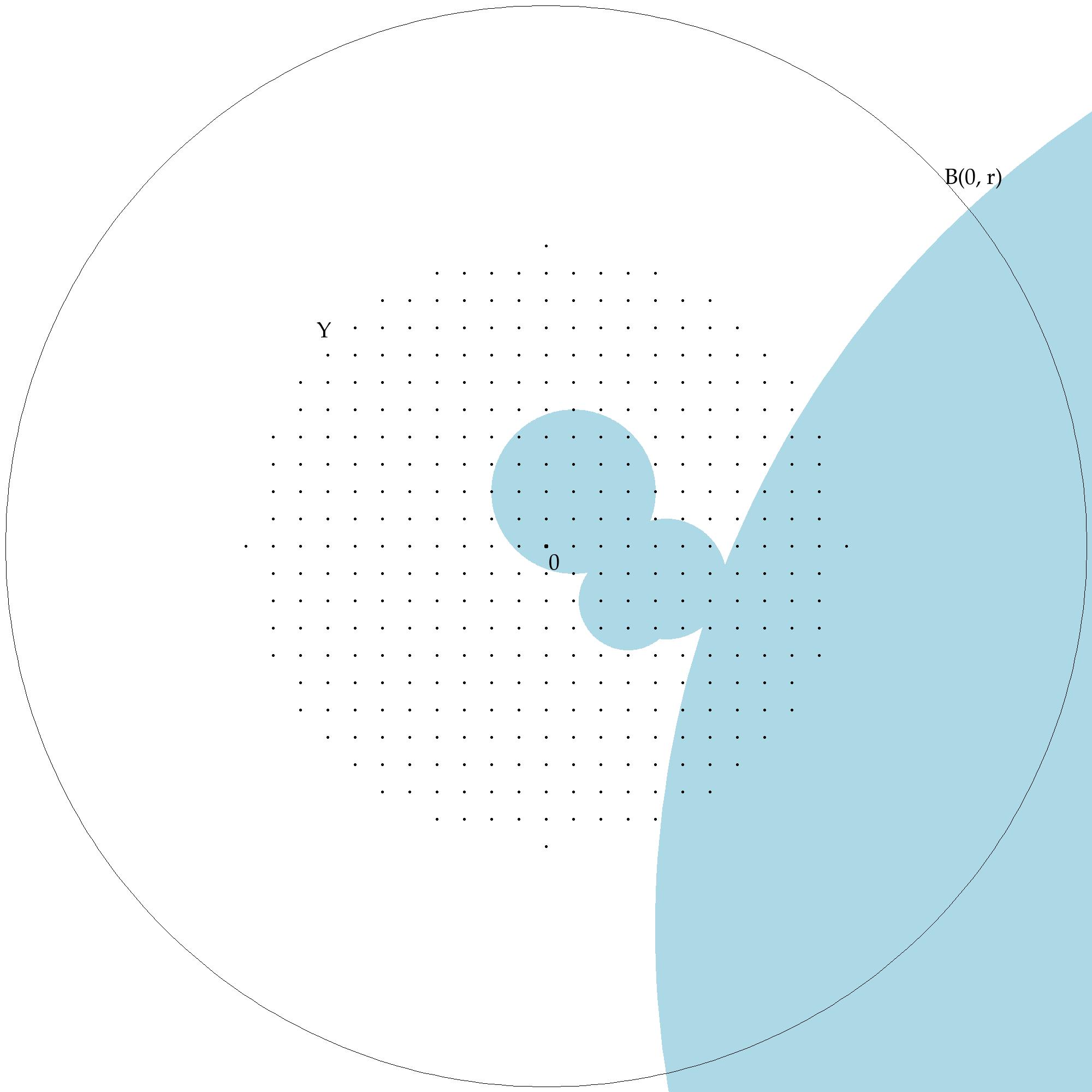}
		\caption{Unfavorable case}
	\end{subfigure}
	\caption{On the event $\{S_0 \leftrightarrow \partial B_r\}$ two things may happen}
	\label{fig:event_S_0_B_r}
\end{figure}
If $S_0$ is connected to $\partial B_r$, then there exists some $z\in Z$ such that $S_0$ is connected to $B_{r^{**}}^z$ in $B_r$ or there exists some $y\in Y$ such that the event
\[
\mathcal{A}_y := \{S_0\overset{B_r}{\leftrightarrow} B_{r_*}^y\}\cap \{\exists (u,R)\in\eta\text{ such that $B_R^u$ intersects both $B_{r_*}^y$ and $\partial B_r$}\}
\]
occurs (see Figure \ref{fig:event_S_0_B_r}), i.e.
\[
\sum_{z\in Z}\mathbb{P}_\lambda[S_0\overset{B_r}{\leftrightarrow} B_{r^{**}}^z] + \sum_{y\in Y} \mathbb{P}_\lambda[\mathcal{A}_y] \geq \mathbb{P}_\lambda[S_0\leftrightarrow \partial B_r] \geq \dfrac{c_7}{r^{d-1}}.
\]
From the moment assumption \eqref{eq:moment_assumption} on $\nu$, it is clear that the second term on the left hand side should be treated as an error term. Note that if it was not there, the result would follow from the fact that $\vert Z\vert \leq cr^{d-1}$ for some $c > 0$. For a first bound on $\mathbb{P}_\lambda[\mathcal{A}_y]$, we will only use the independence between the defining events and the fact that for $y\in Y$, every point in $B_{r_*}^y$ is at least at distance $r - \Vert y\Vert - r_*$ from $\partial B_r$. In the following, we fix $y\in Y$ and write $\overline{r} := r - \Vert y \Vert$. Then
\[
\mathbb{P}_\lambda[\mathcal{A}_y] \leq \mathbb{P}_\lambda[\vert\eta\cap V\vert \geq 1] \cdot \mathbb{P}_\lambda[S_0\overset{B_r}{\leftrightarrow} B_{r_*}^y],
\]
where $V := \{(u,R)\in \mathbb{R}^d\times\mathbb{R}_+\;\vert\; R \geq \frac{\overline{r} - r_*}{2}\text{ and } u\in B_{r_* + R}^y\}$. It is
\begin{align*}
\mathbb{P}_\lambda[\vert\eta\cap V\vert \geq 1] &\leq c_9\int_{\frac{\overline{r} - r_*}{2}}^{+\infty} (r_* + \rho)^d \d{\nu(\rho)}.
\end{align*}
Using the fact that $\Vert y\Vert \leq r - r^{**}$ and $r^{**} \geq 2r^*$, we get
\[
\overline{r} - r_* \geq \dfrac{\overline{r}}{2} + \dfrac{\overline{r}}{2} - r^* \geq \dfrac{\overline{r}}{2} + r^* - r^* \geq \dfrac{\overline{r}}{2} \geq r^* \geq r_*.
\]
Then, the moment assumption on $\nu$ yields
\[
\mathbb{P}_\lambda[\vert\eta\cap V\vert\geq 1]\leq c_{10} \int_{\overline{r}/2}^{+\infty} \rho^d\d{\nu(\rho)} \leq \dfrac{c_{11}}{{\overline{r}}^{4d-3}}.
\]
(Note that we do not formally need such a strong moment assumption for this lemma.) All together, 
\[
\mathbb{P}_\lambda[\mathcal{A}_y] \leq \dfrac{c_{11}}{\overline{r}^{4d-3}}\cdot\mathbb{P}_\lambda[S_0\overset{B_r}{\leftrightarrow} B_{r_*}^y].
\]
We still have to control the second term. For this, we will relate it to the quantity of interest $\mathbb{P}_\lambda[S_0\overset{B_r}{\leftrightarrow} B_{r^{**}}^x]$. Note that for $w := ry/\Vert y\Vert$, 
\[
\{S_0\overset{B_r}{\leftrightarrow} B_{r^{**}}^w\} \supseteq \{S_0\overset{B_r}{\leftrightarrow} B_{r_*}^y\}\cap D_y\cap \{S_y\overset{B_{\overline{r}}^y}{\leftrightarrow} B_{r^{**}}^w]
\]
and the FKG inequality yields
\[
\mathbb{P}_\lambda[S_0\overset{B_r}{\leftrightarrow} B_{r^{**}}^w] \geq \mathbb{P}_\lambda[S_0\overset{B_r}{\leftrightarrow} B_{r_*}^y]\cdot c_{IT}\cdot\mathbb{P}_\lambda[S_y\overset{B_{\overline{r}}^y}{\leftrightarrow} B_{r^{**}}^w].
\]
Note that the quantity $U(r) := \mathbb{P}_\lambda[S_0 \overset{B_r}{\leftrightarrow} B_{r^{**}}^x]$ is independent of the choice of $x\in \partial B_r$. Hence, the inequality above gives
\[
\mathbb{P}_\lambda[\mathcal{A}_y] \leq c_{12}\dfrac{U(r)}{U(\overline{r})\cdot\overline{r}^{4d-3}}.
\]
Together with our first union bound, we obtain the recursive formula
\[
c_{13} r^{d-1} U(r) + c_{12}\sum_{y\in Y} \dfrac{U(r)}{U(r - \Vert y\Vert)\cdot (r - \Vert y \Vert)^{4d-3}} \geq  \dfrac{c_7}{r^{d-1}}.
\]
It suffices now to prove the property by induction. For this, it will become necessary to choose $r^{**}$ large enough. First, note that $U(r) \equiv 1$ on $[0, r^{**}]$. Since $\Vert y\Vert \geq 1$ for all $y\in Y$, we will consider an interval of the form $[k, k+1]$ with $k \geq r^{**}$ and suppose that the result is proven for all $r\leq k$. Now, take $r\in [k, k+1]$. Then
\[
\dfrac{1}{U(r - \Vert y\Vert)\cdot (r - \Vert y\Vert)^{4d - 3}} \leq \dfrac{(r - \Vert y\Vert)^{2d-2}}{c_8(r - \Vert y\Vert)^{4d-3}} = \dfrac{1}{c_8(r - \Vert y\Vert)^{2d-1}}.
\]
Note that
\[
\sum_{y\in Y} \dfrac{1}{(r - \Vert y\Vert)^{2d-1}} \leq \int_0^{r-r^{**}} \dfrac{\rho^{d-1}}{(r - \rho)^{2d-1}}\d{\rho} \leq \dfrac{r^{d-1}}{{r^{**}}^{2d-2}}.
\]
Choosing $r^{**}$ large enough so that $c_{14} := \frac{1}{{r^{**}}^{2d-2}} < \frac{c_7}{2c_{12}}$ finally yields
\[
\dfrac{c_7}{r^{d-1}} \leq U(r) r^{d-1}\cdot \left(c_{13} + \frac{c_7}{2c_8}\right)
\]
and hence
\[
U(r) \geq \dfrac{1}{c_{13}c_8/c_7 + 1/2} \cdot\dfrac{c_8}{r^{2d-2}}.
\]
Using the fact that we can choose $c_8$ arbitrarily small, we may assume the first fraction to be greater than 1, giving
\[
\mathbb{P}_\lambda[S_0 \overset{B_r}{\leftrightarrow} B_{r_*}^x] = U(r) \geq \dfrac{c_8}{r^{2d-2}}.
\]
Together with the arguments from the beginning, we get the result
\[
\mathbb{P}_\lambda[0 \overset{B_r}{\leftrightarrow} B_{r^*}^x] \geq \dfrac{c_5}{r^{2d-2}}
\]
for some constant $c_5 > 0$.
\end{proof}

We are now able to attack the proof of our claim \eqref{eq:claim_bool}. First, we will see the intuitive argument which will be supported by an approximation argument afterwards.  First, fix $y\in \mathbb{Z}^d$ such that $S_y\cap B_n^x\neq\emptyset$. For some subset $C\subseteq\mathbb{R}^d$, we define $u_C := u_C(y)$ as the point of $S_y$ farthest away from $C$ and $v_C :=v_C(y)$ the point of $C$ closest to $u_C$. If more than one point satisfies the conditions, choose one with respect to some fixed ordering.

Consider the event
\[
\mathcal{E}_y := \{ \mathbf{C}\cap B_n^x\neq\emptyset\text{ and }d(u_\mathbf{C},\mathbf{C}) \geq r^*\}
\]
and note that it is $\mathbf{C}$-measurable.  Then, define the events
\[
\mathcal{F}_y := \mathcal{D}_{u_\mathbf{C}},\quad \mathcal{G}_y := \{u_\mathbf{C} \overset{\mathbb{R}^d\setminus\mathbf{C}}{\leftrightarrow} B_{r^*}^{v_\mathbf{C}}\},\quad \mathcal{H}_y := \{ S_y\cap B_n^x\overset{\mathbb{R}^d\setminus\mathbf{C}}{\leftrightarrow} \partial B_r\}.
\]

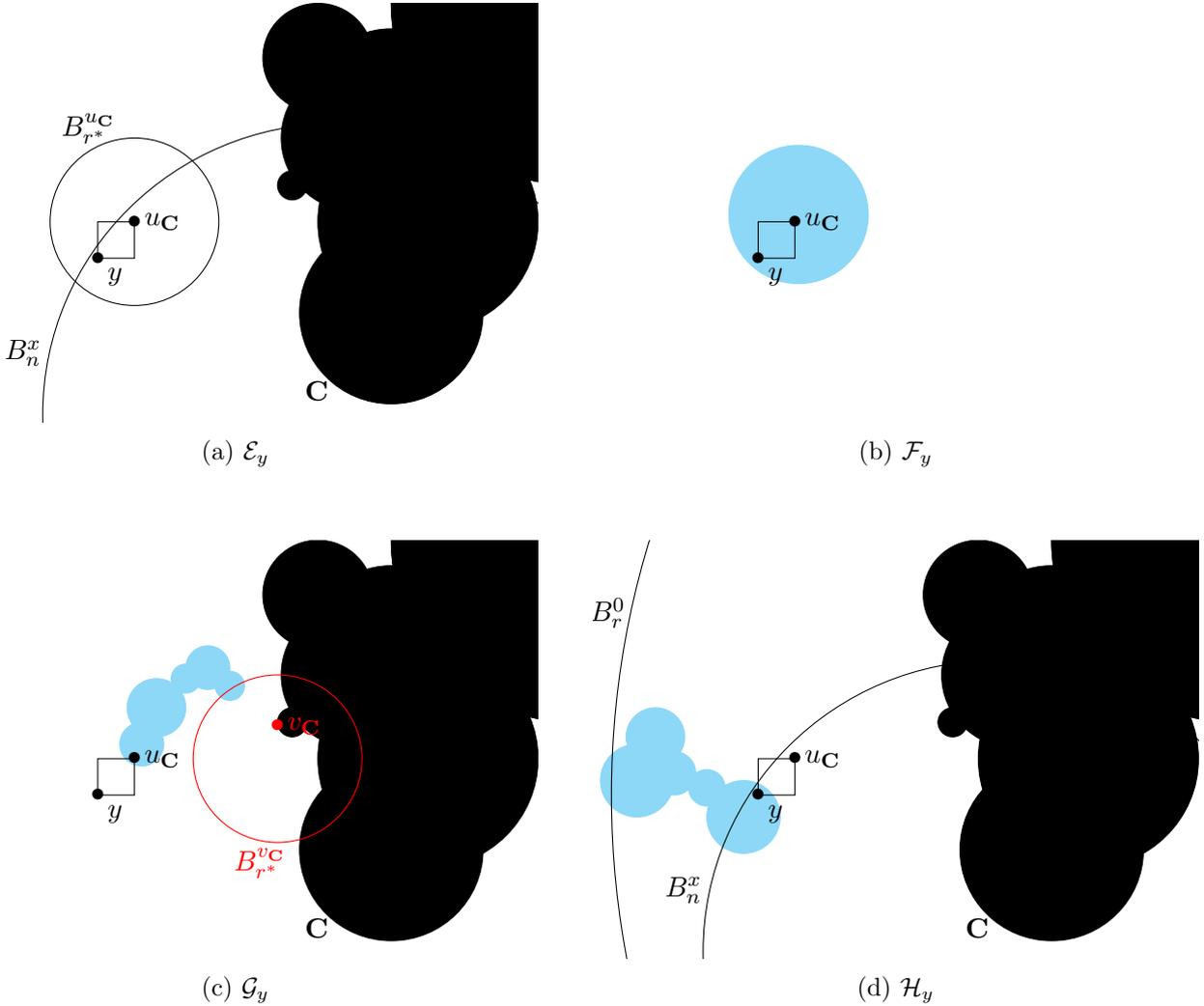
\begin{figure}[H]
	\centering
	\hspace*{-0.9in}
	\begin{subfigure}[b]{0.6\linewidth}
		\centering
		\begin{tikzpicture}[scale=0.5]

	\clip (-4.5, -4.5) rectangle (12, 7);
	
	\draw (0,0) node{$\bullet$};
	\draw (0,0) node[below right]{$y$};
	\draw (0,0) -- (1,0) -- (1,1) -- (0,1) -- (0,0);
	\draw (1,1) node{$\bullet$};
	\draw (1,1) node[right]{$u_{\mathbf{C}}$};
	
	\draw (1,1) circle (2.3);
	\draw (95: 2.9) node[above]{$B_{r^*}^{u_{\mathbf{C}}}$};
	
	\draw (6.5,-4.3) circle (8);
	\draw (-116:2.9) node[left]{$B_n^x$};
	
	\draw[fill=black] (13, 7) circle (5);
	\draw[fill=black] (9, 1) circle (3);
	\draw[fill=black] (7, 3.3) circle (2);
	\draw[fill=black] (13, 7) circle (5);
	\draw[fill=black] (8, -1.5) circle (2.5);
	\draw[fill=black] (8, 4) circle (2.3);
	\draw[fill=black] (6, 5.5) circle (1.5);
	\draw[fill=black] (5.3, 2) circle (0.4);
	
	\draw (-31.5:7) node{$\mathbf{C}$};

\end{tikzpicture}
		\caption{$\mathcal{E}_y$}
	\end{subfigure}%
	\begin{subfigure}[b]{0.6\linewidth}
		\centering
		\begin{tikzpicture}[scale=0.5]

	\clip (-4.5, -4.5) rectangle (12, 7);
	
	\draw[color=cyan!40, fill=cyan!40] (1.1, 1.2) circle (1.9);

	\draw (0,0) node{$\bullet$};
	\draw (0,0) node[below right]{$y$};
	\draw (0,0) -- (1,0) -- (1,1) -- (0,1) -- (0,0);
	\draw (1,1) node{$\bullet$};
	\draw (1,1) node[right]{$u_{\mathbf{C}}$};

\end{tikzpicture}
		\caption{$\mathcal{F}_y$}
	\end{subfigure}\hfill\\\hfill\\\hfill\\
	\hspace*{-0.9in}
	\begin{subfigure}[b]{0.6\linewidth}
		\centering
		\begin{tikzpicture}[scale=0.5]

	\clip (-4.5, -4.5) rectangle (12, 7);

	\draw[color=cyan!40, fill=cyan!40] (1.2, 1.4) circle (0.6);
	\draw[color=cyan!40, fill=cyan!40] (1.6, 2.4) circle (0.8);	
	\draw[color=cyan!40, fill=cyan!40] (2.4, 3.2) circle (0.4);
	\draw[color=cyan!40, fill=cyan!40] (3, 3.5) circle (0.6);
	\draw[color=cyan!40, fill=cyan!40] (3.6, 3) circle (0.4);

	\draw (0,0) node{$\bullet$};
	\draw (0,0) node[below right]{$y$};
	\draw (0,0) -- (1,0) -- (1,1) -- (0,1) -- (0,0);
	\draw (1,1) node{$\bullet$};
	\draw (1,1) node[right]{$u_{\mathbf{C}}$};

	\draw[fill=black] (13, 7) circle (5);
	\draw[fill=black] (9, 1) circle (3);
	\draw[fill=black] (7, 3.3) circle (2);
	\draw[fill=black] (13, 7) circle (5);
	\draw[fill=black] (8, -1.5) circle (2.5);
	\draw[fill=black] (8, 4) circle (2.3);
	\draw[fill=black] (6, 5.5) circle (1.5);
	\draw[fill=black] (5.3, 2) circle (0.4);
	
	\draw (-31.5:7) node{$\mathbf{C}$};
	
	\draw[color=red] (4.9, 1.9) node{$\bullet$};
	\draw[color=red] (4.9, 1.9) node[right]{$v_{\mathbf{C}}$};
	\draw[color=red] (4.9, 1.0) circle (2.3);
	\draw[color=red] (-23 : 4.8) node{$B_{r^*}^{v_\mathbf{C}}$};

\end{tikzpicture}
		\caption{$\mathcal{G}_y$}
	\end{subfigure}%
	\begin{subfigure}[b]{0.6\linewidth}
		\centering
		\begin{tikzpicture}[scale=0.5]

	\clip (-4.5, -4.5) rectangle (12, 7);
	
	\draw[color=cyan!40, fill=cyan!40] (-0.4, -0.6) circle (1);
	\draw[color=cyan!40, fill=cyan!40] (-1.4, 0.2) circle (0.5);
	\draw[color=cyan!40, fill=cyan!40] (-2.8, 1.6) circle (0.8);
	\draw[color=cyan!40, fill=cyan!40] (-2.3, 0.6) circle (0.6);
	\draw[color=cyan!40, fill=cyan!40] (-3.3, 0.4) circle (1);

	\draw (20,0) circle (24);
	\draw (-4.1, 5) node{$B_r^0$};
	
	\draw (0,0) node{$\bullet$};
	\draw (0,0) node[below right]{$y$};
	\draw (0,0) -- (1,0) -- (1,1) -- (0,1) -- (0,0);
	\draw (1,1) node{$\bullet$};
	\draw (1,1) node[right]{$u_{\mathbf{C}}$};

	\draw (6.5,-4.3) circle (8);
	\draw (-116:2.9) node[left]{$B_n^x$};
	
	\draw[fill=black] (13, 7) circle (5);
	\draw[fill=black] (9, 1) circle (3);
	\draw[fill=black] (7, 3.3) circle (2);
	\draw[fill=black] (13, 7) circle (5);
	\draw[fill=black] (8, -1.5) circle (2.5);
	\draw[fill=black] (8, 4) circle (2.3);
	\draw[fill=black] (6, 5.5) circle (1.5);
	\draw[fill=black] (5.3, 2) circle (0.4);
	
	\draw (-31.5:7) node{$\mathbf{C}$};

\end{tikzpicture}
		\caption{$\mathcal{H}_y$}
	\end{subfigure}
	\caption{Visualisation of the four events}
\end{figure}

Conditioned on $\mathbf{C}$, the process $\eta(\mathbb{R}^d\setminus \mathbf{C})$ can be seen as some independent realization of the PPP on $\mathbb{R}^d\setminus\mathbf{C}$. Using the FKG inequality and the fact that, on $\mathcal{E}_y$, the distance between $u_\mathbf{C}$ and $v_\mathbf{C}$ is larger than $r^*$ and smaller than $2n$ yields
\begin{align*}
\mathbb{P}_\lambda[\mathcal{F}_y\cap\mathcal{G}_y\cap\mathcal{H}_y\;\vert\; \mathbf{C}] &\geq  \mathbb{P}_\lambda[\mathcal{F}_y\;\vert\;\mathbf{C}]\cdot\mathbb{P}_\lambda[\mathcal{G}_y\;\vert\;\mathbf{C}]\cdot\mathbb{P}_\lambda[\mathcal{H}_y\;\vert\;\mathbf{C}]\\
&\geq c_{IT}\dfrac{c_5}{(2n)^{2d-2}}\mathbb{P}_\lambda[\mathcal{H}_y\;\vert\;\mathbf{C}]
\end{align*}
almost surely on $\mathcal{E}_y$. Note that if $\mathcal{E}_y$ and $\mathcal{H}_y$ occur, then $\mathcal{P}_x(n)$ occurs since $r^* > \sqrt{d}$. If in addition the events $\mathcal{F}_y$ and $\mathcal{G}_y$ occur, then $B_{r_*}^{v_\mathbf{C}} \cap \mathbf{C}'$ is nonempty. Since by construction 
\[
\Vert v_\mathbf{C} - x\Vert\leq \Vert v_\mathbf{C} - u_\mathbf{C}\Vert + \Vert u_\mathbf{C} - x\Vert \leq n + 2n = 3n,
\]
we deduce that $d(\mathbf{C}\cap B_{3n}^x,\mathbf{C}') < r^*$. Hence, the above inequality yields
\[
\mathbb{P}_\lambda[\mathcal{P}_x(n)\text{ and } d(\mathbf{C}\cap B_{3n}^x,\mathbf{C}') < r^*] \geq \mathbb{P}_\lambda[\mathcal{E}_y\cap \mathcal{F}_y\cap \mathcal{G}_y\cap \mathcal{H}_y] \geq \dfrac{c_{15}}{n^{2d-2}} \mathbb{P}_\lambda[\mathcal{E}_y\cap\mathcal{H}_y].
\]
To control the right hand side, observe that if $\{\mathcal{P}_x(n)\text{ and } d(\mathbf{C}\cap B_{3n}^x,\mathbf{C}') \geq r^*\}$ occurs, there exists some $y\in \mathbb{Z}^d$ with $S_y\cap B_n^x\neq \emptyset$ such that $\mathcal{E}_y\cap\mathcal{H}_y$ occurs. Summing over all possibilities leads to the union bound
\[
c_{16}n^d \mathbb{P}_\lambda[\mathcal{P}_x(n)\text{ and } d(\mathbf{C}\cap B_{3n}^x,\mathbf{C}') < r^*] \geq \dfrac{c_{15}}{n^{2d-2}} \mathbb{P}_\lambda[\mathcal{P}_x(n)\text{ and } d(\mathbf{C}\cap B_{3n}^x,\mathbf{C}') \geq r^*].
\]
The claim follows from
\[
\mathbb{P}_\lambda[\mathcal{P}_x(n)\text{ and } d(\mathbf{C}\cap B_{3n}^x,\mathbf{C}') \geq r^*] = \mathbb{P}_\lambda[\mathcal{P}_x(n)] - \mathbb{P}_\lambda[\mathcal{P}_x(n)\text{ and }d(\mathbf{C}\cap B_{3n}^x,\mathbf{C}) < r^*].
\]

The most delicate point is the use of the FKG inequality for the conditional probability. To avoid any imprecisions, I will present an approximation argument which uses the overall structure of the proof. 

Let $\epsilon > 0$. We consider the following approximation. We say for $x\in\epsilon\mathbb{Z}^d$ that $x + [0,\epsilon)^d$ is open if and only if $\mathcal{O}(\eta)\cap (x + [0,\epsilon)^d)\neq \emptyset$. In this model, we define the connected component $\mathbf{C}(\epsilon)$ of $0$. Then, define the events
\[
\mathcal{E}_y^\epsilon := \{ \mathbf{C}(\epsilon)\cap B_n^x \neq \emptyset\text{ and } d(u_{\mathbf{C}}, \mathbf{C}(\epsilon)) \geq r^* + 2\epsilon\}
\]
and
\[
\mathcal{F}_y^\epsilon := \mathcal{F}_y = \mathcal{D}_{u_{\mathbf{C}}},\quad \mathcal{G}_y^\epsilon := \{u_{\mathbf{C}} \overset{\mathbb{R}^d\setminus\mathbf{C}(\epsilon)}{\leftrightarrow} B_{r^*}^{v_{\mathbf{C}}}\},\quad \mathcal{H}_y^\epsilon := \{ S_y\cap B_n^x\overset{\mathbb{R}^d\setminus\mathbf{C}(\epsilon)}{\leftrightarrow} \partial B_r\}.
\]
Observe that the events $\mathcal{G}_y^\epsilon$ and $\mathcal{H}_y^\epsilon$ only use the approximation of $\mathbf{C}(\epsilon)$ and do not use any approximation of $\eta$ outside of $\mathbf{C}(\epsilon)$. Indeed, the connection $\leftrightarrow$ is still to be thought with respect to $\eta$. Since we are now on a countable state space, we can condition on the event $\{\mathbf{C}(\epsilon) = C\}$. Conditioned on this event, the process $\eta(\mathbb{R}^d\setminus C)$ can be seen as an independent Poisson point process on $\mathbb{R}^d\setminus C$. Hence, the standard FKG-inequality applies and we can write
\[
\mathbb{P}_\lambda[\mathcal{F}_y^\epsilon\cap\mathcal{G}_y^\epsilon\cap\mathcal{H}_y^\epsilon\;\vert\; \mathbf{C}(\epsilon)] \geq \mathbb{P}_\lambda[\mathcal{F}_y^\epsilon\;\vert\;\mathbf{C}(\epsilon)]\cdot \mathbb{P}_\lambda[\mathcal{G}_y^\epsilon\;\vert\; \mathbf{C}(\epsilon)]\cdot\mathbb{P}_\lambda[\mathcal{H}_y^\epsilon\;\vert\;\mathbf{C}(\epsilon)]\quad\text{ a.s. on $\mathcal{E}_y^\epsilon$.}
\]
From here, we get as before
\begin{align*}
\mathbb{P}_\lambda[\mathcal{E}_y^\epsilon\cap\mathcal{F}_y^\epsilon\cap \mathcal{G}_y^\epsilon\cap \mathcal{H}_y^\epsilon] &\geq c_{IT}\dfrac{c_5}{(2n)^{2d-2}} \mathbb{E}_\lambda\left[ 1_{\mathcal{E}_y^\epsilon}\cdot  \mathbb{P}_\lambda[\mathcal{H}_y^\epsilon\;\vert\; \mathbf{C}(\epsilon)] \right]\\
&= c_{IT}\dfrac{c_5}{(2n)^{2d-2}} \mathbb{E}_\lambda\left[ 1_{\mathcal{E}_y^\epsilon}\cdot  1_{\mathcal{H}_y^\epsilon} \right]\\
&= c_{IT}\dfrac{c_5}{(2n)^{2d-2}} \mathbb{P}_\lambda[\mathcal{E}_y^\epsilon\cap \mathcal{H}_y^\epsilon].
\end{align*}
Thus, it suffices to show that this approximated equation has the right convergence. To this end, note that no two balls in $\eta$ are tangent (see Lemma \ref{lem:tangent_balls_0}). Hence, if $\epsilon > 0$ small enough, in a compact neighbourhood of $0$, two balls are connected if and only if they are in the approximation. Hence, $\mathbf{C}(\epsilon)$ converges to $\mathbf{C}$ in the Hausdorff sense on every compact neighbourhood of the origin. We obtain the limit inequality
\[
\mathbb{P}_\lambda[\mathcal{E}_y\cap\mathcal{F}_y\cap\mathcal{G}_y\cap\mathcal{H}_y] \geq c_{IT}\dfrac{c_5}{(2n)^{2d-2}} \mathbb{P}_\lambda[\mathcal{E}_y \cap \mathcal{H}_y]
\]
and we can conclude as before.

The last step is very similar to the discrete case. We simply adapt the algorithms to the continuous model. For $0\leq s\leq r$, take $i_0 := other$ and reveal $\eta_{other}$. At each step $t$, suppose that that the indices $\{i_0,\dots,i_{t-1}\} \in I_L\cup \{other\}$ have been revealed. Then,
\begin{itemize}
	\item If there exists $(x,n)\in I_L\setminus\{i_0,\dots,i_{t-1}\}$ such that the Euclidean distance of $S_x$ to the connected component of $\partial B_s$ in $\mathcal{O}(\eta_{i_0}\cup\dots\cup\eta_{i_{t-1}})$ is smaller than $n + 1$, then set $i_{t} := (x,n)$. If more than one exists, choose one with respect to some fixed ordering.
	\item If such index does not exist, stop the algorithm.
\end{itemize}
Denote this algorithm by $T_s$. Then
\[
\delta_{(x,n)}(T_s) \leq \mathbb{P}_\lambda[S_x^n\leftrightarrow\partial B_s],
\]
where
\[
S_x^n := \bigcup \{S_y\;\vert\; \exists z\in S_y,\; \Vert x - z\Vert \leq n + 1\}.
\]
Hence,
\[
\theta_r(1-\theta_r) \leq c_4\theta_r'(\lambda)\sum_{n\in\mathbb{N}} \int_0^r \mathbb{P}_\lambda[S_x^n\leftrightarrow\partial B_s]\d{s}\cdot(n+1)^{4d-2}\nu[n,n+1).
\]
Now, write $Y$ for the subset of $\mathbb{Z}^d$ such that $S_x^n = \bigcup_{y\in Y} S_y$. Then
\[
\mathbb{P}_\lambda[S_x^n\leftrightarrow \partial B_s] \leq \sum_{y\in Y} \mathbb{P}_\lambda[S_y\leftrightarrow \partial B_s] \leq \dfrac{1}{c_{IT}}\mathbb{P}_\lambda[y\leftrightarrow \partial B_s],
\]
where we used the FKG inequality in the second inequality. Integrating the probability on the right hand side from $0$ to $r$ yields
\[
\int_0^r \mathbb{P}_\lambda[y\leftrightarrow\partial B_s]\d{s} \leq \int_0^r \theta_{\vert s - \Vert y \Vert\vert}(\lambda) \d{s} \leq 2\Sigma_r(\lambda).
\]
Using the fact that $\vert Y\vert$ is proportional to $(n+1)^d$, we can write
\begin{align*}
r\theta_r(1-\theta_r) &\leq 2c_4\Sigma_r\theta_r'\sum_{n\in\mathbb{N}} (n+1)^{5d - 2}\nu[n,n+1) = c\Sigma_r\theta_r'
\end{align*}
by the moment assumption. We conclude by applying Lemma \ref{lem:diff_ineq_cont}.
\end{proof}


\part*{Appendix}

\begin{appendices}

\section{The Poisson Point Process in the Poisson-Boolean model}

The following theorem formalizes our heuristic approach to the problem.

\begin{theo}\label{aprop:approximation}
Consider an independent family $(B_{x}^\epsilon; 0 < \epsilon < \epsilon_0, x\in\epsilon\mathbb{Z}^d)$ such that $B_{x}^\epsilon$ has a Bernoulli law with parameter $\lambda\epsilon^d$. Then 
\[
\lim_{\epsilon\downarrow 0} \sum_{x\in\epsilon\mathbb{Z}^d} \delta_x1_{B_{x}^\epsilon = 1} = \eta\quad\text{ in law},
\]
where $\eta$ is a PPP on $\mathbb{R}^d$ with intensity $\lambda\cdot\mathrm{Leb}$.
\end{theo}
\begin{proof}
Denote the left hand sum by $\eta_\epsilon$ and consider some bounded Borel set $A\subseteq\mathbb{R}^d$ and $k\in\mathbb{N}$. Denote by $\mathcal{F}_\epsilon$ the $\sigma$-algebra generated by the family $(B_x^\epsilon)_{x\in\epsilon\mathbb{Z}^d}$. It is 
\[
\{\eta_\epsilon(A) = k\} = \bigcup_{I\in \binom{A_\epsilon}{k}} \left(\bigcap_{x\in I} \{B_x^\epsilon = 1\}\cap \bigcap_{x\in A_\epsilon\setminus I} \{B_x^\epsilon = 0\}\right) \in \mathcal{F}_\epsilon,
\]
where $A_\epsilon := A\cap\epsilon\mathbb{Z}^d$. Hence, $\eta_\epsilon$ is a point process. Furthermore, being a sum of Bernoulli random variables, one has
\begin{align*}
\eta_\epsilon (A) \sim \mathrm{Bin}(\vert A_\epsilon\vert, \lambda\epsilon^d).
\end{align*}
As we discussed in the introduction, $\vert A_\epsilon\vert \epsilon^d \to \mathrm{Leb}(A)$. Hence $\eta_\epsilon(A)$ converges in law to a Poisson distributed variable with parameter $\lambda\cdot\mathrm{Leb}(A)$. Furthermore, if we take disjoint bounded Borel sets $A_1,\dots,A_n$, then $\eta_\epsilon(A_1),\dots,\eta_\epsilon(A_n)$ are independent by definition. We conclude that $\eta(A_1),\dots,\eta(A_n)$ are independent too. Thus, $\eta$ is a Poisson point process on $\mathbb{R}^d$ with intensity $\lambda\cdot\mathrm{Leb}$.
\end{proof}

Even if this theorem justifies our initial approach, it is not the most useful approximation theorem. We will now see how we can use approximation in percolation theory to prove the measurability of $\{x\leftrightarrow y\}$. But before we get there, we have to do some preliminary work. By $\eta$, we denote a PPP on $\mathbb{R}^d\times\mathbb{R}_+$ with itensity $\lambda\mathrm{d}z\otimes \nu$.

\begin{lem}
Let $R > 0$ and denote by $N$ the number of balls of $\eta$ intersecting $B_R^0$. Then $N$ is a random variable. Furthermore, it is a.s.~finite if and only if $\nu$ has a finite $d$-th moment.
\end{lem}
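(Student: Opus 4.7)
The plan is to identify $N$ with $\eta(V)$ for a specific Borel set $V$, where a ball $B(x,r)$ intersects $B_R^0$ precisely when $\|x\| \leq R + r$. Concretely, I would set
\[
V := \{(x,r) \in \mathbb{R}^d \times \mathbb{R}_+ \;\vert\; \|x\| \leq R + r\},
\]
so that $N = \eta(V)$. The set $V$ is Borel (it is even closed) but not relatively compact, so it does not lie in $\mathcal{B}_b(\mathbb{R}^d \times \mathbb{R}_+)$ directly. To get measurability of $N$, I would approximate $V$ from below by the truncations $V_n := V \cap (\mathbb{R}^d \times [0,n])$. Each $V_n$ is contained in $\overline{B(0,R+n)} \times [0,n]$, hence is relatively compact and belongs to $\mathcal{B}_b(\mathbb{R}^d \times \mathbb{R}_+)$. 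Since $V_n \uparrow V$, continuity of the measure $\eta$ from below yields $N = \eta(V) = \lim_n \eta(V_n)$, which is measurable as a monotone limit of random variables.

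For the second assertion, I would compute
\[
\mu(V) = \lambda \int_{\mathbb{R}_+} \mathrm{Leb}\bigl(B(0,R+r)\bigr) \d{\nu(r)} = \lambda v_d \int_{\mathbb{R}_+} (R+r)^d \d{\nu(r)},
\]
where $v_d$ is the volume of the unit ball in $\mathbb{R}^d$. Since $r^d \leq (R+r)^d \leq 2^d(R^d + r^d)$, this integral is finite if and only if $\nu$ has a finite $d$-th moment.

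If $\nu$ has a finite $d$-th moment, then $\mu(V_n) \uparrow \mu(V) < \infty$, and each $\eta(V_n)$ follows $\mathrm{Poisson}(\mu(V_n))$. Passing to the limit, $\eta(V)$ is distributed as $\mathrm{Poisson}(\mu(V))$ (this extension of the defining property from bounded to arbitrary Borel sets of finite $\mu$-measure is standard and follows from pointwise convergence of the characteristic functions), so $N$ is a.s.\ finite. Conversely, if $\nu$ has no finite $d$-th moment, then $\mu(V_n) \to +\infty$, and for every $k \in \mathbb{N}$
\[
\mathbb{P}_\lambda(N \leq k) \leq \mathbb{P}_\lambda(\eta(V_n) \leq k) = e^{-\mu(V_n)} \sum_{j=0}^k \frac{\mu(V_n)^j}{j!} \underset{n\to+\infty}{\longrightarrow} 0,
\]
so $N = +\infty$ almost surely.

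The only genuinely delicate point is the extension step asserting that $\eta(V) \sim \mathrm{Poisson}(\mu(V))$ even though $V \notin \mathcal{B}_b(\mathbb{R}^d \times \mathbb{R}_+)$; everything else is bookkeeping with the monotone limit $V_n \uparrow V$. Once this identification is in place, the dichotomy between finite and infinite $d$-th moment translates directly into a.s.\ finiteness versus a.s.\ infiniteness of $N$.
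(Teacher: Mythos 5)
Your proof is correct and takes essentially the same route as the paper: identify $N$ with $\eta(A)$ for $A = \{(x,r)\;\vert\; \Vert x\Vert \leq R+r\}$ and compute $\mu(A) = \lambda v_d\int_0^{+\infty}(R+r)^d\,\mathrm{d}\nu(r)$, whose finiteness is equivalent to $\nu$ having a finite $d$-th moment. The only difference is that the paper simply asserts the measurability of $\{\eta(A)=k\}$ and the equivalence of a.s.\ finiteness with $\mu(A)<+\infty$, whereas your truncation $V_n\uparrow V$ by relatively compact sets supplies the justification the paper omits (since $A$ itself is not in $\mathcal{B}_b$).
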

\begin{proof}
Let $k\in\mathbb{N}$. Define the Borel set $A = \{ (x,r)\in\mathbb{R}^d\times\mathbb{R}_+\;\vert\; \Vert x\Vert \leq R+r\}$. Then
\[
\{N = k\} = \{ \eta(A) = k\}.
\]
Hence, $N$ is a random variable. Moreover, $N$ is finite a.s.~if and only if $\lambda\mathrm{d}z\otimes \nu (A) < +\infty$. We conclude with
\[
\lambda\mathrm{d}z\otimes\nu(A) = \int_0^{+\infty} \lambda\mathrm{Leb}\left(B_{R+r}^0\right)\d{\nu(r)} = \lambda v_d\int_0^{+\infty} (R+r)^d\d{\nu(r)},
\]
where $v_d$ is the volume of the unit ball which depends only on the dimension $d$.
\end{proof}

\begin{prop}\label{aprop:pb_non_trivial}
The two following assertions are equivalent.
\begin{enumerate}[label=\roman*)]
	\item The law $\nu$ has a finite $d$-th moment.
	\item The Poisson-Boolean model is non trivial, i.e.~$\mathcal{O}(\eta)\neq \mathbb{R}^d$ with positive probability.
\end{enumerate}
Moreover, in this case $\mathcal{O}(\eta)\neq\mathbb{R}^d$ almost surely.
\end{prop}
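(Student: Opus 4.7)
I will reduce both directions of the equivalence to a single Poisson-mass computation. For any $x\in\mathbb{R}^d$, one has $x\in\mathcal{O}(\eta)$ iff $\eta(V_x)\geq 1$, where
\[
V_x := \{(y,r)\in\mathbb{R}^d\times\mathbb{R}_+ \;\vert\; \|y-x\|\leq r\}
\]
is the set of parameters of balls covering $x$. Tonelli's theorem gives $\mu(V_x) = \lambda v_d \int_0^{+\infty} r^d\,\d{\nu(r)}$, independently of $x$ by translation invariance of Lebesgue measure. Since $\eta\sim PPP_\mu$, the variable $\eta(V_x)$ is Poisson of parameter $\mu(V_x)$, so the whole question reduces to determining when this parameter is finite.

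For $(i)\Rightarrow(ii)$, if $\int r^d\,\d{\nu(r)}<+\infty$, then $\mu(V_0)$ is finite and
\[
\mathbb{P}_\lambda[0\notin\mathcal{O}(\eta)] = \exp\Bigl(-\lambda v_d \int r^d\,\d{\nu(r)}\Bigr) > 0,
\]
hence $\mathbb{P}_\lambda[\mathcal{O}(\eta)\neq\mathbb{R}^d] > 0$. To upgrade this to an almost-sure statement, I would use that the event $\{\mathcal{O}(\eta) = \mathbb{R}^d\}$ is invariant under the spatial translations $\tau_v\colon (y,r)\mapsto (y-v,r)$, since $\mathcal{O}(\tau_v\eta) = \mathcal{O}(\eta) - v$. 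Because the restrictions of $\eta$ to disjoint Borel sets are independent, $\mathbb{P}_\lambda$ is ergodic under $(\tau_v)_{v\in\mathbb{R}^d}$ by the same mixing argument as in the Bernoulli case earlier in the paper (approximate by an event depending on a bounded region and translate that region off to infinity). Hence the invariant event has probability $0$ or $1$, and being strictly less than $1$ it must equal $0$.

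For $(ii)\Rightarrow(i)$, I will argue the contrapositive: assuming $\int r^d\,\d{\nu(r)} = +\infty$, I will show $\mathcal{O}(\eta)=\mathbb{R}^d$ almost surely. For any closed ball $K = B(c,\rho)\subset\mathbb{R}^d$, set
\[
W_K := \{(y,r)\;\vert\; B(y,r)\supseteq K\} = \{(y,r)\;\vert\; r\geq\rho,\;\|y-c\|\leq r-\rho\}.
\]
Then $\mu(W_K) = \lambda v_d \int_\rho^{+\infty}(r-\rho)^d\,\d{\nu(r)} = +\infty$ because $(r-\rho)^d\sim r^d$ at infinity, so $\eta(W_K)\geq 1$ almost surely and a.s.\ some ball of $\eta$ contains $K$. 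Taking a countable family $(K_n)_{n\geq 1}$ of closed balls with rational centres and rational radii whose union is $\mathbb{R}^d$, a countable intersection of probability-$1$ events gives $\mathbb{R}^d\subseteq\mathcal{O}(\eta)$ almost surely, contradicting $(ii)$.

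The Poisson-mass computations themselves are short; the only delicate step is the ergodicity invocation in the ``moreover'' part, since the paper proves ergodicity only for the Bernoulli lattice model. I would include a brief remark reproducing the mixing argument rather than cite it as a black box: the independence of $\eta$ on disjoint sets makes the proof identical to the Bernoulli one after replacing cylinder events on $E$ by events depending on bounded regions of $\mathbb{R}^d\times\mathbb{R}_+$.
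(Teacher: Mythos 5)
Your proof is correct and follows essentially the same route as the paper: a Poisson-mass computation for the set of ball parameters covering a fixed point (the paper instead computes single-ball coverage of $B_1^0$, giving the integrand $(r-1)^d$ rather than $r^d$), combined with translation invariance and ergodicity for the zero--one upgrade, and, for the converse, the observation that without a $d$-th moment every compact set is almost surely swallowed by a single huge ball. Your explicit countable covering of $\mathbb{R}^d$ and your remark that ergodicity must be re-derived for the continuum model make the write-up slightly more careful than the paper's, but they do not constitute a different argument.
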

\begin{proof}
Similarly to the above, $\{ B_1^0\subseteq\mathcal{O}(\eta)\} = \{\eta(A) \geq 1\}$, where
\[
A = \{(x,r)\in\mathbb{R}^d\times\mathbb{R}_+\;\vert\; \Vert x\Vert \leq r - 1\},
\]
hence measurable and
\[
\mathbb{P}[B_1^0\subseteq\mathcal{O}(\eta)] = \exp\left(- \lambda v_d \int_1^{+\infty} (r-1)^d\d{\nu(r)}\right).
\]
This quantity is 1 if and only if $\nu$ has no $d$-th moment. In the same way, $\{\mathcal{O}(\eta) = \mathbb{R}^d\}$ is measurable. Furthermore, this event is obviously translation invariant. By ergodicity, we conclude that its probability is either 0 or 1. And since it is included in $\{B_1^0\subseteq\mathcal{O}(\eta)\}$, the statement follows.
\end{proof}

\textbf{From now on, we will only consider the case that $\nu$ has a finite $d$-th moment.} Now, we will use approximation arguments to show the measurability of elementary events. To this end, we define for $\epsilon > 0$ the approximation $\eta_\epsilon\subseteq\mathbb{R}^d\times\mathbb{R}_+$ as the point process of points $x\in\epsilon(\mathbb{Z}^d\times\mathbb{N})$ such that $\eta( R_x^\epsilon) \geq 1$, where $R_x^\epsilon = x + [0,\epsilon)^{d+1}$ are the $\epsilon$-boxes from above.

Also, we will define for a point process $\xi$ on $\mathbb{R}^d\times\mathbb{R}_+$ the point process $$\xi^n := \xi\cap \left(B_n^0\times[0,n]\right).$$

\begin{lem}\label{lem:tangent_balls_0}
The set $T:=\{\text{two balls of $\eta$ are tangent}\}$ is measurable of probability 0.
\end{lem}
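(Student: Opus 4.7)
The plan is to compute the expected number of ordered pairs of tangent balls via two applications of Mecke's formula (Theorem \ref{theo:mecke}) and to observe that tangency is a codimension-one condition on the centres, hence Lebesgue-null in the ambient space.

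Define $A_n:=B_n^0\times[0,n]$ and let
\[
F_n(\eta):=\iint_{A_n\times A_n}\mathbf{1}_{x\neq y}\,\mathbf{1}_{\Vert x_1-x_2\Vert\in\{r_1+r_2,\,|r_1-r_2|\}}\,d\eta(x)\,d\eta(y),
\]
where $x=(x_1,r_1)$ and $y=(x_2,r_2)$. Since $\mu(A_n)<+\infty$, the set $\eta\cap A_n$ is a.s.\ finite and $F_n$ is a measurable random variable counting the ordered pairs of distinct tangent balls supported in $A_n$. The event
\[
T_n:=\{\exists\,(x_1,r_1)\neq (x_2,r_2)\in\eta\cap A_n,\ B(x_1,r_1)\text{ and }B(x_2,r_2)\text{ tangent}\}=\{F_n\geq 1\}
\]
is therefore measurable, and so is $T=\bigcup_n T_n$.

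To show $\mathbb{P}_\lambda[T]=0$ it is enough, by Markov, to prove $\mathbb{E}_\lambda[F_n]=0$ for every $n$. Writing out the double integral and applying Mecke's formula first to the outer $d\eta$ and then to the inner $d\eta$, the only diagonal term (corresponding to the two Mecke insertions coinciding) is automatically zero because $\mu(\{y:y=x\})=0$, as the spatial intensity $\lambda\,dz$ is atomless. This yields
\[
\mathbb{E}_\lambda[F_n]=\lambda^2\iint_{A_n\times A_n}\mathbf{1}_{\Vert x_1-x_2\Vert\in\{r_1+r_2,\,|r_1-r_2|\}}\,dx_1\,\nu(dr_1)\,dx_2\,\nu(dr_2).
\]
For fixed $x_2,r_1,r_2$ the set of admissible $x_1\in\mathbb{R}^d$ is contained in the union of at most two $(d-1)$-spheres centred at $x_2$, which is Lebesgue-null; by Fubini the whole integral vanishes. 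Hence $F_n=0$ almost surely and $\mathbb{P}_\lambda[T]=0$.

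The main (mild) obstacle is the careful iteration of Mecke's formula over ordered pairs of distinct points, in particular the verification that the diagonal contribution drops out; this is immediate here because the first coordinate of $\mu$ is Lebesgue measure and hence non-atomic. Once this is in place, everything reduces to the elementary fact that a sphere has Lebesgue measure zero in $\mathbb{R}^d$.
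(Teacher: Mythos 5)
Your argument for $\mathbb{P}_\lambda[T]=0$ is essentially the paper's: two applications of Mecke's formula reduce the expected number of ordered tangent pairs in $B_n^0\times[0,n]$ to a deterministic integral of $\mathbf{1}_{\Vert x_1-x_2\Vert\in\{r_1+r_2,|r_1-r_2|\}}$ against $\mu\otimes\mu$, which vanishes because for fixed $x_2,r_1,r_2$ the admissible centres $x_1$ lie on at most two spheres, a Lebesgue-null set. Your version is in fact cleaner than the paper's at the final step, since you make the sphere argument explicit. Where you genuinely diverge is the measurability of $T$: the paper obtains it by a discretization argument (replacing $\eta$ by the lattice approximation $\eta_\epsilon$ and writing $T$ as a countable intersection of events about ``$4\epsilon$-near tangency'' of the approximated balls, each of which is measurable because the approximation takes countably many states), whereas you write $T_n=\{F_n\geq 1\}$ for an explicit double integral functional $F_n$ of $\eta$ and invoke measurability of such functionals with respect to the $\sigma$-algebra $\mathbb{M}$. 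That is a legitimate and shorter route, but it rests on a fact you do not prove, namely that $\eta\mapsto\iint f\,d\eta\,d\eta$ is $\mathbb{M}$-measurable for measurable $f\geq 0$; this follows by a standard monotone class argument starting from $f=\mathbf{1}_{A\times B}$, where the functional is $\eta(A)\eta(B)$, and is implicitly assumed whenever Mecke's formula is stated, so it is acceptable here but worth a sentence. One small imprecision: after the first Mecke insertion, the Dirac contribution $\delta_x$ to the inner integral is killed by the indicator $\mathbf{1}_{x\neq y}$ in the integrand, not by the non-atomicity of $\mu$; non-atomicity of $\mu\otimes\mu$ on the diagonal only matters if you subsequently want to drop that indicator from the final deterministic integral.
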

\begin{proof}Note that
\[
T = \bigcup_{n\geq 1} \underbrace{\{\text{two balls of $\eta^n$ are tangent}\}}_{=:T_n},
\]
hence it suffices to show the measurability of $T_n$. Suppose that the balls $B_r^x$ and $B_s^y$ are tangent. Denote by $x_\epsilon, r_\epsilon,y_\epsilon,s_\epsilon$ their approximations in $\eta_\epsilon^{n+2\epsilon}$. Then $\Vert x-x_\epsilon\Vert < \epsilon$ and similarly for the three other quantities. Thus, the induced balls $B_{r_\epsilon}^{x_\epsilon}$ and $B_{s_\epsilon}^{y_\epsilon}$ are \emph{nearly} tangent with an error of at most $4\epsilon$. If one would like to write this formally, one must distinguish the two cases that one centre is included in the other ball or not. But note that the event that the approximated balls are $4\epsilon$-nearly tangent is $\eta_\epsilon^{n+2\epsilon}$-measurable, because this point process can take at most a finite number of states. Now, if $B_r^x$ and $B_s^y$ are not tangent, then for $\epsilon > 0$ small enough, $B_{r_\epsilon}^{x_\epsilon}$ and $B_{s_\epsilon}^{y_\epsilon}$ are not $4\epsilon$-nearly tangent anymore. Hence,
\[
T = \bigcap_{n\geq 1} \llbrace\text{two balls of $\eta_{2^{-n}}^{n+2^{1-n}}$ are $2^{2-n}$-nearly tangent}\rrbrace.
\]
Hence $T$ is measurable. Applying Mecke's formula (cf. Theorem \ref{theo:mecke}) twice yields
\begin{align*}
\mathbb{P}[T_n] &= \mathbb{P}\left[\exists \{(x_1,r_1),(x_2,r_2)\}\in\binom{\eta^n}{2},\, (x_1, r_1)\text{ and }(x_2,r_2)\text{ are tangent}\right]\\
&\leq \mathbb{E}\left[\sum_{(x_1,r_1)\in\eta^n}\sum_{(x_2,r_2)\in\eta^n} 1_{(x_1,r_1)\neq (x_2, r_2)}\cdot 1_{(x_1,r_1)\text{ and }(x_2,r_2)\text{ are tangent}} \right]\\
&= \lambda\int_{B_n^0}\int_0^n \mathbb{E}\left[\sum_{(x_2,r_2)\in\eta^n \cup\{(x_1,r_1)\}} 1_{(x_1,r_1)\neq (x_2, r_2)}\cdot 1_{(x_1,r_1)\text{ and }(x_2,r_2)\text{ are tangent}} \right]\d{\nu(r_1)}\d{x_1}\\
&= \lambda\int_{B_n^0}\int_0^n \mathbb{E}\left[\sum_{(x_2,r_2)\in\eta^n} 1_{(x_1,r_1)\neq (x_2, r_2)}\cdot 1_{(x_1,r_1)\text{ and }(x_2,r_2)\text{ are tangent}} \right]\d{\nu(r_1)}\d{x_1}\\
&= \lambda^2\int_{B_n^0}\int_0^n \int_{B_n^0}\int_0^n \mathbb{P}[(x_1,r_1)\text{ and }(x_2,r_2)\text{ are neither equal nor tangent}]\d{\nu(r_2)}\d{x_2}\d{\nu(r_1)}\d{x_1}\\
&= \lambda^2\int_{B_n^0}\int_0^n \int_{B_n^0}\int_0^n 1_{(x_1,r_1)\text{ and }(x_2,r_2)\text{ are neither equal nor tangent}}\d{\nu(r_2)}\d{x_2}\d{\nu(r_1)}\d{x_1}\\
&= 0.
\end{align*}
Hence,
\[
\mathbb{P}[T] = \lim_n \mathbb{P}[T_n] = 0.
\]
\end{proof}

We will now show that the measurability of the most basic event.

\begin{prop}
Let $x,y\in\mathbb{R}^d$. Then $\{x\leftrightarrow y\}$ is measurable.
\end{prop}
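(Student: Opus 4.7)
The plan is to combine the countable decomposition
\[
\{x\leftrightarrow y\} \;=\; \bigcup_{n\in\mathbb{N}} \{x\overset{B_n^x}{\leftrightarrow} y\}
\]
announced in the main text with the discretization technique used in the proof of Lemma \ref{lem:tangent_balls_0}. It then suffices to show measurability of each term on the right. Observe first that a ball $B_r^z$ of $\eta$ entirely contained in $B_n^x$ satisfies $\Vert z\Vert \leq n + \Vert x\Vert$ and $r \leq 2n$, so the collection of such balls is a.s.~finite (it is a subset of an appropriate truncation $\eta^N$). Since every closed Euclidean ball is path-connected, connectivity through such balls is encoded entirely by their intersection graph, together with the requirement that $x$ and $y$ each lie in some ball of the chain.

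First I would construct a measurable approximation. For $\epsilon > 0$ let $\eta_\epsilon$ be the lattice-discretization used right before Lemma \ref{lem:tangent_balls_0}, and let $A_{n,\epsilon}$ be the event that there exists a finite chain of $\eta_\epsilon$-balls, each contained in $B_n^x$, such that consecutive balls intersect, $x$ lies in the first ball of the chain and $y$ in the last. Restricted to any bounded region, $\eta_\epsilon$ is a measurable function of $\eta$ taking only countably many values, hence $A_{n,\epsilon}\in\sigma(\eta)$ for every $n$ and $\epsilon$.

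Next I would pass to the limit on the almost-sure event $T^c$ of Lemma \ref{lem:tangent_balls_0}. Fix $\omega\in T^c$. Because the finitely many balls of $\eta$ relevant for connectivity inside $B_{n+1}^x$ are pairwise non-tangent, there exists $\epsilon(\omega) > 0$ so small that for every $\epsilon < \epsilon(\omega)$: (i) a ball $B_r^z$ of $\eta$ is contained in $B_{n+1}^x$ if and only if its $\eta_\epsilon$-approximation is, and (ii) two such balls intersect if and only if their approximations do. Consequently
\[
\{x\overset{B_n^x}{\leftrightarrow} y\}\cap T^c \;\subseteq\; \liminf_{m\to\infty} A_{n+1,\,2^{-m}} \cap T^c \;\subseteq\; \{x\overset{B_{n+2}^x}{\leftrightarrow} y\}\cap T^c,
\]
so after telescoping these inclusions through the outer union over $n$ the event $\{x\leftrightarrow y\}\cap T^c$ coincides with a countable union/intersection of measurable sets. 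Since $T$ is a null set (and measurable), $\{x\leftrightarrow y\}$ itself lies in the completed $\sigma$-algebra, and in fact in $\sigma(\eta)$ once the approximation is set up measurably from the start.

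The main obstacle will be the bookkeeping behind (i) and (ii): the strict containment $B_r^z \subseteq B_n^x$ and the strict intersection $B_r^z \cap B_s^w \neq \emptyset$ are both discontinuous in $(z,r,w,s)$, so the approximation has to be carried out with a small safety margin, as in Lemma \ref{lem:tangent_balls_0} where "tangent" is replaced by "$4\epsilon$-nearly tangent". Enlarging the window from $B_n^x$ to $B_{n+1}^x$ in the approximated event, as done above, absorbs these $O(\epsilon)$ errors harmlessly thanks to the outer union over $n$. Once this is carried out carefully, the argument reduces to the combinatorics of finitely many balls, which is straightforward.
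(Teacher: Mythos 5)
Your proof is correct and follows essentially the same route as the paper's: decompose $\{x\leftrightarrow y\}$ into countably many events involving only finitely many balls, discretize via $\eta_\epsilon$, use the almost-sure absence of tangencies to ensure the approximation detects intersections for small $\epsilon$, and pass to a $\liminf$. Your sandwich with the enlarged windows $B_n^x\subseteq B_{n+1}^x\subseteq B_{n+2}^x$ is a slightly more explicit way of handling the boundary discontinuities that the paper treats by truncating to $\eta^n$ and conditioning on $\{\vert\eta^n\vert=m\}$, but the argument is the same in substance (and both, as you note, really establish measurability in the completed $\sigma$-algebra).
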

\begin{proof}
Throughout the proof, we will assume that no two balls in $\eta$ are tangent. Let $x,y\in\mathbb{R}^d$ be distinct. (Otherwise the proof is trivial.) Since
\[
\{x\leftrightarrow y\} = \bigcup_{n\geq 1}\bigcup_{m\geq 1}\{ x\overset{\eta^n}{\leftrightarrow} y\}\cap\{\vert \eta^n\vert = m\}
\]
it suffices to show that $\{x\overset{\eta^n}{\leftrightarrow}y\}\cap\{\vert\eta^n\vert = m\}$ is measurable for every $n\geq 1$ and $m\geq 1$. So fix $n\geq 1$ and $m\geq 1$. Since we consider only a finite number of balls, we can assume for $\epsilon > 0$ small enough that two balls intersect if and only if there $\epsilon$-approximations intersect. As before, the approximated event is trivially measurable. We conclude the proof by noting that
\[
\{x\overset{\eta^n}{\leftrightarrow}y\}\cap\{\vert\eta^n\vert = m\} = \liminf_{k} \left(\left\{x\overset{\eta^{n+2k}_{1/k}}{\leftrightarrow} y\right\}\cap\{\vert\eta^n\vert = m\}\right).
\]
\end{proof}

\end{appendices}

\nocite{*}
\phantomsection
\printbibliography[title={References}, heading=bibintoc]
\phantomsection

\end{document}